\newcommand{\ts}{\textsuperscript}
\definecolor{darkblue}{rgb}{0.0,0,0.7} 
\definecolor{darkred}{rgb}{0.7,0,0} 
\def\defn#1{{\sf #1}}
\newcommand{\ZZ}{\mathbb Z}
\newcommand{\spanz}{\mathrm{span}_{\mathbb{Z}}}
\DeclareMathOperator{\Red}{Red}
\DeclareMathOperator{\idop}{id}
\DeclareMathOperator{\stab}{Stab}
\DeclareMathOperator{\Fac}{Fac}
\DeclareMathOperator{\GL}{GL}
\DeclareMathOperator{\TR}{tr}
\DeclareMathOperator{\Tr}{Tr}
\DeclareMathOperator{\Aff}{Aff}
\newtheorem{theorem}{Theorem}[section]
\newtheorem{Proposition}[theorem]{Proposition}
\newtheorem{Lemma}[theorem]{Lemma}
\theoremstyle{definition}
\newtheorem{Definition}[theorem]{Definition}
\newtheorem{remark}[theorem]{Remark}
\newtheorem{example}[theorem]{Example}
\title[On the Hurwitz action in affine Coxeter groups]{On the Hurwitz action in affine Coxeter groups}
\author[P.~Wegener]{Patrick Wegener}
\address{Patrick Wegener, TU Kaiserslautern, Germany}
\email{wegener@mathematik.uni-kl.de}
\subjclass[2010]{Primary 20F55, 05E18}
\keywords{Coxeter groups, Hurwitz action, reflection decompositions.}
\begin{document}

\begin{abstract}
We call an element of a Coxeter group a parabolic quasi-Coxeter element if it has a reduced decomposition into a product of reflections that generate a parabolic subgroup. We show that for a parabolic quasi-Coxeter element in an affine Coxeter group the Hurwitz action on its set of reduced decompositions into a product of reflections is transitive.
\end{abstract}

\maketitle

\tableofcontents

\section{Introduction}\label{sec:intro}

Let $G$ be an arbitrary group and $n$ a positive integer. There is an action of the braid group $\mathcal{B}_n$ on $n$ strands on $G^n$, where the standard generator $\sigma_i\in\mathcal{B}_n$ which exchanges the $i$\ts{th} and $(i+1)$\ts{th} strands acts as 
$$\sigma_i\cdot (g_1,\dots, g_n):=(g_1,\dots, g_{i-1}, g_i g_{i+1}g_i^{-1},g_i,g_{i+2},\dots, g_n).$$
Notice that the product of the entries stays unchanged and that all the tuples in a given orbit generate the same subgroup of $G$. This action is called the \defn{Hurwitz action} since it was first studied by Hurwitz in 1891 \cite{Hur91}, for the case $G=\mathfrak{S}_n$. 

Two elements $g,h \in G^n$ are called \defn{Hurwitz equivalent} if there is a braid $\beta \in \mathcal{B}_n$ such that $\beta(g)=h$. It has been shown by Liberman and Teicher \cite{LT} that the question of whether two elements in $G^n$ are Hurwitz equivalent or not is undecidable in general. Nevertheless there are results in some cases like generalized quaternion groups, (semi-)dihedral groups or dicyclic groups, see \cite{Hou08,Sia09}.
Certainly the Hurwitz action also plays a role in algebraic geometry, in particular in the braid monodromy of a projective curve, see for instance \cite{KT00, Bri88}.

A Coxeter group $W$ (resp. a Coxeter system $(W,S)$ with $S$ a set of simple reflections) is a generalization of a reflection group. It is natural to decompose an element $w \in W$ into a product of reflections, where the set of reflections for $(W,S)$ is given by $T=\{wsw^{-1} \mid s \in S,~w \in W\}$. We call such a decomposition a \defn{reflection decomposition} and the pair $(W,T)$ is called a \defn{dual Coxeter system} (see Section \ref{sec:DualCoxeter} for a precise definition).
The Hurwitz action can be restricted to the set of reflection decompositions of a fixed element $w$. Given a reflection decomposition $(t_1,\dots, t_k)$ of $w$, that is $t_1,\dots, t_k \in T$ and $w=t_1\cdots t_k$, the generator $\sigma_i\in\mathcal{B}_n$ then acts as
$$\sigma_i\cdot (t_1,\dots, t_k):=(t_1,\dots, t_{i-1}, t_i t_{i+1}t_i,t_i,t_{i+2},\dots, t_k).$$

The right-hand side is again a reflection decomposition of $w$. 
Further, the property that a reflection decomposition is \defn{reduced} (that is, of minimal length) is preserved by this action.
From now on, we will take Hurwitz action to mean the Hurwitz action on reflection decompositions. 

It is a well known property of the Coxeter system $(W,S)$ that two reduced decompositions of an element $w \in W$ into a product of simple reflections can be transformed into each other by succesive use of the braid relations. This property is due to Matsumoto and therefore also called \defn{Matsumoto property}. Now, considering again a reflection decomposition $(t_1, \ldots, t_k)$ of $w$, we see that the braid group generator $\sigma_i$ acts on the $i$\ts{th} and $(i+1)$\ts{th} entries by replacing $(t_i, t_{i+1})$ by $(t_i t_{i+1}t_i, t_i)$ which corresponds exactly to a \defn{dual braid relation} in the sense of Bessis \cite{Bes03}. Hence determining whether one can pass from any reduced reflection decomposition of an element to any other just by applying a sequence of dual braid relations is equivalent to determining whether the Hurwitz action on the set of reduced reflection decompositions of the element is transitive. Therefore the transitivity of the Hurwitz action should be seen as a kind of Matsumoto property for the generating set $T$.

The transitivity of the Hurwitz action on the set of reduced reflection decompositions has long been known to hold for a family of elements commonly called \defn{parabolic Coxeter elements} (note that there are several unequivalent definitions of these elements in the literature). For more on the topic we refer to \cite{BDSW14}, and the references therein, where a simple proof of the transitivity of the Hurwitz action was shown for (suitably defined) parabolic Coxeter elements in a (not necessarily finite) Coxeter group. For Coxeter elements, the transitivity was first shown by Igusa and Schiffler \cite{IS10}. The Hurwitz action in Coxeter groups has also been studied for nonreduced reflection decompositions \cite{LR16, WY} and outside of the context of parabolic Coxeter elements \cite{Voi85, Mi06}. 

In \cite{BGRW17} the authors provided a necessary and sufficient condition on an element of a finite Coxeter group to ensure the transitivity of the Hurwitz action on its set of reduced reflection decompositions. An element of a Coxeter group is called a \defn{parabolic quasi-Coxeter element} if it admits a reduced reflection decomposition which generates a parabolic subgroup. Given a finite dual Coxeter system $(W,T)$ and an element $w \in W$, it is shown in \cite[Theorem 1.1]{BGRW17} that the Hurwitz action is transitive on the set of reduced reflection decompositions of $w$, denoted $\Red_T(w)$, if and only if $w$ is a parabolic quasi-Coxeter element for $(W,T)$.

In this paper we show that the sufficient condition also applies to affine Coxeter groups. The main result is the following.

\begin{theorem}
\label{th:themaintheorem1}
  Let $(W,T)$ be a dual Coxeter system with $W$ an affine Coxeter group and 
  let~$w \in W$.
  If $w$ is a parabolic quasi-Coxeter element for $(W,T)$, then the Hurwitz action on $\Red_{T}(w)$ is transitive.
\end{theorem}

Note that the proper parabolic subgroups of an affine Coxeter group are all finite. Therefore, if $w$ is a parabolic quasi-Coxeter element for a proper parabolic subgroup, the result of Theorem \ref{th:themaintheorem1} is already covered by the corresponding result in finite Coxeter groups \cite[Theorem 1.1]{BGRW17}. Note also that reduced reflection decompositions in parabolic subgroups and in the whole group coincide, see \cite[Theorem 1.4]{BDSW14}. Hence the main goal of this paper is to prove Theorem \ref{th:themaintheorem1} for the case that $w$ is a quasi-Coxeter element. 

\bigskip
\noindent \textbf{Structure of the paper.} The strucuture of the paper is as follows. In Section \ref{sec:AffineLattice} we recall the construction of an affine Coxeter group starting from a crystallographic root system. We further state (and prove) some results about root lattices and affine Coxeter groups which will be useful later in the paper when we work with reflection decompositions in affine Coxeter groups. In Section \ref{sec:DualCoxeter} we introduce the notion of dual Coxeter systems. In Section \ref{sec:QuasiCoxHurwitz} we give the precise definition of a quasi-Coxeter element in a dual Coxeter system. We then collect some facts about reflection decompositions and in particular about the Hurwitz action on reflection decompositions of (quasi-)Coxeter elements. By definition, an element in a Coxeter group $W$ of rank $n$ is a quasi-Coxeter element if it has at least one reduced reflection decomposition of length $n$ such that the corresponding reflections generate $W$. In Section \ref{sec:QuasiCoxElInAffine} we will show that a minimal set of reflections generating an affine Coxeter group also yields a reduced reflection decomposition of a quasi-Coxeter element. In Section \ref{sec:GenerationAffine} we start by investigating minimal generating sets of affine Coxeter groups given by reflections. Given such a minimal generating set which contains two reflections in parallel affine hyperplanes, the corresponding hyperplanes must be adjacent. This is formulated as Theorem \ref{cor:ParWallLongRoot} (and later used in Section \ref{sec:ProofMainResult}). The rest of Section \ref{sec:GenerationAffine} is devoted to the proof of Theorem \ref{cor:ParWallLongRoot}. In Section \ref{sec:ProofMainResult} we finally prove the Main Theorem \ref{th:themaintheorem1}. We do this by projecting the reflection decompositions of a quasi-Coxeter element in an affine Coxeter group to the underlying finite Coxeter group. Then we investigate the Hurwitz action on these projected decompositions as well as on their fibres. We will make heavy use of the results shown before, in particular of the results from Sections \ref{sec:QuasiCoxHurwitz}, \ref{sec:QuasiCoxElInAffine} and \ref{sec:GenerationAffine}.

\bigskip
\noindent \textbf{Acknowledgments.} 
The author wishes to thank Barbara Baumeister, Thomas Gobet, Joel Lewis, Stefan Witzel and Sophiane Yahiatene for helpful discussions and comments. The author also wishes to thank two anonymous readers for helpful comments.

\section{Root lattices and affine Coxeter groups} \label{sec:AffineLattice}
Let $V$ be a euclidean vector space with positive definite symmetric bilinear form $(- \mid -)$. Let $\Phi$ be a crystallographic root system in $V$ with simple system $\Delta$ (see \cite{Hum90} for the necessary background on root systems). The set 
$$\Phi^{\vee}:=\{ \alpha^{\vee} \mid \alpha \in \Phi\},$$
where $\alpha^{\vee}:=\frac{2 \alpha}{(\alpha \mid \alpha)}$, is again a crystallographic root system in $V$ with simple system $\Delta^{\vee}:=\{ \alpha^{\vee} \mid \alpha \in \Delta \}$. The root system $\Phi^{\vee}$ is called the \defn{dual root system} and its elements are called \defn{coroots}. (Note that in \cite{Bou02} the dual root system is defined in the dual space $V^*$ since they work with a not necessarily euclidean vector space.) For a set of roots $R \subseteq \Phi$ we put $W_R:=\langle s_{\alpha} \mid \alpha \in R \rangle$. We denote the highest root of $\Phi$ with respect to $\Delta$ by $\widetilde{\alpha}$.

\begin{Definition}
For a set of vectors $\Phi\subseteq V$ we set $L(\Phi):=\spanz(\Phi)$. If $\Phi$ is a root system, then $L(\Phi)$ is a lattice, called the \defn{root lattice}.  If $\Phi$ is a crystallographic root system, then $L(\Phi)$ is an integral lattice. In the latter case we call $L(\Phi^{\vee})$ the \defn{coroot lattice}.
\end{Definition}

There is a connection between generating sets of $W_{\Phi}$ and bases of the lattices $L(\Phi)$ and $L(\Phi^{\vee})$, respectively.
\begin{theorem} \label{PropVoigt}
\cite[Theorem 1.1]{BW18}
Let $\Phi$ be a crystallographic root system, 
$\Phi' \subseteq \Phi$ be a root subsystem, $R:= \{ \beta_1 , \ldots , \beta_k \} \subseteq  \Phi'$ be a non-empty set of roots. The following statements are equivalent:
 \begin{enumerate}
  \item[(a)] The root subsystem $\Phi'$ is the smallest root subsystem of $\Phi$ containing $R$ (i.e., the intersection of all root subsystems containing $R$);
  \item[(b)] $W_{\Phi'}= W_R$;
  \item[(c)] $L(\Phi') = L(R)$ and $L((\Phi')^{\vee})= L(R^{\vee})$.
 \end{enumerate}
\end{theorem}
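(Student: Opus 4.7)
The plan is to prove the cycle of implications (a) $\Rightarrow$ (b) $\Rightarrow$ (c) $\Rightarrow$ (d) $\Rightarrow$ (a). The first three arrows are essentially soft reflection-theoretic manipulations, while the last one carries the geometric content and is where the real work lies.

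For (a) $\Rightarrow$ (b) I would check that $W_R(R)$ is itself a root subsystem: it is closed under negation via $-\beta = s_\beta(\beta)$, and closed under $s_\gamma$ for $\gamma = w(\beta) \in W_R(R)$ because $s_\gamma = w s_\beta w^{-1}$ lies in $W_R$ and $W_R$ stabilizes $W_R(R)$ by construction. Any root subsystem containing $R$ must contain $W_R$ in its Weyl group and therefore contains $W_R(R)$, so $W_R(R)$ is the smallest such. For (b) $\Rightarrow$ (c) one has $W_{\Phi'} = \langle s_\gamma : \gamma \in W_R(R)\rangle = \langle w s_\beta w^{-1} : w \in W_R,\,\beta \in R\rangle = W_R$; conversely, because the reflections of the reflection group $W_R$ coincide (up to sign of roots) with $\{s_\gamma : \gamma \in W_R(R)\}$, the hypothesis $W_{\Phi'} = W_R$ forces each $\gamma \in \Phi'$ to lie in $W_R(R)$, which gives (c) $\Rightarrow$ (b). From (b), the two equivalent reflection formulas
$$s_\beta(x) = x - (x \mid \beta^\vee)\beta = x - (x \mid \beta)\beta^\vee,$$
together with $(s_\beta \alpha)^\vee = s_\beta(\alpha^\vee)$, show that $W_R$ stabilizes both $L(R)$ and $L(R^\vee)$; combined with the trivial inclusions $R \subseteq \Phi'$ and $R^\vee \subseteq (\Phi')^\vee$ this yields (d).

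The heart of the argument is (d) $\Rightarrow$ (a). The plan is to introduce the auxiliary set
$$\Psi := \{\alpha \in \Phi : \alpha \in L(R) \text{ and } \alpha^\vee \in L(R^\vee)\}$$
and to verify, again using both reflection formulas above, that $\Psi$ is closed under $s_\beta$ for every $\beta \in \Psi$ and hence forms a root subsystem of $\Phi$. By construction $R \subseteq \Psi$, and by hypothesis $\Phi' \subseteq \Psi$. If one can show that $\Psi$ coincides with the smallest root subsystem $\Phi_0$ containing $R$, then the chain $\Phi_0 \subseteq \Phi' \subseteq \Psi = \Phi_0$ forces $\Phi' = \Phi_0$, establishing (a). The main obstacle is thus to prove that no root of $\Phi$ outside $\Phi_0$ can simultaneously lie in $L(R)$ and have coroot in $L(R^\vee)$. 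In the simply-laced types the coroot condition is automatic, since $\alpha^\vee$ is a fixed scalar multiple of $\alpha$, and the claim collapses to $\Phi \cap L(R) = \Phi_0$. In the multiply-laced types $B_n$, $C_n$, $F_4$, $G_2$ both conditions are essential---already in $B_2$ the short-root subsystem has the same root lattice as the full system and is distinguished only by its coroot lattice---and I would complete the proof by a case-by-case inspection based on the classification of irreducible crystallographic root systems, checking that the coroot lattice condition is precisely what is needed to separate the short- and long-root behaviors that the root lattice alone cannot detect.
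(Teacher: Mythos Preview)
Your cycle (a) $\Rightarrow$ (b) $\Rightarrow$ (c) $\Rightarrow$ (d) and the setup for (d) $\Rightarrow$ (a) via the auxiliary root subsystem
\[
\Psi=\{\alpha\in\Phi : \alpha\in L(R),\ \alpha^\vee\in L(R^\vee)\}
\]
are correct and match the paper's argument (which is deferred to \cite{BaW}); the soft implications are handled in essentially the same way there. The substantive difference is in how you propose to finish (d) $\Rightarrow$ (a): you reduce to the claim that no root outside the minimal subsystem $\Phi_0$ can lie in $L(R)$ with coroot in $L(R^\vee)$, and then plan a type-by-type check through $B_n,C_n,F_4,G_2$. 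The paper gives a short uniform, classification-free argument for exactly this claim. Given $\gamma\in\Phi$ with $\gamma\in L(\Phi_0)$, write $\gamma=\sum_{i=1}^m\beta_i$ with $\beta_i\in\Phi_0$ (repetitions allowed) and $m\geq 1$ minimal. If $\gamma$ is short then each $(\beta_i\mid\beta_i)\geq(\gamma\mid\gamma)$, so expanding
\[
1=\sum_i \frac{(\beta_i\mid\beta_i)}{(\gamma\mid\gamma)}+\sum_{i\neq j}\frac{(\beta_i\mid\beta_j)}{(\gamma\mid\gamma)}
\]
forces $m=1$ (otherwise the first sum is $\geq 2$, some $(\beta_i\mid\beta_j)<0$, hence $\beta_i+\beta_j\in\Phi_0$, contradicting minimality of $m$). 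If $\gamma$ is long, then $\gamma^\vee$ is short in $\Phi^\vee$, and the same argument applied to the coroot hypothesis $\gamma^\vee\in L(\Phi_0^\vee)$ gives $\gamma^\vee\in\Phi_0^\vee$, i.e.\ $\gamma\in\Phi_0$.

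So your approach is sound and would succeed, but the case analysis is unnecessary: the single ``write as a minimal sum and compare squared lengths'' trick, run once on $\Phi$ and once on $\Phi^\vee$, replaces all of it. Your $B_2$ observation is exactly why the dual pass is needed, and the paper's proof is the clean realization of that intuition.
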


\begin{Definition}
Let $\Phi$ be a crystallographic root system in $V$. The \defn{weight lattice} $P(\Phi)$ of $\Phi$ is defined by
$$P(\Phi):=\{ x \in V \mid (x \mid \alpha^{\vee}) \in \ZZ~\forall \alpha \in \Phi \}.$$
Similarly, the \defn{coweight lattice} $P(\Phi^{\vee})$ is defined by
$$P(\Phi^{\vee}):=\{ x \in V \mid (x \mid \alpha) \in \ZZ~\forall \alpha \in \Phi \}.$$
By \cite[VI, 9, Prop. 26]{Bou02} $P(\Phi)$ (resp. $P(\Phi^{\vee})$) is again a lattice containing
$L(\Phi)$ (resp. $L(\Phi^{\vee})$). 
\end{Definition}

\medskip
\noindent We summarize the definition of an affine Coxeter group and some of its elementary properties. For details and proofs we refer to \cite[Ch. 4]{Hum90}.

Throughout the rest of this section we fix a euclidean vector space with positive definite symmetric bilinear form $(- \mid -)$ and a crystallographic root system $\Phi$ in $V$. For each $\alpha \in \Phi$ and each $k \in \ZZ$, the set 
$$ H_{\alpha, k}:= \{v \in V \mid (v \mid \alpha)=k \}$$
defines an affine hyperplane. We define the \defn{affine reflection} $s_{\alpha,k}$ in $H_{\alpha, k}$ by 
$$ s_{\alpha, k}: V \rightarrow V, v \mapsto v -((v \mid \alpha) -k) \alpha^{\vee}.$$
Then $s_{\alpha, k}$ fixes $H_{\alpha, k}$ pointwise and sends $0$ to $k \alpha^{\vee}$. Moreover one has $s_{\alpha, k}= s_{- \alpha, -k}$.
\begin{Proposition} \label{prop:ConjInAffine}
\cite[Proposition 4.1]{Hum90}
Let $w \in W_{\Phi}$, $\alpha \in \Phi$ and $k \in \ZZ$. 
\begin{enumerate}
\item[(a)] $w H_{\alpha, k}= H_{w(\alpha), k}$;
\item[(b)] $w s_{\alpha, k} w^{-1} = s_{w(\alpha), k}$.
\end{enumerate}
\end{Proposition}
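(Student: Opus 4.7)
The plan is to reduce both statements to two basic facts about elements of the finite Weyl group $W_\Phi$: first, that every $w \in W_\Phi$ is an orthogonal transformation of $V$ with respect to $(-\mid -)$ (since $W_\Phi$ is generated by the orthogonal reflections $s_\alpha$); and second, that consequently $w(\alpha^\vee) = (w\alpha)^\vee$ for every $\alpha \in \Phi$, because orthogonality preserves norms and $\alpha^\vee = 2\alpha/(\alpha\mid\alpha)$. With these two facts in hand, both (a) and (b) become short direct computations.

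For (a), I would argue by a chain of equivalences: a vector $u \in V$ lies in $wH_{\alpha,k}$ if and only if $w^{-1}u \in H_{\alpha,k}$, i.e.\ $(w^{-1}u \mid \alpha) = k$. By orthogonality, $(w^{-1}u \mid \alpha) = (u \mid w\alpha)$, so this is the same as $u \in H_{w(\alpha),k}$. Note that the parameter $k$ is unaffected precisely because $w$ is linear (no translation part); if one conjugated by an affine translation $\TR(x)$ instead, $k$ would shift by $(x \mid w\alpha)$.

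For (b), I would take an arbitrary $v \in V$ and compute $(ws_{\alpha,k}w^{-1})(v)$ directly from the defining formula
\[ s_{\alpha,k}(u) = u - ((u \mid \alpha) - k)\alpha^\vee. \]
Applying this with $u = w^{-1}v$ and then applying $w$, the linear term becomes $v$, the scalar $(w^{-1}v \mid \alpha) - k$ becomes $(v \mid w\alpha) - k$ by orthogonality, and the vector $\alpha^\vee$ becomes $w(\alpha^\vee) = (w\alpha)^\vee$. This is exactly the defining formula for $s_{w(\alpha),k}$ applied to $v$.

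There is no genuine obstacle; the content of the proposition is that the finite Weyl group acts on the collection of affine hyperplanes (and affine reflections) by the evident formula, with the translation parameter $k$ unchanged. The only point worth emphasizing in the write-up is the compatibility $w(\alpha^\vee) = (w\alpha)^\vee$, since the coroot rather than the root is what appears in the formula for $s_{\alpha,k}$, and the statement would fail without it. Part (b) also implies part (a), since an affine reflection determines its mirror hyperplane as its fixed-point set, but I would still prove (a) first as a warm-up and use it only implicitly.
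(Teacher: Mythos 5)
Your proof is correct and is essentially the standard argument: the paper itself gives no proof but simply cites \cite[Proposition 4.1]{Hum90}, where the same direct computation (using that $w$ is orthogonal and hence $w(\alpha^{\vee})=(w\alpha)^{\vee}$) is carried out. Nothing to add.
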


\medskip
\noindent For each $x \in V$ we define the translation in $x$ by 
$$\TR(x): V \rightarrow V, v \mapsto v+x.$$
The set $\Tr(V):=\{ \TR(x) \mid x \in V \}$ of all translations by elements of $V$ is a group. The following result is straightforward to check. 

\begin{Lemma} \label{le:CalcAffine}
Let $\alpha \in \Phi$ and $k,\ell \in \ZZ$.

\noindent
\begin{enumerate}
\item[(a)] $s_{\alpha, k }= \TR(k \alpha^{\vee}) s_{\alpha}= s_{\alpha} \TR(-k \alpha^{\vee}).$ In particular one has $s_{\alpha, 0 }= s_{\alpha}$;
\item[(b)] $s_{\alpha} s_{\alpha, 1}= \TR(- \alpha^{\vee})$;
\item[(c)] $s_{\alpha, k} s_{\alpha, \ell}= \TR((k-\ell)\alpha^{\vee})$.
\end{enumerate}
\end{Lemma}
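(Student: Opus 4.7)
The plan is to verify each identity by direct evaluation on an arbitrary vector $v \in V$, reducing everything to the defining formulas $s_{\alpha,k}(v) = v - ((v\mid\alpha)-k)\alpha^\vee$ and $\TR(x)(v)=v+x$, together with the single geometric input $s_\alpha(\alpha^\vee) = -\alpha^\vee$. The latter holds because $\alpha^\vee = \frac{2\alpha}{(\alpha\mid\alpha)}$ is a positive scalar multiple of $\alpha$, so $s_\alpha$ acts as $-1$ on $\mathbb{R}\alpha^\vee$.

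For part (a), I would regroup the defining formula as
$$s_{\alpha,k}(v) \;=\; \bigl(v-(v\mid\alpha)\alpha^\vee\bigr) + k\alpha^\vee \;=\; s_\alpha(v) + k\alpha^\vee,$$
which is exactly $\TR(k\alpha^\vee)\circ s_\alpha$ applied to $v$. For the second equality, I would compute $s_\alpha\bigl(\TR(-k\alpha^\vee)(v)\bigr) = s_\alpha(v) - k\,s_\alpha(\alpha^\vee) = s_\alpha(v)+k\alpha^\vee$ using $s_\alpha(\alpha^\vee)=-\alpha^\vee$. Setting $k=0$ then immediately gives $s_{\alpha,0}=s_\alpha$.

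Parts (b) and (c) should follow formally from (a) by choosing the appropriate factorization. For (b), I would apply the second form of (a) to get $s_\alpha s_{\alpha,1} = s_\alpha\cdot s_\alpha\TR(-\alpha^\vee) = \TR(-\alpha^\vee)$, since $s_\alpha$ is an involution. For (c), I would pick the factorizations so that the two $s_\alpha$ factors become adjacent and cancel:
$$s_{\alpha,k}s_{\alpha,l} \;=\; \TR(k\alpha^\vee)\,s_\alpha\cdot s_\alpha\,\TR(-l\alpha^\vee) \;=\; \TR(k\alpha^\vee)\TR(-l\alpha^\vee) \;=\; \TR((k-l)\alpha^\vee),$$
using that $\Tr(V)$ is an abelian group under composition with $\TR(x)\TR(y)=\TR(x+y)$.

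There is no substantive obstacle here; the entire lemma is bookkeeping with signs, and the only nontrivial ingredient is the observation $s_\alpha(\alpha^\vee)=-\alpha^\vee$ used once in (a). Once (a) is in hand, (b) and (c) are one-line manipulations.
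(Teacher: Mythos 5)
Your proof is correct and takes essentially the same route as the paper, namely direct calculation from the defining formula, with part (a) as the workhorse; the paper writes out only part (c) and leaves (a), (b) as "direct calculations." The one cosmetic difference is that for (c) the paper passes through the conjugation identity $s_{\alpha,k}s_{\alpha}=s_{\alpha}s_{\alpha,-k}$ (Proposition \ref{prop:ConjInAffine}) before applying (a), whereas you simply use both factorizations from (a) and cancel $s_{\alpha}\cdot s_{\alpha}$, which is a perfectly valid and slightly more economical arrangement of the same computation.
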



\medskip
\noindent Let $\Aff(V)$ be the semidirect product of the general linear group $\GL(V)$ and $\Tr(V)$ which we call the \defn{affine group} of $V$.

\begin{Definition}
The group 
$$W_{a, \Phi} := \langle s_{\alpha, k} \mid \alpha \in \Phi, k \in \ZZ \rangle \leq \Aff(V)$$
is called \defn{affine Weyl group} associated to $\Phi$. We sometimes omit the subscript $\Phi$ if it is clear from the context and denote $W_{a,\Phi}$ by $W_a$.
\end{Definition}

\begin{theorem} \label{thm:StructureAffineWeylGroup}
\cite[Proposition 4.2, Theorem 4.6]{Hum90}
Let $\Phi$ be a crystallographic root system with simple system $\Delta$.
\begin{enumerate}
\item[(a)] The group $W_a=W_{a, \Phi}$ is the semidirect product of the finite Coxeter group $W_{\Phi}$ and the group 
$$\Tr(\Phi^\vee):= \{ \TR(\alpha) \mid \alpha \in L(\Phi^{\vee}) \},$$
which we identify with $L(\Phi^{\vee})$.
\item[(b)] $(W_a, S_a)$ is a Coxeter system, where
$$S_a= S_{a, \Delta}:= \{s_{\alpha} \mid \alpha \in \Delta \} \cup \{ s_{\widetilde{\alpha},1} \},$$
and the set of reflections for $(W_a, S_a)$ is given by the set of affine reflections, that is
$$
T_a := \{ s_{\alpha, k} \mid \alpha \in \Phi, ~k \in \ZZ\}.
$$
\end{enumerate}
\end{theorem}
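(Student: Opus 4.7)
The plan is to combine the semidirect product decomposition $W_a = W_\Phi \ltimes L(\Phi^\vee)$ from Theorem \ref{thm:StructureAffineWeylGroup}(a) with the corresponding transitivity result for finite Coxeter groups \cite[Theorem 1.1]{BGRW17}. The linear projection $\pi \colon W_a \twoheadrightarrow W_\Phi$ sends the affine reflection $s_{\alpha,k}$ to $s_\alpha$, so any reduced $T$-factorization of $w \in W_a$ projects to a reflection factorization of $\pi(w) \in W_\Phi$, and Hurwitz moves commute with $\pi$. The strategy is therefore to realize a Hurwitz equivalence downstairs in $W_\Phi$ and then lift it to $W_a$, using Lemma \ref{le:CalcAffine}(c) to control the translation parts.

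First I would reduce to the case of a full-rank quasi-Coxeter element by induction on the rank. This step requires a structural description of the parabolic subgroups of $W_a$: each is (up to conjugation) a direct product of a finite Coxeter group and an affine Coxeter group of strictly smaller rank. Given a parabolic quasi-Coxeter element $w$ living in a proper parabolic subgroup $W'$, one splits $w$ along this decomposition, applies \cite[Theorem 1.1]{BGRW17} to the finite factor and the inductive hypothesis to the affine factor, and then verifies that every reduced $T$-factorization of $w$ in $W_a$ already lies inside $W'$. For the remaining case one may assume $w$ is quasi-Coxeter of reflection length $n = \rk(W_a) = \rk(W_\Phi)+1$; then the reflections in any reduced factorization of $w$ generate $W_a$, and the projected factorization of $\pi(w)$ has length exactly one more than $\ell_T(\pi(w))$ in $W_\Phi$, so it contains precisely one redundant adjacent pair.

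In this full-rank situation, given two reduced factorizations $(t_1,\dots,t_n)$ and $(t_1',\dots,t_n')$ of $w$, I would first apply Hurwitz moves to shift the redundant pair to the same position in both tuples. Removing the redundant pair produces two reduced factorizations of $\pi(w)$ in $W_\Phi$ — a quasi-Coxeter element downstairs — and these are Hurwitz equivalent by the finite result \cite[Theorem 1.1]{BGRW17}. The main obstacle is the lifting step: a Hurwitz move in $W_\Phi$ determines each new reflection only up to an integer shift, so a naive lift will in general destroy the product $w$. I expect to need a bookkeeping lemma inside the coroot lattice $L(\Phi^\vee)$ showing that the integer shifts can be chosen coherently along an entire sequence of downstairs Hurwitz moves, and that two coherent lifts of the same downstairs factorization differ only by further Hurwitz moves fixing the linear projection. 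The identities $s_{\alpha,k} s_{\alpha,l} = \TR((k-l)\alpha^\vee)$ from Lemma \ref{le:CalcAffine}(c) supply exactly the flexibility in the integer parts needed to match the translation part of $w$, and this abelian-lattice analysis inside $L(\Phi^\vee)$ will form the technical heart of the argument.
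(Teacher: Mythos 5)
You have proved the wrong statement. The theorem you were asked to establish is the classical structure theorem for affine Weyl groups: part (a) asserts that $W_a$ is the semidirect product of the finite Weyl group $W_{\Phi}$ with the translation group $\Tr(\Phi^\vee)\cong L(\Phi^{\vee})$, and part (b) asserts that $(W_a,S_a)$ with $S_a=\{s_{\alpha}\mid \alpha\in\Delta\}\cup\{s_{\widetilde{\alpha},1}\}$ is a Coxeter system. Your proposal addresses neither assertion; instead it sketches an argument for the paper's main result, Theorem \ref{th:themaintheorem1} (transitivity of the Hurwitz action on $\Red_T(w)$ for parabolic quasi-Coxeter elements), and in doing so it explicitly \emph{invokes} part (a) of the very theorem under discussion as an ingredient (``the semidirect product decomposition $W_a = W_{\Phi}\ltimes L(\Phi^{\vee})$ from Theorem \ref{thm:StructureAffineWeylGroup}(a)''). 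As a proof of the present statement this is circular, and as written it is simply a proof attempt for a different theorem.

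A proof of the actual statement runs along entirely different lines and contains no Hurwitz-theoretic content. For (a) one checks that the translations $\TR(\lambda)$, $\lambda\in L(\Phi^{\vee})$, lie in $W_a$ (via $s_{\alpha}s_{\alpha,1}=\TR(-\alpha^{\vee})$, Lemma \ref{le:CalcAffine}(b)), that $W_{\Phi}$ normalizes this translation subgroup (Lemma \ref{le:CalcAffine2}(c)), that $W_{\Phi}\cap\Tr(\Phi^{\vee})=\{e\}$ because a nontrivial translation fixes no point while every element of $W_{\Phi}$ fixes the origin, and that each generator $s_{\alpha,k}=\TR(k\alpha^{\vee})s_{\alpha}$ lies in the product $\Tr(\Phi^{\vee})\cdot W_{\Phi}$. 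Part (b) requires genuinely geometric input: the action of $W_a$ on the alcoves cut out by the hyperplanes $H_{\alpha,k}$, the fact that the reflections in the walls of a fundamental alcove are exactly the elements of $S_a$ (for $\Phi$ irreducible, where the highest root $\widetilde{\alpha}$ is defined), and a verification that these generate $W_a$ and satisfy the defining conditions of a Coxeter system. This is precisely the content of \cite[Proposition 4.2, Theorem 4.6]{Hum90}, which is all the paper itself offers by way of proof. None of this machinery appears in your proposal, so the gap is not a fixable step but the entire argument.
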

Therefore the affine Weyl group $W_a$ is also called \defn{affine Coxeter group} and the pair $(W_a, S_a)$ is called \defn{affine Coxeter system}.

\medskip


Theorem \ref{thm:StructureAffineWeylGroup} provides the following normal form for elements in an affine Coxeter group.

\begin{Lemma} \label{le:McC}
For each element $w \in W_a = W_{a, \Phi}$ there is a unique factorization $w=w_0 \TR(\lambda)$ with $w_0 \in W_{\Phi}$ and $\lambda \in L(\Phi^{\vee})$.
\end{Lemma}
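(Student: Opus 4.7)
The plan is to derive this lemma as a direct consequence of Theorem \ref{thm:StructureAffineWeylGroup}(a), which asserts that $W_a$ is the (internal) semidirect product of $W_\Phi$ and $\Tr(\Phi^\vee)$, with the translation subgroup $\Tr(\Phi^\vee)$ identified with the coroot lattice $L(\Phi^\vee)$ via $\TR(\lambda) \leftrightarrow \lambda$. From general semidirect product theory, every element of $W_\Phi \ltimes \Tr(\Phi^\vee)$ admits a unique decomposition as a product of an element of $W_\Phi$ with an element of $\Tr(\Phi^\vee)$, which is exactly the claim.

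For existence I would simply quote the theorem: any $w \in W_a$ can be written as $w_0 \cdot t$ with $w_0 \in W_\Phi$ and $t \in \Tr(\Phi^\vee)$, and then $t = \TR(\lambda)$ for a unique $\lambda \in L(\Phi^\vee)$ by definition of $\Tr(\Phi^\vee)$.

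For uniqueness, suppose $w_0 \TR(\lambda) = w_0' \TR(\lambda')$ with $w_0, w_0' \in W_\Phi$ and $\lambda, \lambda' \in L(\Phi^\vee)$. Rearranging in $\Aff(V)$ gives $(w_0')^{-1} w_0 = \TR(\lambda') \TR(\lambda)^{-1} = \TR(\lambda' - \lambda)$. Evaluating both sides at $0 \in V$: the left-hand side vanishes since elements of $W_\Phi \leq \GL(V)$ are linear and hence fix $0$, whereas the right-hand side equals $\lambda' - \lambda$. Therefore $\lambda = \lambda'$, and substituting back yields $w_0 = w_0'$.

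There is no real obstacle here; the lemma is essentially a restatement of part (a) of the preceding theorem combined with the universal property of semidirect products. The only mildly delicate point is the uniqueness argument, where one must remember that $W_\Phi$ acts linearly on $V$ (fixing $0$) while nontrivial translations do not, so the intersection $W_\Phi \cap \Tr(\Phi^\vee)$ in $\Aff(V)$ is trivial.
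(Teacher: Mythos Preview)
Your proposal is correct and matches the paper's approach: the paper simply states that Theorem~\ref{thm:StructureAffineWeylGroup} provides this normal form and does not spell out any further argument, so your derivation from the semidirect product structure is exactly what is intended (with more detail than the paper itself gives).
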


\begin{Lemma} \label{le:CalcAffine2}
Let $\alpha \in \Phi$, $\lambda \in P(\Phi^{\vee})$, $w \in W_{\Phi}$ and $k \in \ZZ$. Then the following holds:

\begin{enumerate}
\item[(a)] $\TR(\lambda) H_{\alpha, k} = H_{\alpha, k+(\lambda \mid \alpha)}$;
\item[(b)] $\TR(\lambda) s_{\alpha, k} \TR(-\lambda) = s_{\alpha, k +(\lambda \mid \alpha)}$, that is, $P(\Phi^{\vee})$ normalizes $W_a$;
\item[(c)] $w \TR(\alpha^{\vee}) w^{-1}= \TR(w(\alpha)^{\vee})$.
\end{enumerate}
\end{Lemma}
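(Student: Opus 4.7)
All three assertions are elementary verifications using the explicit formula for $s_{\alpha,k}$ given just before Proposition~\ref{prop:ConjInAffine}, and my plan is simply to plug in and simplify, with (a) feeding into (b) and the ``in particular'' clause being the only piece with a tiny subtlety. For (a), observe that $v\in H_{\alpha,k}$ iff $(v\mid\alpha)=k$, and $(\TR(\lambda)v\mid\alpha)=(v+\lambda\mid\alpha)=(v\mid\alpha)+(\lambda\mid\alpha)$; thus $\TR(\lambda)$ sends $H_{\alpha,k}$ into $H_{\alpha,k+(\lambda\mid\alpha)}$, and invertibility of $\TR(\lambda)$ yields equality of hyperplanes.

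For (b) I will substitute $s_{\alpha,k}(v)=v-((v\mid\alpha)-k)\alpha^\vee$ into $\TR(\lambda)\circ s_{\alpha,k}\circ\TR(-\lambda)$ and simplify: replacing $v$ by $v-\lambda$ produces an extra $-(\lambda\mid\alpha)$ in the inner product, so the coefficient of $\alpha^\vee$ becomes $(v\mid\alpha)-(\lambda\mid\alpha)-k$, and adding $\lambda$ back at the end leaves $v-\bigl((v\mid\alpha)-(k+(\lambda\mid\alpha))\bigr)\alpha^\vee$, which is exactly $s_{\alpha,k+(\lambda\mid\alpha)}(v)$. A more conceptual alternative uses (a): the conjugate is an affine isometry whose linear part is $s_\alpha$ (since $\TR(\lambda)$ has trivial linear part) and which fixes $\TR(\lambda)H_{\alpha,k}=H_{\alpha,k+(\lambda\mid\alpha)}$ pointwise, and there is a unique such affine reflection. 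For the ``in particular'' clause I then apply (b) to the generators $s_{\alpha,k}$ of $W_a$: by the very definition of the coweight lattice, $\lambda\in P(\Phi^\vee)$ forces $(\lambda\mid\alpha)\in\ZZ$ for every $\alpha\in\Phi$, so $k+(\lambda\mid\alpha)\in\ZZ$ and $s_{\alpha,k+(\lambda\mid\alpha)}\in W_a$; hence conjugation by $\TR(\lambda)$ preserves the generating set, and therefore $W_a$ itself.

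For (c), interpreting $g^w=wgw^{-1}$ in keeping with Proposition~\ref{prop:ConjInAffine}(b), I exploit that $w\in W_\Phi\leq\GL(V)$ is linear to compute
$w\TR(\alpha^\vee)w^{-1}(v)=w\bigl(w^{-1}(v)+\alpha^\vee\bigr)=v+w(\alpha^\vee)$,
which is precisely $\TR(w(\alpha^\vee))(v)$. Since $w$ is an orthogonal transformation, $(w(\alpha)\mid w(\alpha))=(\alpha\mid\alpha)$, and therefore
$w(\alpha^\vee)=\tfrac{2\,w(\alpha)}{(\alpha\mid\alpha)}=\tfrac{2\,w(\alpha)}{(w(\alpha)\mid w(\alpha))}=w(\alpha)^\vee$,
as claimed. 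There is no genuine obstacle here; the only points requiring attention are keeping the signs straight in the short calculation for (b) and fixing the convention for $g^w$ in (c), neither of which poses any real difficulty.
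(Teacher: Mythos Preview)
Your proof is correct. For parts (a) and (b) the paper simply cites \cite[Prop.~4.1]{Hum90}, which amounts to the same direct verification you carry out, so there is no real difference.

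For part (c) your route differs from the paper's. The paper reduces to the case $w=s_\beta$ (using that reflections generate $W_\Phi$), then rewrites $\TR(\alpha^\vee)=s_{\alpha,1}s_\alpha$ via Lemma~\ref{le:CalcAffine} and applies Proposition~\ref{prop:ConjInAffine}(b) to each factor:
\[
s_\beta\,\TR(\alpha^\vee)\,s_\beta = s_\beta s_{\alpha,1} s_\alpha s_\beta = s_{s_\beta(\alpha),1}\, s_{s_\beta(\alpha)} = \TR(s_\beta(\alpha)^\vee).
\]
Your argument instead handles arbitrary $w\in W_\Phi$ in one stroke by exploiting linearity of $w$ and the identity $w(\alpha^\vee)=w(\alpha)^\vee$ (immediate from orthogonality). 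This is more elementary and avoids both the reduction to generators and the earlier lemmas; the paper's version, on the other hand, keeps everything phrased in terms of the affine reflections already introduced. Both are short and valid.
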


\begin{proof}
For parts $(a)$ and $(b)$ see \cite[Prop. 4.1]{Hum90}. For part $(c)$ it is sufficient to consider $w=s_{\beta}$ for some $\beta \in \Phi$. By Proposition \ref{prop:ConjInAffine} and Lemma \ref{le:CalcAffine} we obtain 
$$s_{\beta} \TR(\alpha^{\vee}) s_{\beta} = s_{\beta} s_{\alpha, 1} s_{\alpha} s_{\beta} = s_{s_{\beta}(\alpha),1}  s_{s_{\beta}(\alpha)} = \TR(s_{\beta}(\alpha)^{\vee}).$$
\end{proof}

\begin{Lemma} \label{le:ActionOnCoweight}
The group $W_{\Phi}$ acts on $P(\Phi^{\vee})$. 
\end{Lemma}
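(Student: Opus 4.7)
The plan is to unpack the definition of $P(\Phi^{\vee})$ and use two standard facts about the finite Weyl group $W_{\Phi}$: (1) it acts orthogonally on $V$ (i.e.\ preserves the bilinear form $(-\mid-)$), and (2) it permutes the roots, so $w(\Phi)=\Phi$ for every $w\in W_{\Phi}$.

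Concretely, fix $w\in W_{\Phi}$ and $x\in P(\Phi^{\vee})$; I want to show $w(x)\in P(\Phi^{\vee})$. For an arbitrary $\alpha\in\Phi$, orthogonality gives
\[
(w(x)\mid\alpha)=(x\mid w^{-1}(\alpha)).
\]
Since $w^{-1}(\alpha)\in\Phi$ by the stability of $\Phi$ under $W_{\Phi}$, the right-hand side lies in $\ZZ$ by the defining property of $P(\Phi^{\vee})$. Hence $w(x)\in P(\Phi^{\vee})$, showing $W_{\Phi}$ preserves the coweight lattice, and since $W_{\Phi}$ already acts on $V$ this is an action on $P(\Phi^{\vee})$.

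There is essentially no obstacle here; the only thing to be careful about is not to confuse the two directions of duality. One might worry whether one needs $\alpha^{\vee}$ instead of $\alpha$ in the pairing, but this is immaterial because $\Phi^{\vee}$ is itself a crystallographic root system stable under $W_{\Phi}=W_{\Phi^{\vee}}$, and the defining condition of $P(\Phi^{\vee})$ is pairing against the roots of $\Phi$ (not $\Phi^{\vee}$). So the argument above really is just: orthogonality plus $w(\Phi)=\Phi$.
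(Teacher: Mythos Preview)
Your proof is correct and is essentially identical to the paper's own argument: both use that $W_{\Phi}\leq O(V)$ to rewrite $(w(\lambda)\mid\alpha)=(\lambda\mid w^{-1}(\alpha))$ and then invoke $w^{-1}(\alpha)\in\Phi$ to conclude integrality.
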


\begin{proof}
Let $\lambda \in P(\Phi^{\vee})$, thus $(\lambda \mid \alpha) \in \ZZ$ for all $\alpha \in \Phi$. Since $W_{\Phi}$ is a subgroup of the orthogonal group $O(V)$, we obtain
$$
(w(\lambda) \mid \alpha) = (\lambda \mid w^{-1}(\alpha)) \in \ZZ ~\text{for all }\alpha \in \Phi,
$$
because $w^{-1}(\alpha) \in \Phi$. Hence $w(\lambda) \in P(\Phi^{\vee})$. 
\end{proof}

\medskip
\noindent Lemma \ref{le:McC} provides a normal form for each element in an affine Coxeter group. Given a reflection decomposition of an element in $W_a$, the following lemma tells us how this normal form is achieved.

\begin{Lemma} \label{le:AffineFac}
For $\beta_i \in \Phi$ and $k_i \in \mathbb{Z}$ $(1 \leq i \leq m)$ we have 
\begin{align*}
s_{\beta_1, k_1} \cdots s_{\beta_m, k_m}  = 
s_{\beta_1} \cdots s_{\beta_m}
\TR(\sum_{i=1}^m -k_i s_{\beta_m} \cdots s_{\beta_{i+1}}(\beta_i)^{\vee}).
\end{align*}
\end{Lemma}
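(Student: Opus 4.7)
The plan is to proceed by induction on $m$, using Lemma \ref{le:CalcAffine}(a) to strip one affine reflection at a time and Lemma \ref{le:CalcAffine2}(c) to move translations past linear reflections.

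The base case $m=1$ is immediate from Lemma \ref{le:CalcAffine}(a), which gives $s_{\beta_1, k_1} = s_{\beta_1}\TR(-k_1 \beta_1^{\vee})$, agreeing with the claimed formula (the product $s_{\beta_1}\cdots s_{\beta_2}$ being empty for $m=1$).

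For the inductive step, assume the formula for $m-1$ and write
$$s_{\beta_1,k_1}\cdots s_{\beta_{m-1},k_{m-1}} = s_{\beta_1}\cdots s_{\beta_{m-1}}\,\TR(\mu),\qquad \mu:=\sum_{i=1}^{m-1}-k_i\, s_{\beta_{m-1}}\cdots s_{\beta_{i+1}}(\beta_i)^{\vee}.$$
Multiplying on the right by $s_{\beta_m,k_m}=s_{\beta_m}\TR(-k_m\beta_m^{\vee})$ and commuting $\TR(\mu)$ past $s_{\beta_m}$ via the identity $\TR(\mu)\, s_{\beta_m} = s_{\beta_m}\,\TR(s_{\beta_m}(\mu))$ (which follows from Lemma \ref{le:CalcAffine2}(c) applied term by term, or directly from the semidirect product structure in Theorem \ref{thm:StructureAffineWeylGroup}(a)), one obtains
$$s_{\beta_1,k_1}\cdots s_{\beta_m,k_m} = s_{\beta_1}\cdots s_{\beta_m}\,\TR\bigl(s_{\beta_m}(\mu)-k_m\beta_m^{\vee}\bigr).$$

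The remaining step is to recognize the translation vector. Since $W_\Phi$ acts orthogonally, for any $w\in W_\Phi$ and any $\alpha\in\Phi$ one has $w(\alpha^{\vee})=w(\alpha)^{\vee}$, because $w$ preserves $(-\mid -)$. Applying this inside $\mu$ yields
$$s_{\beta_m}(\mu)=\sum_{i=1}^{m-1}-k_i\, \bigl(s_{\beta_m}s_{\beta_{m-1}}\cdots s_{\beta_{i+1}}(\beta_i)\bigr)^{\vee},$$
and the extra summand $-k_m\beta_m^{\vee}$ is precisely the $i=m$ term of the target sum (the product $s_{\beta_m}\cdots s_{\beta_{m+1}}$ being empty). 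This matches the formula, completing the induction.

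The only mild subtlety is keeping track of the identification $w(\alpha)^{\vee}=w(\alpha^{\vee})$ and making sure the indices in the interior reflection product shift correctly when passing from $m-1$ to $m$; no genuine obstacle arises, as the calculation is a clean composition of the bookkeeping lemmas already established.
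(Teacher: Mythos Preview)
Your proof is correct and follows essentially the same inductive strategy as the paper's. The only difference is organizational: the paper applies the induction hypothesis to the last $m-1$ factors and then moves $s_{\beta_1,k_1}$ past $s_{\beta_2}\cdots s_{\beta_m}$ using the conjugation relation $w s_{\alpha,k} w^{-1}=s_{w(\alpha),k}$, whereas you apply it to the first $m-1$ factors and commute a single translation past $s_{\beta_m}$; both routes amount to the same bookkeeping.
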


\begin{proof}
By Lemma \ref{le:CalcAffine} we have for $\alpha \in \Phi, k \in \ZZ$ that
\begin{align}\label{equ:AffRef}
s_{\alpha,k}=\TR(k\alpha^{\vee})s_{\alpha}=s_{\alpha} \TR(-k \alpha^{\vee}) 
\end{align} 
and by Proposition \ref{prop:ConjInAffine} that
\begin{align} \label{equ:ActionRef}
w s_{\alpha, k} w^{-1} &= s_{w(\alpha), k}~ \text{for all } w \in W_{\Phi}.
\end{align}
We show the assertion by induction on $m$. It is clear for $m=1$. By induction it follows
\begin{align*}
s_{\beta_1, k_1} \cdots s_{\beta_m, k_m} = 
s_{\beta_1,k_1} s_{\beta_2} \cdots s_{\beta_m}
\TR(\sum_{i=2}^m -k_i s_{\beta_m} \cdots s_{\beta_{i+1}}(\beta_i)^{\vee}).
\end{align*}
Put $w:=s_{\beta_m} \cdots s_{\beta_2}$. Then 
\begin{align*}
s_{\beta_1,k_1} s_{\beta_2} \cdots s_{\beta_m}  \stackrel{\phantom{(\ref{equ:ActionRef})}}{=} & s_{\beta_1, k_1} w^{-1}\\
{} \stackrel{(\ref{equ:ActionRef})}{=} & w^{-1} s_{w(\beta_1), k_1}\\
{} \stackrel{(\ref{equ:AffRef})}{=} & w^{-1} s_{w(\beta_1)} \TR(-k_1 w(\beta_1)^{\vee})\\
{} \stackrel{\phantom{(\ref{equ:ActionRef})}}{=} & s_{\beta_1} w^{-1} \TR(-k_1 w(\beta_1)^{\vee})\\
{} \stackrel{\phantom{(\ref{equ:ActionRef})}}{=} & s_{\beta_1}s_{\beta_2} \cdots s_{\beta_m} \TR(-k_1 s_{\beta_m} \cdots s_{\beta_2}(\beta_1)^{\vee}),
\end{align*}
which yields the assertion.
\end{proof}

\begin{Lemma} \label{le:AffConj}
For $\alpha, \beta \in \Phi$ and $k,\ell \in \ZZ$ we have 
$$s_{\alpha, k} s_{\beta, \ell} s_{\alpha, k} = s_{s_{\alpha}(\beta), \ell- k \frac{2(\alpha \mid \beta)}{(\alpha \mid \alpha)} }.$$
\end{Lemma}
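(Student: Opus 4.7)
The plan is to peel off the translation part of each outer factor using Lemma~\ref{le:CalcAffine}(a), reducing the assertion to two conjugation identities that are already in hand. I would write the left copy of $s_{\alpha,k}$ as $\TR(k\alpha^\vee)s_\alpha$ and the right copy as $s_\alpha\TR(-k\alpha^\vee)$, so that
\[
s_{\alpha,k}\,s_{\beta,l}\,s_{\alpha,k} \;=\; \TR(k\alpha^\vee)\,\bigl(s_\alpha\,s_{\beta,l}\,s_\alpha\bigr)\,\TR(-k\alpha^\vee).
\]

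The bracketed middle factor is the conjugation of an affine reflection by a linear one, so Proposition~\ref{prop:ConjInAffine}(b) (with $w=s_\alpha$) rewrites it as $s_{s_\alpha(\beta),\,l}$. The remaining outer pair is then a conjugation of an affine reflection by a translation, for which Lemma~\ref{le:CalcAffine2}(b) yields
\[
\TR(k\alpha^\vee)\,s_{s_\alpha(\beta),\,l}\,\TR(-k\alpha^\vee) \;=\; s_{s_\alpha(\beta),\,l+(k\alpha^\vee\,\mid\,s_\alpha(\beta))}.
\]

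It remains to simplify the new constant $(k\alpha^\vee\mid s_\alpha(\beta))$. Expanding $s_\alpha(\beta)=\beta-\tfrac{2(\alpha\mid\beta)}{(\alpha\mid\alpha)}\alpha$ and using $\alpha^\vee=\tfrac{2\alpha}{(\alpha\mid\alpha)}$, a one-line bilinearity computation gives $(\alpha\mid s_\alpha(\beta))=-(\alpha\mid\beta)$, hence $(k\alpha^\vee\mid s_\alpha(\beta))=-k\,\tfrac{2(\alpha\mid\beta)}{(\alpha\mid\alpha)}$, which matches the stated subscript. I do not anticipate any real obstacle; the only decision is which form of $s_{\alpha,k}$ to place on each side so that the translations float to the outside and the two preceding conjugation lemmas close the calculation in one line each. (A more mechanical alternative would be to apply Lemma~\ref{le:AffineFac} to the triple product directly, but the route above is shorter and makes the role of the reflection/translation decomposition more transparent.)
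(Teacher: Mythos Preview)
Your argument is correct. It is, however, a genuinely different route from the paper's own proof. The paper applies Lemma~\ref{le:AffineFac} directly to the triple $s_{\alpha,k}s_{\beta,l}s_{\alpha,k}$ to obtain
\[
s_{s_\alpha(\beta)}\,\TR\bigl(-k\,s_\alpha s_\beta(\alpha)^\vee - l\,s_\alpha(\beta)^\vee - k\,\alpha^\vee\bigr),
\]
and then spends the bulk of the proof on the identity $s_\alpha s_\beta(\alpha)^\vee + \alpha^\vee = -\tfrac{2(\alpha\mid\beta)}{(\alpha\mid\alpha)}\,s_\alpha(\beta)^\vee$ to collapse the translation vector. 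Your approach instead factors the outer reflections asymmetrically via Lemma~\ref{le:CalcAffine}(a), so that Proposition~\ref{prop:ConjInAffine}(b) and Lemma~\ref{le:CalcAffine2}(b) each apply once; the only residual computation is the one-line observation $(\alpha\mid s_\alpha(\beta)) = -(\alpha\mid\beta)$. This is shorter and conceptually cleaner, since it separates the linear conjugation from the translation conjugation rather than mixing them through the normal form. The paper's route has the minor advantage of illustrating Lemma~\ref{le:AffineFac} in a simple case, but your version is the more transparent proof of this particular statement. (You even anticipate the paper's method in your parenthetical remark.)
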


\begin{proof}
By Lemma \ref{le:AffineFac} we have 
$$s_{\alpha, k} s_{\beta, \ell} s_{\alpha, k} = s_{s_{\alpha}(\beta)} \TR(-k s_{\alpha}s_{\beta}(\alpha)^{\vee} - \ell s_{\alpha}(\beta)^{\vee} -k \alpha^{\vee}).$$
One has
\begin{align*}
s_{\alpha}s_{\beta}(\alpha)^{\vee} + \alpha^{\vee} & = 
\frac{2 s_{\alpha}s_{\beta}(\alpha)}{(\alpha \mid \alpha)} + \frac{2 \alpha}{(\alpha \mid \alpha)}\\
& = - \frac{2}{(\alpha \mid \alpha)} \cdot \frac{2(\alpha \mid \beta)}{(\beta \mid \beta)} s_{\alpha}(\beta)\\
& = - \frac{2(\alpha \mid \beta)}{(\alpha \mid \alpha)} s_{\alpha}(\beta)^{\vee},
\end{align*}
which yields the assertion.
\end{proof}

\medskip


\section{Dual Coxeter systems} \label{sec:DualCoxeter}

Let $(W,T)$ be a pair consisting of a group $W$ and a generating subset $T$ of $W$. In the sense of \cite{Bes03} we call $(W,T)$ a \defn{dual Coxeter system} of finite rank $n$ if there is a subset $S \subseteq T$ with $|S| = n$ such that 
$(W,S)$ is a Coxeter system, and $T = \big\{ wsw^{-1} \mid w \in W, s \in S \big\}$ 
is the set of reflections for the Coxeter system $(W,S)$.
We then call~$(W,S)$ a \defn{simple system} for $(W,T)$. If $S'\subseteq T$ is such that $(W,S')$ is a Coxeter system, then $\big\{ wsw^{-1} \mid w \in W, s \in S' \big\}=T$ (see \cite[Lemma 3.7]{BMMN02}). Hence a set $S'\subseteq T$ is a simple system for $(W,T)$ if and only if $(W,S')$ is a Coxeter system. The \defn{rank} of $(W,T)$ is defined as $|S|$ for a simple system $S \subseteq T$. This is well-defined by \cite[Theorem 3.8]{BMMN02}.

Let $W'$ be a reflection subgroup of $W$. Then $(W', W' \cap T)$ is again a dual Coxeter system by \cite{Dye90}. The reflection subgroup generated by $\{s_1,\ldots,s_m\} \subseteq T$ is called a \defn{parabolic subgroup} for $(W,T)$ if there is a simple system~$S = \{ s_1,\ldots,s_n \}$ for $(W,T)$
with $m \leq n$. This definition differs from the usual notion of a parabolic subgroup generated by a conjugate of a subset of a fixed simple system $S$ (see \cite[Section 1.10]{Hum90}) which we call a \defn{classical parabolic subgroup} for $(W,S)$. Obviously  a classical parabolic subgroup is a parabolic subgroup as defined here. But the two definitions are not equivalent in general (see \cite[Example 2.2]{Gob}). By \cite[Proposition 4.6]{BGRW17} the notions of parabolic subgroup and classical parabolic subgroup coincide in finite and affine Coxeter groups.

A dual Coxeter system $(W,T)$ is called \defn{finite} (resp. \defn{affine}) if $(W,S)$ is finite (resp. affine) for one (equivalently each) simple system $S \subseteq T$. This is well-defined by \cite{BMMN02, FHM06}. A dual Coxeter system $(W,T)$ is called \defn{irreducible} if $(W,S)$ is irreducible for each simple system $S \subseteq T$.

\section{Quasi-Coxeter elements, Hurwitz action and the absolute order} \label{sec:QuasiCoxHurwitz}

In this section we give the definition of a quasi-Coxeter element in a dual Coxeter system and collect some facts about reflection decompositions, in particular about the Hurwitz action on reflection decompositions of (quasi-)Coxeter elements.

\begin{Definition} \label{def:AbsoluteOrder}
Let $(W,T)$ be a dual Coxeter system. The partial order on $W$ defined by
$$u\leq_T v\text{ if and only if }\ell_T(u)+\ell_T(u^{-1}v)=\ell_T(v)$$
for $u,v \in W$ is called \defn{absolute order}.
\end{Definition}

In other words, we have $u\leq_T v$ if and only if there exists an element in $\Red_T(u)$ which is a prefix of an element in $\Red_T(v)$. To determine whether a reflection decomposition is reduced, Carter gave a geometric criterion for finite Coxeter groups.

\begin{Lemma} \label{lem:Carter}
\cite[Lemma 3]{Carter} Let $\Phi$ be a root system and $\alpha_1, \ldots, \alpha_k \in \Phi$. Then $s_{\alpha_1} \cdots s_{\alpha_k}$ is a reduced reflection decomposition if and only if $\alpha_1, \ldots, \alpha_k$ are linearly independent. 
\end{Lemma}

\begin{Lemma} \label{le:EachRefBelow}
Let $(W,T)$ be a finite dual Coxeter system of rank $n$ and $w \in W$ with $\ell_T(w)=n$. Then $t \leq_T w$ for all $t \in T$.
\end{Lemma}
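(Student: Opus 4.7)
The plan is to use two standard properties of the absolute length $\ell_T$ on a finite Coxeter group to pin down $\ell_T(tw)$ exactly.

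First I would recall that for a finite real reflection group $W$ acting faithfully on an $n$-dimensional reflection representation $V$, Carter's lemma gives $\ell_T(u) = n - \dim V^u$ for every $u \in W$. In particular $\ell_T(u) \leq n$ for all $u \in W$. This is the upper bound that will make the argument work, and it is where the hypothesis that $(W,T)$ is finite of rank $n$ enters.

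Next I would establish that multiplying by a reflection changes $\ell_T$ by exactly $\pm 1$. The inequality $|\ell_T(tw) - \ell_T(w)| \leq 1$ is immediate from subadditivity of $\ell_T$ (using $\ell_T(t)=1$ and the fact that $t$ is an involution). To rule out equality, I would use the parity observation $\det(u) = (-1)^{\ell_T(u)}$, which holds because every reflection has determinant $-1$ and a $T$-reduced factorization realizes the length. Since $\det(tw) = -\det(w)$, the integers $\ell_T(tw)$ and $\ell_T(w)$ have opposite parity, so $\ell_T(tw) \in \{\ell_T(w) - 1, \, \ell_T(w) + 1\}$.

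Combining the two pieces: given $\ell_T(w) = n$ and any $t \in T$, we get $\ell_T(tw) \in \{n-1,\, n+1\}$, and the upper bound $\ell_T(tw) \leq n$ forces $\ell_T(tw) = n-1$. Since $t^{-1} = t$, this reads $\ell_T(t) + \ell_T(t^{-1} w) = 1 + (n-1) = n = \ell_T(w)$, which is exactly the definition of $t \leq_T w$.

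There is no real obstacle here; the only subtlety worth flagging is that both inputs (Carter's length formula and the determinant/parity argument) depend on $W$ being a finite real reflection group of rank $n$, so the statement truly uses finiteness and cannot be transplanted verbatim to the affine setting, where $\ell_T$ is no longer bounded by the rank.
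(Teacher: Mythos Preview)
Your proof is correct and follows essentially the same approach as the paper. The paper simply cites \cite[Lemma~3]{Carter} to obtain $\ell_T(tw)=n-1$ directly, whereas you spell out the two ingredients behind that citation (the upper bound $\ell_T\leq n$ and the parity constraint) explicitly; the concluding step, that $\ell_T(tw)=n-1$ gives a reduced factorization of $w$ beginning with $t$ and hence $t\leq_T w$, is identical.
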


\begin{proof}
By Lemma \ref{lem:Carter} we have $\ell_T(tw)=n-1$. Hence there exists a reduced reflection decomposition for $w$ which begins with $t$. 
\end{proof}

\begin{Definition}\label{def:coxeter} 
Let $(W, T)$ be a dual Coxeter system of rank $n$.
\begin{enumerate}

\item[(a)] An element $c\in W$ is called a \defn{Coxeter element} if there exists a simple system $S=\{s_1,\dots, s_n\}$ for $(W,T)$ such that $c=s_1 \cdots s_n$. An element $w\in W$ is called a \defn{parabolic Coxeter element} if there exists a simple system $S=\{s_1,\dots, s_n\}$ for $(W,T)$ such that $w=s_1\cdots s_m$ for some $m\leq n$. The element~$w$ is moreover called a \defn{standard parabolic Coxeter element} for the Coxeter system $(W,S)$.
\item[(b)] An element $w \in W$ is called a \defn{quasi-Coxeter element} for $(W,T)$ if there exists a reduced reflection decomposition $(t_1,\dots ,t_n)\in\Red_T(w)$ such that $W= \langle t_1 , \ldots , t_n \rangle$. An element $w \in W$ is called a \defn{parabolic quasi-Coxeter element} for $(W,T)$ if there is a simple system $S= \{ s_1 , \ldots , s_n\}$ and $(t_1 , \dots , t_m)\in\Red_T(w)$ such that $\langle t_1 , \ldots , t_m \rangle = \langle s_1 , \ldots , s_m \rangle$ for some $m \leq n$.
\end{enumerate}
\end{Definition}

Let us first point out that quasi-Coxeter elements in affine Coxeter groups indeed extend the class of Coxeter elements. In Example \ref{ex:CoxNotQuasiCox} we will provide an example of a quasi-Coxeter element in an affine Coxeter group which is not a Coxeter element. 

Furthermore we want to note that the given definition of a quasi-Coxeter element appears naturally in finite (dual) Coxeter systems. Here the rank of the Coxeter system is a natural bound for the reflection length. In affine Coxeter systems we still have bounded reflection length as shown in \cite{McCP11}. But this bound exceeds the rank of the affine Coxeter system. Therefore the definition of a quasi-Coxeter element might be extended to longer reduced reflection decompositions (and even in such a way that Theorem \ref{th:themaintheorem1} is still valid for this extended definition). We will adress this topic at the end of the paper in Remark \ref{rem:end} and Example \ref{ex:Thomas}.

\medskip
\noindent Let us collect some results about the Hurwitz action and quasi-Coxeter elements.

\begin{theorem}
\label{th:maintheorem1}
  \cite[Theorem 1.3]{BDSW14} Let $(W,T)$ be a dual Coxeter system of rank~$n$ and 
  let~$w = s_1\cdots s_m$ be a parabolic Coxeter element in~$W$.
  The Hurwitz action on $\Red_T(w)$ is transitive, that is, for each $(t_1,\ldots,t_m) \in \Red_T(w)$ there is a braid $\sigma \in \mathcal{B}_m$ such that
  $$\sigma (t_1,\ldots,t_m) = (s_1,\ldots,s_m).$$
\end{theorem}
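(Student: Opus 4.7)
The plan is an induction on $m = \ell_T(w)$. The base case $m \leq 1$ is immediate. For the inductive step, fix a simple system $S = \{s_1, \ldots, s_n\}$ of $(W,T)$ realising $w = s_1 \cdots s_m$ as a parabolic Coxeter element and an arbitrary $(t_1, \ldots, t_m) \in \Red_T(w)$; the goal is to produce a braid taking this tuple to $(s_1, \ldots, s_m)$.

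The main ingredient I would extract is a \emph{prefix lemma}: for every reflection $t \in T$ with $t \leq_T w$, the Hurwitz orbit of $(s_1, \ldots, s_m)$ contains a tuple of the form $(t, u_2, \ldots, u_m)$. Granting this, I would apply the lemma with $t := t_1$ to obtain $(t_1, u_2, \ldots, u_m) \sim (s_1, \ldots, s_m)$. The tails $(u_2, \ldots, u_m)$ and $(t_2, \ldots, t_m)$ are then both reduced $T$-factorisations of $w' := t_1 w$. One verifies that $w'$ is itself a parabolic Coxeter element of rank $m-1$ for a simple system obtained from $S$ by removing or exchanging one reflection: this uses the standard fact that every proper left divisor in the absolute order of a Coxeter element of a Coxeter subgroup is a parabolic Coxeter element of a smaller parabolic subgroup. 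Applying the induction hypothesis inside the rank-$(m-1)$ parabolic subgroup yields $(u_2, \ldots, u_m) \sim (t_2, \ldots, t_m)$, and stacking braids gives $(t_1, \ldots, t_m) \sim (s_1, \ldots, s_m)$.

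The prefix lemma itself I would prove by showing that
\[
P := \{ t \in T \mid (s_1, \ldots, s_m) \sim (t, *, \ldots, *) \text{ for some choice of tail} \}
\]
equals $\{ t \in T \mid t \leq_T w \}$. Containment $\{s_1, \ldots, s_m\} \subseteq P$ is immediate by cyclically rotating with $\sigma_1 \cdots \sigma_{i-1}$. The crucial stability property $s P s^{-1} \subseteq P$ for $s \in P$ follows from a direct Hurwitz manipulation on the first two entries of a tuple with leading entry $s$. Combined with the classical fact that in a (parabolic) Coxeter group every reflection is conjugate to a simple reflection via a sequence of its own simple reflections, this yields $P \supseteq \{ t \in T \mid t \leq_T w \}$; the reverse inclusion is automatic because the Hurwitz orbit stays inside $\Red_T(w)$.

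The hardest part will be verifying that $w' = t_1 w$ really is a parabolic Coxeter element for some simple system of rank $m-1$ sitting inside $(W,T)$, rather than merely a quasi-Coxeter element of the reflection subgroup $\langle t_2, \ldots, t_m \rangle$. This is where the specific combinatorial structure of parabolic Coxeter elements (the interplay between absolute-order divisors and standard parabolic Coxeter elements of smaller rank) must be invoked, going beyond what is available for arbitrary (quasi-)Coxeter elements.
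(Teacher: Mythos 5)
First, a point of reference: the paper does not prove this statement at all --- Theorem~\ref{th:maintheorem1} is quoted verbatim from \cite[Theorem 1.3]{BDSW14}, so your argument can only be measured against the proof in that reference. Your induction-on-$m$ skeleton (bring $t_1$ to the front of the standard tuple, peel it off, induct on the quotient $t_1w$) is the standard shape of such proofs, and the easy ingredients (the base case, the cyclic rotations showing $\{s_1,\dots,s_m\}\subseteq P$) are fine. But both load-bearing steps have genuine gaps. The first is the stability claim $sPs^{-1}\subseteq P$ for $s\in P$. A Hurwitz move on the first two entries of a tuple $(s,u_2,\dots,u_m)$ in the orbit only yields $su_2s\in P$ for the \emph{particular} reflection $u_2$ that happens to occupy the second slot (and, via $\sigma_1^{-1}$, that $u_2\in P$); it does not yield $sts\in P$ for an \emph{arbitrary} $t\in P$, because $s$ and $t$ need not occur together in any single tuple of the orbit, and there is no a priori way to bring them adjacent. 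Proving that the set of entries occurring in a single Hurwitz orbit is closed under conjugation by the subgroup it generates is essentially the whole content of the theorem; this is precisely the point at which \cite{BDSW14} has to work with the reflection representation and root systems rather than with formal braid manipulations.

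The second gap is the reduction step: you need $w'=t_1w$ to be again a parabolic Coxeter element \emph{in the sense of Definition~\ref{def:coxeter}}, i.e.\ a prefix of some simple system of $(W,T)$, in order to invoke the inductive hypothesis. You flag this yourself as the hardest part, but the ``standard fact'' you invoke --- that every $\leq_T$-divisor of a (parabolic) Coxeter element is a parabolic Coxeter element --- is, in the sources where it appears (\cite[Corollary 3.6]{DG}, \cite{BGRW17}), derived \emph{from} Hurwitz transitivity or proved by machinery of comparable depth. Citing it inside an induction whose conclusion is Hurwitz transitivity risks circularity. To make the argument sound you would either have to give an independent proof that $t\leq_T w$ forces $tw$ to be a prefix of a simple system (for instance via Theorem~\ref{th:maintheorem2} together with an explicit construction of the new simple system from the old one), or restructure the induction so that the divisor statement and the transitivity statement are established simultaneously, which is in effect what \cite{BDSW14} does.
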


\begin{example}  \label{ex:CoxNotQuasiCox}
Let $(W,T)$ be an affine dual Coxeter system of type $\widetilde{D}_n$, that is, $W=W_{a,\Phi}$ with $\Phi$ a root system of type $D_n$. Consider the Coxeter element $c:= s_{\alpha_1} \cdots s_{\alpha_{n}}s_{\widetilde{\alpha},1}$, where $\alpha_1, \ldots, \alpha_n$ are simple roots for $\Phi$ and $\widetilde{\alpha}$ is the corresponding highest root. The element $c':= s_{\alpha_1} \cdots s_{\alpha_{n}}$ is a Coxeter element for $W_{\Phi}$. Using Theorem \ref{th:maintheorem1} and the fact that $t \leq_T c'$ for all $t \in T$, we find $\beta_1, \ldots , \beta_{n-1} \in \Phi$ and a braid $\sigma \in \mathcal{B}_n$ such that
$$
\sigma(s_{\alpha_1}, \ldots, s_{\alpha_{n}})=(s_{\beta_1}, \ldots, s_{\beta_{n-1}}, s_{\widetilde{\alpha}}).
$$
Consider the natural epimorphism $p: W_{a,\Phi} \rightarrow W_{\Phi}, ~s_{\alpha, k} \mapsto s_{\alpha}$. Then one has
$$p(c) = s_{\beta_1} \cdots s_{\beta_{n-1}} \leq_T c'.$$
Thus $p(c)$ is a parabolic Coxeter element by \cite[Corollary 3.6]{DG}. For all $x \in W_{a, \Phi}$ we have $p(xcx^{-1})= p(x) p(c) p(x)^{-1}$. Since all Coxeter elements in $W_{a,\Phi}$ are conjugate (note that the $\widetilde{D}_n$-diagram is a tree), we see that the projection of an arbitrary Coxeter element in $W_{a, \Phi}$ under $p$ is always a parabolic Coxeter element in $W_{\Phi}$. If we substitute the Coxeter element $c'=s_{\alpha_1} \cdots s_{\alpha_{n}}=s_{\beta_1} \cdots s_{\beta_{n-1}} s_{\widetilde{\alpha}}$ by an arbitrary quasi-Coxeter element $w'$ in $W_{\Phi}$, then $w' s_{\widetilde{\alpha},1} \in W_{a, \Phi}$ is quasi-Coxeter and $p(w' s_{\widetilde{\alpha},1}) \in W_{\Phi}$ is a parabolic quasi-Coxeter element by \cite[Corollary 6.11]{BGRW17}. In \cite[Example 2.4]{BGRW17} we provided an example of a parabolic quasi-Coxeter element in type $D_4$ which is not a parabolic Coxeter element. This shows that $p(w' s_{\widetilde{\alpha},1})$ needs not to be a parabolic Coxeter element in $W_{\Phi}$.
\end{example}

\begin{theorem} \label{thm:RedParabolic}
\label{th:maintheorem2} \cite[Theorem 1.4]{BDSW14} Let $(W,T)$ be a dual Coxeter system and let $W'$ be a parabolic subgroup of $W$. 
Then for any $w\in W'$, 
  $$\Red_T(w) = \Red_{T'}(w),$$
  where $T' = W' \cap T$ is the set of reflections in $W'$.
\end{theorem}

\begin{theorem}
\label{th:maintheorem3}
\cite[Theorem 1.1]{BGRW17} Let $(W,T)$ be a finite dual Coxeter system and let~$w \in W$. The Hurwitz action on $\Red_T(w)$ is transitive if and only if $w$ is a parabolic quasi-Coxeter element for $(W,T)$.
\end{theorem}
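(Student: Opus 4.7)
The plan is to prove the two directions of the biconditional separately, with the substantive work concentrated in ``parabolic quasi-Coxeter $\Rightarrow$ Hurwitz transitive'' and a more formal, obstruction-style argument for the converse.

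For the direction ``Hurwitz transitive $\Rightarrow$ parabolic quasi-Coxeter'': I observe first that the Hurwitz generator $\sigma_i$ replaces $(t_i,t_{i+1})$ by $(t_it_{i+1}t_i, t_i)$ and therefore preserves the reflection subgroup $\langle t_1,\ldots,t_m\rangle$ generated by the tuple. Under the transitivity hypothesis this subgroup is an invariant of $w$; call it $W'$. I would then argue that $W'$ must be a parabolic subgroup of $(W,T)$, because in that case the fixed factorization $(t_1,\ldots,t_m)$ itself witnesses $w$ as parabolic quasi-Coxeter (parabolic agrees with classical parabolic in finite Coxeter groups, cf.\ Proposition~4.6 of \cite{BGRW17} cited in the excerpt). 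The mechanism is to show that if $W'$ were not a parabolic subgroup, then one could locate a reflection $t\in T$ with $t\leq_T w$ but $t\notin W'$; such a $t$ extends to a reduced factorization of $w$ generating a subgroup strictly larger than $W'$, contradicting the Hurwitz invariance of the generated subgroup. Ruling out such an extraneous $t$ is controlled root-theoretically by Theorem \ref{PropVoigt}, which identifies ``$\Phi'$ is the smallest root subsystem containing a given set'' with the lattice equality $L(\Phi') = L(\{\alpha_1,\ldots,\alpha_m\})$ that is concretely checkable inside the ambient root lattice.

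For the direction ``parabolic quasi-Coxeter $\Rightarrow$ Hurwitz transitive'': let $(t_1,\ldots,t_m)\in\Red_T(w)$ be a reduced factorization generating a parabolic subgroup $W'$. Since $w\in W'$, Theorem \ref{th:maintheorem2} yields $\Red_T(w) = \Red_{W'\cap T}(w)$, so the problem reduces to the dual Coxeter system $(W',W'\cap T)$ of rank $m$, in which $w$ is now a \emph{quasi-Coxeter} element in the stronger sense (a single factorization generates the ambient group). The remaining task is: for every finite dual Coxeter system $(W,T)$ of rank $n$ and every quasi-Coxeter $w\in W$, the Hurwitz action on $\Red_T(w)$ is transitive. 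I would prove this by induction on $n$, following a descent strategy: given two reduced factorizations of $w$, Hurwitz-transport them to factorizations sharing a common initial reflection $t$, then strip off $t$ and apply the induction hypothesis to the length-$(n-1)$ tails, which are reduced factorizations of $tw$ inside a rank-$(n-1)$ reflection subgroup that inherits a parabolic structure adequate to carry induction.

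The combinatorial crux is therefore the \emph{initial reflection exchange}: given reflections $t,t'\in T$ each occurring as an initial entry of some reduced factorization of $w$, exhibit a Hurwitz chain connecting a factorization that begins with $t$ to one that begins with $t'$. For Coxeter elements this is furnished by Theorem \ref{th:maintheorem1}; for genuinely non-Coxeter quasi-Coxeter classes, which by Carter's classification of conjugacy classes via admissible diagrams exist only in the irreducible finite types $D_n$, $E_{6,7,8}$, $F_4$, $H_3$, $H_4$, one must proceed type-by-type. This is the principal obstacle. The exceptional types are finite and amenable in principle to direct (even computer-assisted) verification, but type $D_n$ supports an infinite family of non-Coxeter quasi-Coxeter classes and requires a uniform-in-$n$ treatment, typically via the signed-permutation model. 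Throughout, Theorem \ref{PropVoigt} remains the central root-theoretic tool, converting the subgroup-generation questions produced by Hurwitz moves into $\ZZ$-lattice equalities one can compute with explicitly.
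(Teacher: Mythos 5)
This statement is not proved in the paper you were given: Theorem \ref{th:maintheorem3} is imported verbatim from \cite[Theorem 1.1]{BGRW17} and used as a black box, so there is no internal argument to compare yours against. Judged on its own terms, your proposal correctly reproduces the architecture of the proof in \cite{BGRW17} (Hurwitz invariance of the generated subgroup for one direction; reduction to quasi-Coxeter elements via Theorem \ref{th:maintheorem2} and induction on rank for the other), but it is an outline whose load-bearing steps are asserted rather than proved, so it does not constitute a proof.

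Concretely, two gaps. First, in the direction ``transitive $\Rightarrow$ parabolic quasi-Coxeter'', the step ``if $W'$ is not parabolic one can locate $t \leq_T w$ with $t \notin W'$'' is the entire content, and Theorem \ref{PropVoigt} does not deliver it: that theorem compares root subsystems and lattices, whereas what you need is Carter's lemma, which shows that every reduced factorization of $w$ consists of reflections whose roots are linearly independent and span $(V^w)^{\perp}$, hence that $\langle t_1,\ldots,t_m\rangle$ is always contained in the parabolic closure $P(w)$ and that $\{t \in T \mid t \leq_T w\}$ is exactly the set of reflections of $P(w)$; only then does transitivity force $W' = P(w)$, which is parabolic. (Also, the subgroup generated by a factorization beginning with such a $t$ need not be \emph{strictly larger} than $W'$, only different; that is what contradicts transitivity.) Second, in the direction ``parabolic quasi-Coxeter $\Rightarrow$ transitive'', your induction needs the fact that stripping one reflection from a generating reduced factorization of a quasi-Coxeter element leaves a tuple generating a \emph{parabolic} subgroup of rank $n-1$ --- this is precisely \cite[Corollary 6.10]{BGRW17}, restated here as Theorem \ref{prop:CharParSub}, and without it Theorem \ref{th:maintheorem2} cannot be applied to the tails --- and it needs the ``initial reflection exchange'', which you rightly identify as the principal obstacle but then defer entirely; in \cite{BGRW17} that exchange is the bulk of the work (a lattice-theoretic argument uniform in $n$ for type $D_n$, computer verification for the exceptional types). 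So the skeleton matches the cited source, but the proof is missing exactly where the difficulty lies.
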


In general, Theorem \ref{th:maintheorem3} does not apply to affine (or other infinite) Coxeter groups. Hubery and Krause gave an example \cite[Example 5.7]{HK13} of an element in an affine Coxeter group such that the Hurwitz action is transitive on its set of reduced reflection decompositions, but $w$ is not a parabolic Coxeter element.

For finite Coxeter groups it has been shown by Lewis and Reiner that reflection decompositions can be reduced by using the Hurwitz action. 
\begin{theorem} \label{thm:LR2}
\cite[Corollary 1.4]{LR16} Let $(W,T)$ be a finite dual Coxeter system and $w \in W$ with $\ell_T(w)=n$. Then every decomposition of $w$ into $m$ reflections
lies in the Hurwitz orbit of some  $(t_1, \ldots , t_m)$ such that
\begin{align*}
t_1  =  t_2,~ t_3  =  t_4, \ldots, ~t_{m-n-1} = t_{m-n},
\end{align*}
and $(t_{m-n+1}, \ldots , t_m) \in \Red_T(w)$.
\end{theorem}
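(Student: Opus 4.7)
The plan is to argue by induction on the defect $d := m - n$, which is a non-negative even integer since each reflection has determinant $-1$. If $d=0$, the tuple $(t_1,\ldots,t_m)$ is already $T$-reduced and the claimed form holds vacuously (no pairs, the whole tuple is the reduced suffix). For $d>0$, the crux is the following \emph{reduction step}:

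\smallskip
\noindent \textbf{Key reduction.} There exists $s \in T$ and reflections $u_3,\ldots,u_m$ with $u_3 \cdots u_m = w$ such that
$(t_1,\ldots,t_m) \sim (s, s, u_3, \ldots, u_m).$
\smallskip

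Granting this, the tuple $(u_3,\ldots,u_m)$ has length $m-2 \geq n$ and factors $w$; the inductive hypothesis puts it in the desired form, and prepending the pair $(s,s)$ completes the induction.

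To prove the key reduction, I would track the $T$-length along partial products. By Carter's lemma, the sequence $\ell_T(e), \ell_T(t_1), \ell_T(t_1 t_2), \ldots, \ell_T(t_1 \cdots t_m) = n$ changes by $\pm 1$ at each step. Because it begins at $0$, ends at $n$, and has $m>n$ steps, it must decrease somewhere; let $i$ be the smallest index at which a decrease occurs. Then $(t_1, \ldots, t_{i-1}) \in \Red_T(v)$ where $v := t_1 \cdots t_{i-1}$ has $T$-length $i-1$, and $\ell_T(vt_i)=i-2$, so $t_i \leq_T v$. The reflection subgroup $W' := \langle t_1,\ldots,t_{i-1}\rangle$ has rank $i-1$ (the roots of the $t_j$ must be linearly independent, else $v$ could not attain $T$-length $i-1$), and by Proposition~4.6 of \cite{BGRW17} (used earlier in the paper) $W'$ is in fact a (classical) parabolic subgroup. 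Then Theorem~\ref{th:maintheorem2} gives $\Red_T(v) = \Red_{W'\cap T}(v)$, and Lemma~\ref{le:EachRefBelow} applied inside $(W', W'\cap T)$ — where $v$ has maximal $T$-length $i-1$ equal to the rank of $W'$ — ensures that some reduced factorization of $v$ begins with $t_i$.

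Assuming Hurwitz transitivity on $\Red_{W'\cap T}(v)$ (see below), one can then link $(t_1,\ldots,t_{i-1})$ to a reduced factorization of $v$ starting with $t_i$, so the full tuple becomes $(t_i, r_2, \ldots, r_{i-1}, t_i, t_{i+1}, \ldots, t_m)$. Sliding the second copy of $t_i$ leftward by applying $\sigma_{i-1}^{-1}\sigma_{i-2}^{-1}\cdots \sigma_2^{-1}$ brings the two copies of $t_i$ to positions $1$ and $2$, producing the required tuple $(t_i, t_i, *, \ldots, *)$.

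The main obstacle is the Hurwitz transitivity on $\Red_{W'\cap T}(v)$ used in the previous paragraph. A naive appeal to Theorem~\ref{th:maintheorem3} would be circular if that theorem in turn relies on the result being proved, so an independent argument is needed. The natural approach is a secondary induction on the rank $i-1$ of $W'$: for $i-1 < n$ one works inside a smaller finite dual Coxeter system, and one must verify directly that $v$, having $T$-length equal to the rank of $W'$, is a quasi-Coxeter element of $W'$, whence Hurwitz transitivity on $\Red_{W'\cap T}(v)$ follows (for instance via the Bessis--Digne--Michel--style transitivity results for Coxeter elements, applied to the simple systems of $W'$). Handling this secondary induction cleanly, together with the combinatorial bookkeeping that the sliding step preserves the Hurwitz orbit, is the technical heart of the Lewis--Reiner argument.
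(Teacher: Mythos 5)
Your induction skeleton (parity of $m-n$, a key reduction producing an adjacent equal pair $(s,s)$, prepending it and inducting on the suffix) is the right one, but the proof you give for the key reduction has a genuine gap: the claim that $W'=\langle t_1,\ldots,t_{i-1}\rangle$ is a parabolic subgroup is false in general. Proposition 4.6 of \cite{BGRW17} only says that parabolic and classical parabolic coincide; it does not say that the subgroup generated by a reduced factorization is parabolic -- the failure of exactly this is the phenomenon (quasi-Coxeter elements) the whole paper is about. Concretely, in $W(B_2)$ consider the factorization $(s_{e_1},s_{e_2},s_{e_1-e_2},s_{e_2})$ of $w=s_{e_1}s_{e_2}s_{e_1-e_2}s_{e_2}$, a rotation with $\ell_T(w)=2$, so $m=4$, $n=2$. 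The prefix lengths are $0,1,2,1,2$, so the first descent is at $i=3$, with $v=s_{e_1}s_{e_2}=-\operatorname{id}$ and $W'=\langle s_{e_1},s_{e_2}\rangle\cong A_1\times A_1$, which is not parabolic in $B_2$; moreover $t_3=s_{e_1-e_2}\notin W'$. Hence Theorem~\ref{th:maintheorem2} does not give $\Red_T(v)=\Red_{W'\cap T}(v)$ (the right side misses $(s_{e_1-e_2},s_{e_1+e_2})$), Lemma~\ref{le:EachRefBelow} cannot be applied to $t_3$ inside $W'$, and -- since the subgroup generated by a tuple is a Hurwitz invariant -- no braid acting only on the reduced prefix can ever produce a tuple beginning with $t_3$: the Hurwitz orbit of $(s_{e_1},s_{e_2})$ is just $\{(s_{e_1},s_{e_2}),(s_{e_2},s_{e_1})\}$. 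The conclusion of the theorem still holds for this example, but only via moves involving the later entries, which your reduction step forbids.

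Note also that the paper itself does not prove Theorem~\ref{thm:LR2}; it quotes it from Lewis--Reiner, whose argument is genuinely different at exactly the point where yours breaks: they first prove (via their ``circuit lemma'', that every minimal linear dependence in a finite root system decomposes into two disjoint simple systems) that any reflection factorization whose associated roots are linearly dependent is Hurwitz equivalent to one with two equal adjacent entries, and then conclude by the same induction you set up. Your circularity worry about Theorem~\ref{th:maintheorem3} is not the real obstruction: that theorem is proved in \cite{BGRW17} independently of the present statement, and $v$ is automatically quasi-Coxeter for $(W',W'\cap T)$ since $(t_1,\ldots,t_{i-1})$ is a reduced factorization generating $W'$, so transitivity on $\Red_{W'\cap T}(v)$ is legitimately available. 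The problem is that this is the wrong set: the reflection $t_i$ you need to duplicate need not lie in $W'$, and $\Red_T(v)$ can be strictly larger than $\Red_{W'\cap T}(v)$ when $W'$ is not parabolic. Any repair must allow Hurwitz moves on all of the first $i$ entries (not just the reduced prefix), and making that work is essentially the content of the Lewis--Reiner circuit lemma.
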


\noindent We use the notation 
$$ (t_1 , \ldots , t_n) \sim (t_1', \ldots, t_n')$$
to indicate that both $n$-tuples are in the same orbit under the Hurwitz action. For the next statement see also the proof of \cite[Theorem 1.1]{LR16}
\begin{Lemma} \label{le:Reduction}
Let $(W,T)$ be a dual Coxeter system and $t_1, \ldots, t_{n}, t \in T$. Then
$$(t_1, \ldots, t_{n},t,t) \sim (t_1, \ldots, t_{n}, xtx^{-1},xtx^{-1})$$
for each $x \in \langle t_1, \ldots, t_{n} \rangle$. 
\end{Lemma}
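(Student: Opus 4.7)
The plan is to proceed by induction on the minimum length $\ell(x)$ of $x$ as a word in the generators $t_1, \ldots, t_n$. The base case $x = e$ is trivial. For the inductive step, write $x = y t_i$ with $\ell(y) < \ell(x)$; by the inductive hypothesis applied to $y$ we have $(t_1, \ldots, t_n, t, t) \sim (t_1, \ldots, t_n, t^y, t^y)$, and setting $s := t^y \in T$ and noting that $t^x = (t^y)^{t_i} = t_i s t_i$, it suffices to establish the key case
$$(t_1, \ldots, t_n, s, s) \sim (t_1, \ldots, t_n, t_i s t_i, t_i s t_i)$$
for an arbitrary reflection $s \in T$ and any index $i$.

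The main tool I would use is the full twist $(\sigma_1 \sigma_2)^3 \in \mathcal{B}_3$, which acts on any triple $(a, b, c)$ by simultaneous conjugation of each entry by the product $abc$. Applied to a triple of the form $(a, s, s)$ with $a, s \in T$, it yields $(a, asa, asa)$, because $a \cdot s \cdot s = a$ and conjugation of $a$ by itself is $a$ again. This immediately settles the case $i = n$: one applies the full twist $(\sigma_n \sigma_{n+1})^3$ on strands $n, n+1, n+2$ to $(t_1, \ldots, t_n, s, s)$, obtaining the desired tuple directly.

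For general $i$, the idea is first to shift $t_i$ into position $n$ by means of the braid $\beta := \sigma_i \sigma_{i+1} \cdots \sigma_{n-1}$, for which a direct calculation yields
$$\beta \cdot (t_1, \ldots, t_n, s, s) = (t_1, \ldots, t_{i-1}, t_i t_{i+1} t_i, t_i t_{i+2} t_i, \ldots, t_i t_n t_i, t_i, s, s).$$
One then applies the full twist on strands $n, n+1, n+2$, which conjugates the last three entries by $t_i \cdot s \cdot s = t_i$: the entry $t_i$ at position $n$ is fixed while the two copies of $s$ are replaced by $t_i s t_i$. Finally one applies $\beta^{-1}$. The main point to verify is that $\beta^{-1}$ restores the first $n$ coordinates to $(t_1, \ldots, t_n)$; this holds because $\beta \in \langle \sigma_1, \ldots, \sigma_{n-1}\rangle$ involves no generator that touches positions $n+1$ or $n+2$, so $\beta^{-1}$ leaves those two coordinates untouched, and because the action of any braid in $\langle \sigma_1, \ldots, \sigma_{n-1}\rangle$ on the first $n$ coordinates depends only on those coordinates. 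The main obstacle is precisely this bookkeeping check; once it is granted, the induction closes and the lemma follows.
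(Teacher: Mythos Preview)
Your proof is correct and follows essentially the same strategy as the paper's: reduce by induction to conjugation by a single generator $t_i$, shuttle $t_i$ next to the repeated pair, perform the conjugation there, and shuttle it back. The only cosmetic difference is that the paper slides $t_i$ all the way past both copies of $t$ (so the conjugation happens automatically in transit) and then uses the short move $(a,a,b)\sim(b,a,a)$ to reposition $t_i$, whereas you stop $t_i$ at position $n$ and invoke the full twist $(\sigma_n\sigma_{n+1})^3$ instead.
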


\begin{proof}
We have
\begin{align*}
(t_1, \ldots, t_{n},t,t) & \sim (t_1, \ldots, \hat{t_i}, t_i t_{i+1} t_i \ldots , t_i t_{n} t_i, t_i t t_i,t_i t t_i, t_i)\\
{} & \sim (t_1, \ldots, \hat{t_i}, t_i t_{i+1} t_i \ldots , t_i t_{n} t_i, t_i, t_i t t_i,t_i t t_i)\\
{} & \sim (t_1, \ldots, t_{n},t_i t t_i , t_i t t_i),
\end{align*}
where the entry $\hat{t_i}$ is omitted. 
\end{proof}

\section{Quasi-Coxeter elements in affine Coxeter groups} \label{sec:QuasiCoxElInAffine}
The aim of this section is to prove the follwing statement about quasi-Coxeter elements in affine Coxeter groups.
\begin{Proposition} \label{prop:CharacAffineQuasicCox}
An element $w \in W$ is quasi-Coxeter for an affine dual Coxeter system $(W,T)$ of rank $n+1$ if and only if there exists a reflection decomposition $w= s_{\gamma_1, \ell_1} \cdots s_{\gamma_{n+1}, \ell_{n+1}}$ $(\gamma_i \in \Phi,~\ell_i \in \ZZ)$ such that $W= \langle s_{\gamma_1, \ell_1}, \ldots , s_{\gamma_{n+1}, \ell_{n+1}} \rangle$. 
\end{Proposition}

\medskip
Fix a finite crystallographic Coxeter system $(W_0,S_0)$ of rank $n$ with set of reflections $T_0$ and associated crystallographic root system $\Phi$ in a euclidean vector space $V$. The group $W_0$ is a proper parabolic subgroup of the associated affine Coxeter group $W_a=W_{a, \Phi}$. There is an epimorphism
$$p: W_a \rightarrow W_0, s_{\alpha, k} \mapsto s_{\alpha}.$$
In Theorem \ref{thm:StructureAffineWeylGroup} we saw how to obtain a simple system $S_a$ for $W_a$. Let $T_a$ be the set of reflections for $(W_a,S_a)$. In particular $(W_a,T_a)$ is an affine dual Coxeter system of rank $n+1$. We fix this system until the end of this section. Since $T_a=\{ s_{\alpha, k} \mid \alpha \in \Phi, ~k \in \ZZ \}$, we have $p(T_a)=T_0$. 


\medskip

\medskip


\begin{Proposition} \label{prop:SameRootsOrbit}
Let $x \in W_a$ with $\ell_{T_a}(x)=m \geq n+1$ and $(s_{\beta_1,k_1}, \ldots ,s_{\beta_m, k_m}) \in \Red_{T_a}(x)$. Then there exist $\beta_i' \in \Phi, k_i' \in \ZZ$ $(1 \leq i \leq m)$ such that
$$ (s_{\beta_1,k_1}, \ldots , s_{\beta_m, k_m}) \sim 
(s_{\beta_1',k_1'}, \ldots ,s_{\beta_{m-1}',k_{m-1}'}, s_{\beta_{m-1}', k_m'}).$$
\end{Proposition}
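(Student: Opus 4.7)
The plan is to exploit the canonical projection $p\colon W \to W_0$, $s_{\alpha,k} \mapsto s_\alpha$, in two stages: first, use non-reducedness of the projected factorization to create a consecutive pair of reflections sharing a common root; then slide this pair to positions $m-1,m$ by iterating an explicit block Hurwitz move.

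Since $p$ is a group homomorphism, the Hurwitz action commutes with it: $p(\sigma \cdot (r_1,\ldots,r_m)) = \sigma \cdot (p(r_1),\ldots,p(r_m))$ for every $\sigma \in \mathcal{B}_m$. Projecting the given tuple yields $(s_{\beta_1},\ldots,s_{\beta_m}) \in T_0^m$, a factorization of $p(x) \in W_0$. Since $m \geq n+1 > \ell_{T_0}(p(x))$, this projected factorization is not $T_0$-reduced, and so by Theorem~\ref{thm:LR2} applied inside the finite dual Coxeter system $(W_0, T_0)$ there is a braid $\tau \in \mathcal{B}_m$ such that $\tau \cdot (s_{\beta_1},\ldots,s_{\beta_m})$ has $t_1 = t_2$. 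By equivariance, $\tau \cdot (s_{\beta_1,k_1},\ldots,s_{\beta_m,k_m})$ then has the form $(s_{u,a_1},s_{u,a_2},r_3',\ldots,r_m')$ whose first two reflections share a common root $u \in \Phi$.

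The second stage slides this same-root pair from positions $1,2$ to positions $m-1,m$. The crucial calculation, performed via Lemma~\ref{prop:HurwitzMove}, is that whenever positions $i,i+1$ contain $s_{u,a},s_{u,b}$ and position $i+2$ contains $s_{\gamma,d}$, the braid move $\sigma_i\sigma_{i+1}$ sends these three entries to $s_{\gamma,d'},s_{u,a'},s_{u,b'}$ for suitable $a',b',d' \in \ZZ$: the root $\gamma$ reappears in position $i$ because $s_u(s_u(\gamma)) = \gamma$, while the pair retains its common root $u$ in its shifted slot. Applying $\sigma_1\sigma_2,\sigma_2\sigma_3,\ldots,\sigma_{m-2}\sigma_{m-1}$ in succession thus shifts the pair one slot to the right at each stage, producing after $m-2$ applications a tuple of the form demanded by the proposition.

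The main obstacle is the pair-creation step in the case $\ell_{T_0}(p(x)) < n$, for which Theorem~\ref{thm:LR2} is not strictly applicable as stated; only the weaker statement that any non-reduced factorization in a finite Coxeter system is Hurwitz-equivalent to one with some $t_i = t_{i+1}$ is needed, and this is the essential content of the Lewis--Reiner argument and holds irrespective of the length of $w$. Beyond this, the argument reduces to careful bookkeeping of the translation coefficients $k_i$ under Hurwitz moves, which is routine given the formula in Lemma~\ref{prop:HurwitzMove}.
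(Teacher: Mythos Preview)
Your argument is correct and follows essentially the same route as the paper: project via $p$ to the finite Weyl group, invoke Lewis--Reiner (Theorem~\ref{thm:LR2}) on the resulting non-reduced factorization to produce an adjacent equal pair, and lift the same braid back to the affine tuple; the paper simply states the outcome with the matching pair at the end, whereas you spell out the (correct) $\sigma_i\sigma_{i+1}$ slides. Your worry about the hypothesis of Theorem~\ref{thm:LR2} is unfounded: in that statement the symbol $n$ is introduced as $\ell_T(w)$, not as the rank of the ambient group, so the theorem applies to $p(x)$ regardless of the value of $\ell_{T_0}(p(x))$.
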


\begin{proof}
By Lemma \ref{lem:Carter} the reflection decomposition $s_{\beta_1} \cdots s_{\beta_m}$ can not be reduced in the underlying finite group $W_0$, because the roots $\beta_1, \ldots, \beta_m$ can not be linearly independent. Since $W_0$ is a parabolic subgroup of $W_a$, this reflection decomposition can also not be reduced in $W_a$ by Theorem \ref{thm:RedParabolic}. Hence we can apply Theorem \ref{thm:LR2} to obtain
$$ (s_{\beta_1}, \ldots , s_{\beta_m}) \sim 
(s_{\beta_1'}, \ldots ,s_{\beta_{m-1}'} ,s_{\beta_{m-1}'})$$
for roots $\beta_1', \ldots, \beta_{m-1}' \in \Phi$. This Hurwitz equivalence is given by some braid $\tau$. Applying the braid $\tau$ to the reflection decomposition $(s_{\beta_1,k_1}, \ldots , s_{\beta_m, k_m})$ yields the assertion.
\end{proof}

\medskip

\noindent Let $w= s_{\gamma_1', k_1} \cdots s_{\gamma_{n+1}', k_{n+1}}$ ($\gamma_i' \in \Phi, k_i \in \ZZ$) be a reflection decomposition of an element $w \in W_a$ such that $W_a= \langle s_{\gamma_1', k_1}, \ldots , s_{\gamma_{n+1}', k_{n+1}} \rangle$. Note that $W_a$ cannot be generated by fewer than $n+1$ reflections \cite[Proposition 2.1]{BGRW17}. Using Theorem \ref{thm:LR2}, we obtain
\begin{align} \label{equ:ParabolicQuasi}
(s_{\gamma_1'}, \ldots, s_{\gamma_{n+1}'}) \sim
(s_{\gamma_1}, \ldots, s_{\gamma_{n-1}}, s_{\gamma_{n}}, s_{\gamma_{n}})
\end{align}
for roots $\gamma_1, \ldots , \gamma_n \in \Phi$. Since $p(W_a)=W_0$, we have 
$$W_0= \langle s_{\gamma_1}, \ldots, s_{\gamma_{n-1}}, s_{\gamma_{n}} \rangle.$$
Therefore the element $v:=s_{\gamma_1} \cdots s_{\gamma_{n-1}} s_{\gamma_{n}}$ is quasi-Coxeter in $W_0$. Furthermore we have 
\begin{align} \label{equ:ThisIsQuasiCox}
v' :=p(w)= s_{\gamma_1} \cdots s_{\gamma_{n-1}} \leq_{T} s_{\gamma_1} \cdots s_{\gamma_{n-1}} s_{\gamma_{n}}=v.
\end{align}
Then \cite[Corollary 6.11]{BGRW17} tells us that $v'$ is a parabolic quasi-Coxeter element in $W_0$ with $\ell_{T_0}(v')=n-1$. Let $W_0':=\langle s_{\gamma_1}, \ldots , s_{\gamma_{n-1}} \rangle$ be the corresponding parabolic subgroup. As described in (the proof of) Proposition \ref{prop:SameRootsOrbit} we have
\begin{align} \label{equ:AffineQuasi}
(s_{\gamma_1', k_1}, \ldots, s_{\gamma_{n+1}', k_{n+1}}) \sim 
(s_{\gamma_1, \ell_1}, \ldots, s_{\gamma_{n-1}, \ell_{n-1}}, s_{\gamma_{n}, \ell_{n}}, s_{\gamma_{n}, \ell_{n+1}})
\end{align}
for $\ell_1, \ldots, \ell_{n+1} \in \ZZ$. Note that we can assume the roots $\gamma_i$ to be positive since $s_{\alpha, k}= s_{-\alpha, -k}$. We conclude
\begin{align} \label{equ:GenAffineGroup}
W_a = \langle s_{\gamma_1, \ell_1}, \ldots, s_{\gamma_{n-1}, \ell_{n-1}}, s_{\gamma_{n}, \ell_{n}}, s_{\gamma_{n}, \ell_{n+1}} \rangle
\end{align}
and $\ell_{n} \neq \ell_{n+1}$, because otherwise $W_a$ is generated by fewer than $n+1$ reflections.

\begin{Lemma} \label{le:LinIndCoroots}
For a crystallographic root system $\Phi$ and a set of roots $\{ \beta_1, \ldots, \beta_n \} \subseteq \Phi$, one has
$$s_{\beta_n} \cdots s_{\beta_2} (\beta_1)^{\vee} \in \spanz(\beta_1^{\vee}, \ldots, \beta_n^{\vee}).$$
\end{Lemma}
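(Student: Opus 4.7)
The plan is to prove this by induction on $n$, using the key identity that reflecting one coroot along another stays in the integer span of the two coroots. Specifically, the main computational step is:
\begin{align*}
s_{\beta}(\alpha^{\vee}) = \alpha^{\vee} - \frac{2(\alpha \mid \beta)}{(\alpha \mid \alpha)} \beta^{\vee},
\end{align*}
which follows from a direct calculation: write $\alpha^{\vee} = \frac{2\alpha}{(\alpha\mid\alpha)}$, apply $s_\beta$ to obtain $\frac{2\alpha}{(\alpha\mid\alpha)} - \frac{4(\alpha\mid\beta)}{(\alpha\mid\alpha)(\beta\mid\beta)}\beta$, and then rewrite $\beta = \tfrac{(\beta\mid\beta)}{2}\beta^{\vee}$. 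Crucially, the coefficient $\frac{2(\alpha \mid \beta)}{(\alpha \mid \alpha)}$ is the Cartan integer of $\alpha$ and $\beta$, hence an integer since $\Phi$ is crystallographic. So $s_{\beta}(\alpha^{\vee}) \in \spanz(\alpha^{\vee}, \beta^{\vee})$.

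With this established, I would set $u_k := s_{\beta_k} s_{\beta_{k-1}} \cdots s_{\beta_2}(\beta_1)^{\vee}$ for $1 \leq k \leq n$, so $u_1 = \beta_1^{\vee}$ and $u_k = s_{\beta_k}(u_{k-1})$, and prove by induction on $k$ that $u_k \in \spanz(\beta_1^{\vee}, \ldots, \beta_k^{\vee})$. The base case $k=1$ is immediate. For the inductive step, assume $u_{k-1} = \sum_{i=1}^{k-1} c_i \beta_i^{\vee}$ with $c_i \in \ZZ$. Then by linearity,
\begin{align*}
u_k = s_{\beta_k}(u_{k-1}) = \sum_{i=1}^{k-1} c_i\, s_{\beta_k}(\beta_i^{\vee}),
\end{align*}
and by the identity above each $s_{\beta_k}(\beta_i^{\vee})$ lies in $\spanz(\beta_i^{\vee}, \beta_k^{\vee}) \subseteq \spanz(\beta_1^{\vee}, \ldots, \beta_k^{\vee})$. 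Hence $u_k$ does as well, closing the induction and yielding the lemma at $k=n$.

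There is no real obstacle here: the crystallographic hypothesis is used exactly once, to guarantee integrality of the Cartan integer, and the induction is completely straightforward. The only subtle point worth stating clearly in the writeup is the distinction between the naive coroot of $s_\beta(\alpha)$ and $s_\beta$ applied to $\alpha^{\vee}$; these in fact agree (because reflections are isometries, so $(s_\beta(\alpha)\mid s_\beta(\alpha)) = (\alpha\mid\alpha)$), but I would not need this identification since I work directly with $s_{\beta_k}$ applied to the coroots $\beta_i^{\vee}$ at each inductive step.
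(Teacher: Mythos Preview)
Your proof is correct and follows essentially the same approach as the paper: both establish the key identity $s_{\beta}(\alpha)^{\vee} = \alpha^{\vee} - \frac{2(\alpha\mid\beta)}{(\alpha\mid\alpha)}\beta^{\vee}$ with the coefficient an integer by the crystallographic hypothesis, and then conclude by induction. The only cosmetic difference is that the paper's (implicit) inductive step applies the identity directly to the root $\gamma = s_{\beta_{k-1}}\cdots s_{\beta_2}(\beta_1)$, yielding $u_k = u_{k-1} - (\text{integer})\,\beta_k^{\vee}$ in one stroke, whereas you first expand $u_{k-1}$ as an integer combination of $\beta_1^{\vee},\ldots,\beta_{k-1}^{\vee}$ and then apply $s_{\beta_k}$ termwise; both are fine.
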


\begin{proof}
One has
\begin{align*}
s_{\beta_2}(\beta_1)^{\vee} & = \frac{2s_{\beta_2}(\beta_1)}{(\beta_1 \mid \beta_1)}\\
{} & = \frac{2}{(\beta_1 \mid \beta_1)} (\beta_1 - (\beta_1 \mid \beta_2^{\vee}) \beta_2)\\
{} & = \beta_1^{\vee} - \underbrace{\frac{2(\beta_1 \mid \beta_2)}{(\beta_1 \mid \beta_1)}}_{\in \ZZ} \beta_2^{\vee}.
\end{align*}
The general assertion follows by induction.
\end{proof}

\medskip
\noindent Let us point out that all reflection decompositions of a fixed element have same parity.
\begin{Proposition} \label{prop:RefLengthNonRed}
Let $(W,S)$ be a Coxeter system with reflections $T$. Let $w \in W$ and $w=t_1 \cdots t_k = r_1 \cdots r_{\ell}$ be two reflection decompositions for $w$. Then $k$ and $\ell$ differ by a multiple of $2$.
\end{Proposition}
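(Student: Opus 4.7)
The plan is to use the sign character of the Coxeter system $(W,S)$. Recall that for any Coxeter system there is a well-defined group homomorphism
\[
\epsilon : W \longrightarrow \{\pm 1\}, \qquad s \longmapsto -1 \text{ for every } s \in S,
\]
obtained by sending each Coxeter generator to $-1$; this is compatible with the braid relations $(st)^{m_{st}}=1$ since $(-1)^{2 m_{st}} = 1$. Equivalently, one may take $\epsilon(w) = (-1)^{\ell_S(w)}$, where $\ell_S$ is the standard (simple reflection) length, which is well-known to be well-defined modulo $2$.

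Next, I would observe that every reflection $t \in T$ satisfies $\epsilon(t) = -1$. Indeed, by the definition of $T$ recalled in the section on dual Coxeter systems, every reflection is of the form $t = wsw^{-1}$ for some $s \in S$ and $w \in W$, and hence
\[
\epsilon(t) = \epsilon(w)\epsilon(s)\epsilon(w)^{-1} = \epsilon(s) = -1.
\]

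Finally, applying $\epsilon$ to the two given $T$-decompositions of $w$ yields
\[
(-1)^k = \epsilon(t_1)\cdots\epsilon(t_k) = \epsilon(w) = \epsilon(r_1)\cdots\epsilon(r_l) = (-1)^l,
\]
so $(-1)^{k-l} = 1$ and $k \equiv l \pmod 2$, as claimed. There is no real obstacle here; the only point requiring care is the existence of the sign homomorphism $\epsilon$, which is immediate from the presentation of $W$ by the Coxeter relations.
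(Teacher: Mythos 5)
Your proof is correct and takes essentially the same approach as the paper: both arguments apply the sign character of $W$, under which every reflection maps to $-1$, to the two $T$-decompositions and compare parities. The only cosmetic difference is that the paper realizes this character as $w \mapsto \det(\sigma(w))$ via the geometric representation, while you construct it directly from the Coxeter presentation; it is the same homomorphism.
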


\begin{proof}
Consider the geometric representation $\sigma: W \rightarrow \GL(V)$ of $W$ as given in \cite{Hum90}. The reflections have determinant $-1$ with respect to this representation. Consider the reflecion decompositions $w=t_1 \cdots t_k$ and $w = r_1 \cdots r_{\ell}$ under the sign representation $w \mapsto \det(\sigma(w))$. Hence $(-1)^k = (-1)^{\ell}$, which yields the assertion. 
\end{proof}


\medskip
\begin{proof}[Proof of Proposition \ref{prop:CharacAffineQuasicCox}]
We prove the assertion for our fixed system $(W_a, T_a)$. The forward direction is clear by the definition of a quasi-Coxeter element. For the other direction it remains to show that $s_{\gamma_1, \ell_1} \cdots s_{\gamma_{n+1}, \ell_{n+1}}$ is a reduced reflection decomposition. By what we have observed before (in particular in (\ref{equ:AffineQuasi})), we can assume that $\gamma_n = \gamma_{n+1}$. In particular we have 
\begin{align} \label{equ:GroupIsdochGenerated}
W_0= \langle s_{\gamma_1}, \ldots, s_{\gamma_n} \rangle.
\end{align}

We first claim, that under the map $p: W_a \rightarrow W_0, ~s_{\alpha, k} \mapsto s_{\alpha}$, the element $w$ is mapped to an element in $W_0$ of absolute length $n-1$. Since $\gamma_n = \gamma_{n+1}$, we have $\ell_{T_0}(p(w)) \leq n-1$. Assume that $\ell_{T_0}(p(w))<n-1$. Then we can apply Theorem \ref{thm:LR2} to obtain 
\begin{align} \label{equ:BraidEqual}
(s_{\gamma_1}, \ldots , s_{\gamma_{n-1}}) \sim (s_{\gamma_1'}, \ldots , s_{\gamma_{n-3}'}, s_{\gamma_{n-2}'}, s_{\gamma_{n-2}'}).
\end{align}
Hence we obtain 
\begin{align*}
W_0 = p(W_a) & \stackrel{\phantom{(\ref{equ:BraidEqual})}}{=} p(\langle s_{\gamma_1, \ell_1}, \ldots ,s_{\gamma_{n-1}, \ell_{n-1}},s_{\gamma_{n}, \ell_{n}}, s_{\gamma_{n}, \ell_{n+1}} \rangle)\\
{} & \stackrel{(\ref{equ:BraidEqual})}{=} p(\langle s_{\gamma_1', \ell_1'}, \ldots ,s_{\gamma_{n-3}', \ell_{n-3}'}, s_{\gamma_{n-2}', \ell_{n-2}'}, s_{\gamma_{n-2}', \ell_{n-1}'}, s_{\gamma_{n}, \ell_{n}}, s_{\gamma_{n}, \ell_{n+1}} \rangle)\\
{} & \stackrel{\phantom{(\ref{equ:BraidEqual})}}{=} \langle s_{\gamma_1'}, \ldots , s_{\gamma_{n-2}'}, s_{\gamma_n} \rangle,
\end{align*}
contradicting again the fact that $W_0$ can not be generated by fewer than $n$ reflections.

Thus by Proposition \ref{prop:RefLengthNonRed} there are just two possibilities remaining: $\ell_{T_a}(w)=n+1$ or $\ell_{T_a}(w)=n-1$. Assume the latter one. Let $(s_{\beta_1, m_1}, \ldots, s_{\beta_{n-1}, m_{n-1}}) \in \Red_{T_a}(w)$. Then one has
$$v'=p(w)=s_{\gamma_1} \cdots s_{\gamma_{n-1}}= s_{\beta_1} \cdots s_{\beta_{n-1}} \leq_{T_0} s_{\gamma_1} \cdots s_{\gamma_{n-1}} s_{\gamma_n}.$$
By (\ref{equ:GroupIsdochGenerated}) we have that $s_{\gamma_1} \cdots s_{\gamma_{n-1}} s_{\gamma_n}$ is a quasi-Coxeter element in $W_0$. Then \cite[Corollary 6.11]{BGRW17} implies that $v'$ is a parabolic quasi-Coxeter element in $W_0$. Therefore by Theorem \ref{th:maintheorem3} we have 
$$(s_{\gamma_1}, \ldots , s_{\gamma_{n-1}}) \sim (s_{\beta_1}, \ldots , s_{\beta_{n-1}}).$$
Hence we can assume (up to Hurwitz equivalence) that $\gamma_i = \beta_i$ for $1 \leq i \leq n-1$. Note that the integers $\ell_1, \ldots, \ell_{n+1}$ might have changed. By applying Lemma \ref{le:AffineFac} we obtain 
\begin{align*}
w & = v' \TR \left( \sum_{i=1}^{n-1} -\ell_i s_{\beta_{n-1}} \cdots s_{\beta_{i+1}}(\beta_i)^{\vee} + (\ell_{n}-\ell_{n+1})\gamma_{n}^{\vee} \right)\\
{}  & = v' \TR \left(\sum_{i=1}^{n-1} -m_i s_{\beta_{n-1}} \cdots s_{\beta_{i+1}}(\beta_i)^{\vee} \right).
\end{align*}
Hence 
$$\sum_{i=1}^{n-1} (m_i-\ell_i) s_{\beta_{n-1}} \cdots s_{\beta_{i+1}}(\beta_i)^{\vee} + (\ell_{n}-\ell_{n+1})\gamma_{n}^{\vee} = 0.$$
The roots $ \beta_1, \ldots , \beta_{n-1}, \gamma_{n} $ are linearly independent by Lemma \ref{lem:Carter}. Therefore we obtain $\ell_{n}-\ell_{n+1}=0$ using Lemma \ref{le:LinIndCoroots}, hence $s_{\gamma_{n}, \ell_{n}} = s_{\gamma_{n}, \ell_{n+1}}$, contradicting the fact that $W_a$ can not be generated by fewer than $n+1$ reflections. 
\end{proof}

\section{Generating finite and affine Coxeter groups by reflections} \label{sec:GenerationAffine}

The main goal of this section will be to prove the following statement.

\begin{theorem} \label{cor:ParWallLongRoot}
Let $\Phi$ be an irreducible crystallographic root system of rank $n$. If 
$$W_{a,\Phi} = \langle s_{\alpha_1, \ell_1}, \ldots, s_{\alpha_{n-1}, \ell_{n-1}}, s_{\alpha_{n}, \ell_{n}}, s_{\alpha_{n}, \ell_{n+1}} \rangle$$
for roots $\alpha_1, \ldots, \alpha_n \in \Phi$ and integers $\ell_1, \ldots, \ell_n, \ell_{n+1} \in \ZZ$, then $\vert \ell_{n+1} -\ell_{n} \vert =1$ and $\alpha_{n}$ is a long root.
\end{theorem}

This theorem will be essential in the next section. We will use it to show that each reduced reflection decomposition of a quasi-Coxeter element in an affine Coxeter group $W_{a, \Phi}$ induces (via the projection $p$) a generating set of the finite Coxeter group $p(W_{a, \Phi})$.

\medskip
As a generalization of \cite[Corollary 6.10]{BGRW17} we observe in \cite[Theorem 1.3]{BW18} the following.
\begin{theorem} \label{prop:CharParSub}
Let $(W,T)$ be a finite dual Coxeter system of rank $n$ and let $W'$ be a reflection subgroup of rank $n-1$. Then $W'$ is parabolic if and only if there exists $t \in T$ such that $\langle W', t \rangle =W$. If in addition $(W,T)$ is crystallographic, then $t$ is unique up to conjugation with elements in $W'$.
\end{theorem}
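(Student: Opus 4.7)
The forward implication is straightforward: choose a simple system $\{s_1,\dots,s_n\}$ of $(W,T)$ with $W' = \langle s_1,\dots,s_{n-1}\rangle$ and take $t := s_n$.

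For the converse in the crystallographic case, my plan is to apply Theorem~\ref{PropVoigt} twice. Write $W' = W_{\Phi'}$ and $t = s_\alpha$. Since $W_R = \langle W_{\Phi'},s_\alpha\rangle = W_\Phi$ for $R := \Phi'\cup\{\alpha\}$, the equivalence (a)$\Leftrightarrow$(c) forces the smallest root subsystem of $\Phi$ containing $R$ to be $\Phi$ itself, and then (d) yields
\[
L(\Phi) \;=\; L(\Phi') + \mathbb{Z}\alpha, \qquad L(\Phi^\vee) \;=\; L((\Phi')^\vee) + \mathbb{Z}\alpha^\vee.
\]
The quotients $L(\Phi)/L(\Phi')$ and $L(\Phi^\vee)/L((\Phi')^\vee)$ are therefore cyclic of $\mathbb{Z}$-rank one, hence isomorphic to $\mathbb{Z}$. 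Now fix any $\beta \in \Phi \cap \spanr(\Phi')$; clearing denominators gives $c\beta \in L(\Phi')$ for some $c > 0$, and torsion-freeness of the two quotients forces $\beta \in L(\Phi')$ and $\beta^\vee \in L((\Phi')^\vee)$. Let $\tilde{\Phi}$ be the smallest root subsystem of $\Phi$ containing $\Phi'\cup\{\beta\}$; by (d) we have $L(\tilde{\Phi}) = L(\Phi')$ and $L(\tilde{\Phi}^\vee) = L((\Phi')^\vee)$. Applying Theorem~\ref{PropVoigt} a second time with ambient $\Phi$, subsystem $\tilde{\Phi}$, and generating set $\Phi' \subseteq \tilde{\Phi}$, condition (d) holds, so (a) shows that $\tilde{\Phi}$ is the smallest root subsystem of $\Phi$ containing $\Phi'$, namely $\Phi'$ itself. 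Hence $\beta \in \tilde{\Phi} = \Phi'$, proving $\Phi' = \Phi \cap \spanr(\Phi')$ and therefore $W'$ is parabolic.

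For the non-crystallographic finite types ($H_3$, $H_4$, and $I_2(m)$ with $m \notin \{3,4,6\}$), my plan is a short case analysis. In $I_2(m)$ and $H_3$ every rank-$(n-1)$ reflection subgroup is already parabolic by inspection of the reflection arrangement, so the equivalence holds trivially. For $H_4$ one enumerates the $W$-conjugacy classes of rank-$3$ reflection subgroups and checks for each non-parabolic class that $\langle W', t\rangle$ lies in a proper rank-$4$ reflection subgroup of $H_4$ (such as $D_4$ or $A_1 \times H_3$) for every reflection $t$, and is in particular strictly smaller than $W$.

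For the uniqueness statement in the crystallographic case, the lattice identity above shows that any reflection $t' = s_{\alpha'}$ with $\langle W', t'\rangle = W$ must have $\alpha'$ mapping to a generator of $L(\Phi)/L(\Phi') \cong \mathbb{Z}$, so $\alpha'$ is determined up to sign and up to an element of $L(\Phi')$. Decomposing $V = \spanr(\Phi') \oplus \spanr(\Phi')^\perp$ and writing $\alpha = \alpha_\parallel + \alpha_\perp$, this fixes the orthogonal component $\alpha_\perp$ up to sign. The main obstacle is to verify that two roots of $\Phi$ sharing the same $\alpha_\perp$ are $W'$-conjugate; I plan to derive this from the transitivity of the now-parabolic group $W'$ on $\Phi \cap (\alpha_\perp + \spanr(\Phi'))$, a standard property of the restriction of a crystallographic root system to parabolic slices.
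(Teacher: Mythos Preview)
Your argument for the equivalence in the crystallographic case is clean and essentially complete: the two applications of Theorem~\ref{PropVoigt}, combined with the torsion-freeness of $L(\Phi)/L(\Phi')$, correctly yield $\Phi' = \Phi \cap \spanr(\Phi')$ and hence that $W'$ is parabolic. This is a different route from the paper, which does not prove the equivalence here but cites it from \cite{BGRW17} and \cite{BaW}; your lattice argument has the advantage of being self-contained and uniform. The non-crystallographic sketch is plausible but thin---in particular the assertion that every rank-$2$ reflection subgroup of $H_3$ is parabolic, and the whole $H_4$ discussion, would need to be carried out explicitly.

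The uniqueness argument, however, has a genuine gap. Your final step asserts that $W'$ acts transitively on $\Phi \cap (\alpha_\perp + \spanr(\Phi'))$, calling this ``a standard property of the restriction of a crystallographic root system to parabolic slices.'' This is false. Take $\Phi$ of type $B_2$ with roots $\pm e_1, \pm e_2, \pm e_1 \pm e_2$, and let $\Phi' = \{\pm e_1\}$, a maximal parabolic subsystem. Then $\Phi \cap (e_2 + \spanr(\Phi')) = \{e_2,\, e_1+e_2,\, -e_1+e_2\}$, and $W' = \langle s_{e_1}\rangle$ has \emph{two} orbits on this set: $\{e_2\}$ and $\{\pm e_1 + e_2\}$. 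Moreover both $e_2$ and $e_1+e_2$ map to a generator of $L(\Phi)/L(\Phi') \cong \ZZ$, so the root-lattice condition alone does not single out the correct $W'$-orbit. (The coroot-lattice condition does distinguish them here---$e_2^\vee = 2e_2$ maps to twice a generator of $L(\Phi^\vee)/L((\Phi')^\vee)$---but you neither invoke it nor show that the two lattice conditions together suffice in general.) The paper's own proof of uniqueness, contained in the source after \verb|\end{document}|, avoids this issue entirely by a type-by-type check: explicit orbit computations in types $A$, $B$, $D$, and a computer verification for the exceptional types. Your uniform approach could perhaps be salvaged by bringing in the coroot quotient and arguing more carefully about root lengths within a slice, but as written the argument does not close.
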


\medskip
For the rest of this section fix an irreducible crystallographic root system $\Phi$ of rank $n$. 

\begin{remark} \label{rem:RootRatio}
\cite[Section 2.9]{Hum90}
The set $\{ (\alpha \mid \alpha) \mid \alpha \in \Phi \}$ has at most two elements. So if it has two elements, we distinguish between \defn{long} and \defn{short} roots. The ratio of squared root lengths can only be $2$ or $3$. Hereafter we denote this ratio by $\delta$. We decompose $\Phi$ as $\Phi = \Phi_s \cup \Phi_{\ell}$, where $\Phi_s$ is the set of short roots and $\Phi_{\ell}$ is the set of long roots. If the set has just a single element, we can assume it to be $2$ and all roots are called long. In this case $\Phi=\Phi_{\ell}$ is called \defn{simply-laced}.
\end{remark}

Consider the finite Coxeter group $W_0 := W_{\Phi}$ and let
$$W_0= \langle s_{\alpha_1}, \ldots , s_{\alpha_n} \rangle$$ 
for some roots $\alpha_i \in \Phi$. Let $W_a=W_{a,\Phi}$ be the associated affine Coxeter group. Furthermore we assume 
\begin{align} \label{equ:AffineRefGroup}
W_a = \langle s_{\alpha_1}, \ldots , s_{\alpha_n}, s_{\alpha_n,1} \rangle.
\end{align}
Note that we always find such a generating set. To see this, we start with a generating set $\{ s_{\alpha_1}, \ldots, s_{\alpha_n} \}$ of $W_0$, that is $s_{\alpha_1} \cdots s_{\alpha_n}$ is quasi-Coxeter. We can assume by Theorem \ref{th:maintheorem3} and Lemma \ref{le:EachRefBelow} that $\alpha_n=\widetilde{\alpha}$. Since the set $\{ s_{\alpha_1}, \ldots, s_{\alpha_n} \}$ generates $W_0$, the set $\{ s_{\alpha_1}, \ldots, s_{\alpha_n}, s_{\alpha_n, 1} \}$ generates $W_a$ by Theorem \ref{thm:StructureAffineWeylGroup}.

\begin{Lemma} \label{le:CrystValue}
\cite[Ch. VI, 1.3]{Bou02} Assume that $\Phi$ is not simply-laced. Let $\alpha, \beta \in \Phi$ be of different lengths with $(\alpha \mid \beta) \neq 0$ and let $\delta$ be the ratio of squared root lengths. Then 
\begin{equation*}
\frac{2(\alpha \mid \beta)}{(\beta \mid \beta)} =
\begin{cases} 
 \pm 1  &\mbox{if } \alpha \in \Phi_s, \beta \in \Phi_{\ell} \\
 \pm \delta & \mbox{if }  \alpha \in \Phi_{\ell}, \beta \in \Phi_s.
\end{cases}
\end{equation*}
\end{Lemma}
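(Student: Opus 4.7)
The plan is to exploit the two standard facts about Cartan integers in a crystallographic root system. Writing $n(\alpha,\beta) := \frac{2(\alpha \mid \beta)}{(\beta \mid \beta)}$, the crystallographic condition gives $n(\alpha,\beta) \in \ZZ$, and Cauchy--Schwarz combined with positive-definiteness of $(-\mid -)$ yields
$$n(\alpha,\beta)\cdot n(\beta,\alpha) \;=\; \frac{4(\alpha \mid \beta)^2}{(\alpha \mid \alpha)(\beta \mid \beta)} \;=\; 4\cos^2 \theta \;\in\; \{0,1,2,3,4\},$$
with the value $4$ attained only when $\alpha = \pm\beta$. Since $\alpha$ and $\beta$ have different lengths, we have $\alpha \neq \pm\beta$, and the assumption $(\alpha \mid \beta) \neq 0$ forces both Cartan integers to be nonzero (and of the same sign). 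Hence their product lies in $\{1,2,3\}$.

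Next I would compare sizes via the quotient
$$\frac{n(\alpha,\beta)}{n(\beta,\alpha)} \;=\; \frac{(\alpha \mid \alpha)}{(\beta \mid \beta)}.$$
In the first case ($\alpha \in \Phi_s$, $\beta \in \Phi_l$), the right-hand side equals $1/\delta < 1$, so $|n(\alpha,\beta)| < |n(\beta,\alpha)|$. Two nonzero integers whose absolute-value product is in $\{1,2,3\}$ and which satisfy a strict inequality must have the smaller one equal to $\pm 1$, so $n(\alpha,\beta) = \pm 1$, proving the first branch. In the second case ($\alpha \in \Phi_l$, $\beta \in \Phi_s$), the same argument with the roles reversed yields $n(\beta,\alpha) = \pm 1$; multiplying by the quotient $\delta$ gives $n(\alpha,\beta) = \pm \delta$.

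There is no real obstacle here: once the two basic inequalities for Cartan integers are in hand, the case analysis is immediate. The only thing to be a little careful about is ruling out $n(\alpha,\beta) = 0$ (handled by the hypothesis $(\alpha \mid \beta) \neq 0$) and $\alpha = \pm\beta$ (handled by the different-lengths hypothesis), so that the product $n(\alpha,\beta)\cdot n(\beta,\alpha)$ is genuinely confined to $\{1,2,3\}$ rather than $\{0,\dots,4\}$.
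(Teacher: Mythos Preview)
Your argument is correct. The two Cartan integers $n(\alpha,\beta)$ and $n(\beta,\alpha)$ are nonzero integers of the same sign whose product equals $4\cos^2\theta \in \{1,2,3\}$ (the extremes $0$ and $4$ being excluded by the hypotheses $(\alpha\mid\beta)\neq 0$ and $\alpha\neq\pm\beta$), and their quotient is the length ratio $(\alpha\mid\alpha)/(\beta\mid\beta)$. From these two constraints the conclusion drops out immediately, as you show.

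The paper does not give a proof at all: it simply writes ``Case by case resp.\ see \cite[Ch.~VI, 1.3]{Bou02}'', i.e.\ it defers to the classification tables in Bourbaki. Your route is genuinely different and better for this purpose: it is uniform, avoids any appeal to the classification of non--simply-laced root systems, and makes transparent \emph{why} the short--into--long Cartan integer is forced to be $\pm 1$ while the long--into--short one picks up the squared length ratio $\delta$. The only implicit assumption you use beyond the hypotheses is that the root system is reduced (so that $4\cos^2\theta = 4$ forces $\alpha=\pm\beta$); this is part of the standing conventions in the paper via \cite{Hum90}, so nothing is missing.
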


\medskip
\noindent Next we are going to show that the root $\alpha_n$ in equation (\ref{equ:AffineRefGroup}) cannot be short. Therefore note the following.
\begin{remark} \label{rem:OnShortLongRoots}
If we have in Equation (\ref{equ:AffineRefGroup}) that $\alpha_n \in \Phi_s, \alpha_i \in \Phi_{\ell}$ for some $i < n$ and $(\alpha_n \mid \alpha_i) \neq 0$, then by Lemma \ref{le:AffConj} and Lemma \ref{le:CrystValue} we have 
$$s_{\alpha_i}^{s_{\alpha_n, 1}} = s_{s_{\alpha_n}(\alpha_i), \pm \delta},$$
where $\delta$ is the ratio of squared root lengths. If $\alpha_n \in \Phi_{\ell}, \alpha_i \in \Phi_s$ we have 
$$s_{\alpha_i}^{s_{\alpha_n, 1}} = s_{s_{\alpha_n}(\alpha_i), \pm 1}.$$
\end{remark}


\medskip
\begin{Lemma} \label{le:PropSublattice}
Let $\Phi$ be an  irreducible crystallographic root system which consists of long and short roots and let $\delta$ be the ratio of squared root lengths. Then 
$$\spanz(\{ \delta \alpha \mid \alpha \in \Phi_s \} \cup \Phi_{\ell}) \subsetneq L(\Phi).$$
\end{Lemma}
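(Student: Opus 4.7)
The plan is to rewrite the lattice in question as $M := \delta\, L(\Phi_s) + L(\Phi_l)$, note that the inclusion $M \subseteq L(\Phi) = L(\Phi_s) + L(\Phi_l)$ is automatic (every generator of $M$ lies in $L(\Phi)$), and then exhibit in each of the relevant types $B_n, C_n, F_4, G_2$ (the irreducible crystallographic root systems with both long and short roots, by the classification) a short root $\alpha \in \Phi_s$ that fails to lie in $M$. Recall that $\delta = 2$ in types $B_n, C_n, F_4$ and $\delta = 3$ in type $G_2$.

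For $G_2$ a uniform divisibility argument works. I would fix any short root $\gamma$ and consider the linear functional $\phi_\gamma(v) := 2(\gamma \mid v)/(\gamma \mid \gamma)$ on $V$. For $v = \delta \beta$ with $\beta \in \Phi_s$ one has $\phi_\gamma(v) = \delta \cdot 2(\gamma \mid \beta)/(\gamma \mid \gamma) \in \delta \ZZ$ by integrality of the Cartan integer; for $v = \alpha \in \Phi_l$ the identity $(\alpha \mid \alpha) = \delta (\gamma \mid \gamma)$ gives $\phi_\gamma(v) = \delta \cdot 2(\gamma \mid \alpha)/(\alpha \mid \alpha) \in \delta \ZZ$ as well. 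Hence $\phi_\gamma(M) \subseteq \delta \ZZ$ for every short $\gamma$. If some short root $\gamma$ lay in $M$ we would get $2 = \phi_\gamma(\gamma) \in 3\ZZ$, a contradiction; so no short root belongs to $M$, settling $G_2$.

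For $\delta = 2$ the divisibility argument is vacuous, so I would finish by direct inspection in the standard coordinate realizations of \cite[Ch.~VI, Planches]{Bou02}. In type $B_n$, with $\Phi_s = \{\pm e_i\}$ and $\Phi_l = \{\pm e_i \pm e_j\}$, one has $L(\Phi) = \ZZ^n$; since $2 e_i = (e_i + e_j) + (e_i - e_j) \in L(\Phi_l)$ already, $M$ coincides with the $D_n$-sublattice of even-sum vectors, and $e_1 \in \Phi_s \setminus M$. In type $C_n$ dually $L(\Phi) = D_n$ and $M = 2\ZZ^n$, with $e_1 + e_2 \in \Phi_s \setminus M$. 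In type $F_4$ the long roots generate $D_4$, and doubling any short root produces either $2 e_i$ or $\pm e_1 \pm e_2 \pm e_3 \pm e_4$, both of even coordinate sum and hence in $D_4$; so $M = D_4$, a proper sublattice of the $F_4$-lattice $L(\Phi)$, and once more $e_1 \in \Phi_s \setminus M$.

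The only real obstacle is the $F_4$-bookkeeping, where one has to keep track of both the integer and the half-integer coordinate short roots and verify that doubling each lands in $D_4$; everything else reduces to routine arithmetic in lattice coordinates.
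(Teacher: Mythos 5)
Your proposal is correct: the reduction to $M=\delta L(\Phi_s)+L(\Phi_l)$, the divisibility argument via $\phi_\gamma$ for $G_2$, and the coordinate computations identifying $M$ as the $D_n$-lattice (type $B_n$), $2\ZZ^n$ (type $C_n$) and $D_4$ (type $F_4$) with an explicit short root outside $M$ are all sound, and together with the classification of non-simply-laced irreducible crystallographic root systems they establish the strict inclusion. However, your route is genuinely different from the one in the paper. The paper gives a single, type-free argument: it assumes some short root $\alpha$ can be written as $\alpha=\sum_i \delta\alpha_i+\sum_j\beta_j$ with $\alpha_i\in\Phi_s$, $\beta_j\in\Phi_l$ (all coefficients absorbed into repetitions), expands $(\alpha\mid\alpha)$, and divides by the common long-root length $(\beta_r\mid\beta_r)$; integrality of the Cartan numbers makes every term on the right an integer, while the left-hand side equals $1/\delta\notin\ZZ$, a contradiction. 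That argument needs neither the classification nor coordinate realizations and treats $\delta=2$ and $\delta=3$ uniformly, showing at one stroke that \emph{no} short root lies in the span. Your approach trades this uniformity for explicitness: the functional $\phi_\gamma$ isolates exactly why $\delta=3$ is easy (and why the congruence is vacuous for $\delta=2$), and the case analysis actually computes the sublattice $M$ in each type, which is more information than the lemma asks for, at the cost of the $F_4$ bookkeeping and reliance on Bourbaki's planches.
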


\begin{proof}
We show that no short root is contained in $\spanz(\{ \delta \alpha \mid \alpha \in \Phi_s \} \cup \Phi_{\ell})$. Therefore let $\alpha \in \Phi_s$ be arbitrary. Assume that there exist $\alpha_1, \ldots, \alpha_n \in \Phi_s$ and $\beta_1 , \ldots , \beta_m \in \Phi_{\ell}$ such that 
$$\alpha = \sum_{i=1}^n \delta \alpha_i + \sum_{j=1}^m \beta_j.$$
Here we allow $\alpha_i = \alpha_j$ as well as $\beta_i = \beta_j$ for $i \neq j$ so that all coefficients in the above linear combination are equal to one. Thus we have 
\begin{align} \label{equ:a1a2}
(\alpha \mid \alpha) & = \sum_i \delta^2 (\alpha_i \mid \alpha_i) + \sum_{i < k} 2 \delta^2 (\alpha_i \mid \alpha_k) + \sum_i \sum_j 2\delta (\alpha_i \mid \beta_j) + \sum_j (\beta_j \mid \beta_j) +\sum_{j <l} 2 (\beta_j \mid \beta_l).
\end{align}
Let $\alpha' \in \Phi_s, \beta' \in \Phi_{\ell}$, thus $\delta=\frac{(\beta' \mid \beta')}{(\alpha' \mid \alpha')}$. Let $r \in \{ 1, \ldots, m \}$, then
\begin{align} \label{equ:a1a3}
\frac{2 \delta^2 (\alpha_i \mid \alpha_k)}{(\beta_r \mid \beta_r)} = \frac{2 \delta (\beta' \mid \beta')(\alpha_i \mid \alpha_k)}{(\alpha' \mid \alpha')(\beta_r \mid \beta_r)} = 
\delta \cdot \underbrace{\frac{2 (\alpha_i \mid \alpha_k)}{(\alpha' \mid \alpha')}}_{\in \ZZ} \in \ZZ.
\end{align}
Note that this is true for $\alpha_i = \alpha_k$ as well as for $\alpha_i \neq \alpha_k$. Therefore, if we divide equation (\ref{equ:a1a2}) by $(\beta_r \mid \beta_r)$, we obtain
\begin{align*}
\frac{1}{\delta} = \frac{(\alpha \mid \alpha)}{(\beta_r \mid \beta_r)} & = \sum_i \underbrace{\delta^2 \frac{(\alpha_i \mid \alpha_i)}{(\beta_r \mid \beta_r)}}_{=\delta } + \sum_{i < k} \underbrace{\frac{2 \delta^2 (\alpha_i \mid \alpha_k)}{(\beta_r \mid \beta_r)}}_{\in \ZZ ~\text{by (\ref{equ:a1a3})}} + \sum_i \sum_j \delta \cdot \underbrace{\frac{2 (\alpha_i \mid \beta_j)}{(\beta_r \mid \beta_r)}}_{\in \ZZ}\\
{} & + \sum_j \underbrace{\frac{(\beta_j \mid \beta_j)}{(\beta_r \mid \beta_r)}}_{=1} +\sum_{j <l} \underbrace{\frac{2 (\beta_j \mid \beta_l)}{(\beta_r \mid \beta_r)}}_{\in \ZZ}.
\end{align*}
Hence the right hand side of this equation is an integer, while the left hand side is not; a contradiction. 
\end{proof}


\begin{Lemma} \label{le:InvSublattice}
With the assumptions as in Lemma \ref{le:PropSublattice} we have that 
$$
L':=\spanz(\{ \delta \alpha^{\vee} \mid \alpha \in \Phi_{\ell} \} \cup \{ \alpha^{\vee} \mid \alpha \in \Phi_s \})
$$ is a proper sublattice of $L(\Phi^{\vee})$ and $w(\lambda) \in L' ~\text{for all } w \in W_0 ~\text{and for all } \lambda \in L'$.
\end{Lemma}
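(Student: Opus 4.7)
The plan is to reduce the invariance statement to checking it on a generating set of $L'$ under reflections. Since $W_0 = W_{\Phi}$ is generated by the reflections $s_\beta$ for $\beta \in \Phi$, and since $L'$ is the $\ZZ$-span of
$$\mathcal{G} := \{\delta \alpha^{\vee} \mid \alpha \in \Phi_l\} \cup \{\alpha^{\vee} \mid \alpha \in \Phi_s\},$$
it suffices to show that $s_\beta(x) \in L'$ for every $\beta \in \Phi$ and every $x \in \mathcal{G}$.

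The key ingredient I would invoke is the identity $s_\beta(\alpha^{\vee}) = s_\beta(\alpha)^{\vee}$, valid for all $\alpha, \beta \in \Phi$. This follows from the fact that $s_\beta \in O(V)$ preserves the bilinear form, so $(s_\beta(\alpha) \mid s_\beta(\alpha)) = (\alpha \mid \alpha)$, giving
$$s_\beta(\alpha)^{\vee} = \frac{2\, s_\beta(\alpha)}{(s_\beta(\alpha) \mid s_\beta(\alpha))} = s_\beta\!\left(\frac{2\alpha}{(\alpha \mid \alpha)}\right) = s_\beta(\alpha^{\vee}).$$
The same isometry property shows that $s_\beta$ preserves the squared norm of each root, so the partition $\Phi = \Phi_s \sqcup \Phi_l$ is $W_0$-stable.

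Combining these two facts finishes the argument. For $\alpha \in \Phi_s$ we obtain $s_\beta(\alpha^{\vee}) = s_\beta(\alpha)^{\vee}$ with $s_\beta(\alpha) \in \Phi_s$, hence $s_\beta(\alpha^{\vee}) \in L'$ by definition of $L'$. For $\alpha \in \Phi_l$ we obtain $s_\beta(\delta \alpha^{\vee}) = \delta\, s_\beta(\alpha)^{\vee}$ with $s_\beta(\alpha) \in \Phi_l$, which again lies in $L'$. Since $L'$ is closed under $\ZZ$-linear combinations, this extends to all of $L'$, and since the $s_\beta$ generate $W_0$, it extends to all of $W_0$. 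There is no substantive obstacle here: the lemma is essentially a restatement of the fact that the short/long decomposition of $\Phi$ is $W_0$-equivariant, transported to the coroot side via the identity $s_\beta(\alpha^{\vee}) = s_\beta(\alpha)^{\vee}$.
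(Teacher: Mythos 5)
Your proof is correct and follows essentially the same route as the paper: check invariance on the generators of $L'$, use that $W_0$ acts by isometries so $w(\alpha^{\vee}) = w(\alpha)^{\vee}$ and the short/long partition of $\Phi$ is preserved, and conclude by $\ZZ$-linearity. Your extra reduction to the generating reflections $s_\beta$ is harmless but unnecessary, since the argument works verbatim for an arbitrary $w \in W_0$.
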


\begin{proof}
If we consider in the situation of Lemma \ref{le:PropSublattice} the dual root system $\Phi^{\vee}$ instead of $\Phi$, then we obtain that $L'$ is a proper sublattice of $L(\Phi^{\vee})$. It is sufficient to show the remaining assertion for the generators of $L'$. If $\alpha \in \Phi_{\ell}$, then $w(\delta \alpha^{\vee})= \delta w(\alpha^{\vee}) = \delta w(\alpha)^{\vee}$. Furthermore we have $w(\alpha) \in \Phi_{\ell}$ and therefore $w(\delta \alpha^{\vee}) \in L'$. Similarly, we obtain $w(\alpha^{\vee})= w(\alpha)^{\vee} \in \Phi_s$ if $\alpha \in \Phi_s$. 
\end{proof}



\begin{Proposition} \label{prop:ShortRootWeak}
Let $\Phi$ be an irreducible crystallographic root system of rank $n$ which is not simply-laced. If there exist roots $\alpha_1 , \ldots , \alpha_n \in \Phi$ such that 
$$W_{a, \Phi} = \langle s_{\alpha_1}, \ldots , s_{\alpha_n}, s_{\alpha_n,1} \rangle,$$
then $\alpha_n \in \Phi_{\ell}$. 
\end{Proposition}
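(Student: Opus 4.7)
The plan is to argue by contradiction. Assume $\alpha_n \in \Phi_s$ and set $G := \langle s_{\alpha_1}, \ldots, s_{\alpha_n}, s_{\alpha_n,1}\rangle$; I will show that the translation subgroup of $G$ is too small for $G$ to equal $W$. Since $s_{\alpha_1}, \ldots, s_{\alpha_n} \in W_0$ generate $W_0$, we already have $W_0 \leq G$, and Lemma \ref{le:CalcAffine}(a) gives $s_{\alpha_n, 1} s_{\alpha_n} = \TR(\alpha_n^\vee) \in G$. By Lemma \ref{le:CalcAffine2}(c), conjugation of $\TR(\alpha_n^\vee)$ by any $w \in W_0$ yields $\TR(w(\alpha_n)^\vee) \in G$.

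Next I would pin down the translation lattice $M := \{\lambda \in L(\Phi^\vee) : \TR(\lambda) \in G\}$ exactly. Setting $M_0 := \spanz(W_0 \cdot \alpha_n^\vee)$, the previous step shows $M_0 \subseteq M$. For the reverse inclusion, note that $M_0$ is $W_0$-stable by construction, so the semidirect product $W_0 \ltimes \TR(M_0)$ is a subgroup of $W$ containing every listed generator of $G$; hence $G \subseteq W_0 \ltimes \TR(M_0)$, and intersecting with $\Tr(V)$ using the semidirect decomposition of Theorem \ref{thm:StructureAffineWeylGroup}(a) gives $M = M_0$.

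Finally, the assumption $\alpha_n \in \Phi_s$ traps $M_0$ inside a proper sublattice of $L(\Phi^\vee)$. Let $L' := \spanz(\{\delta \alpha^\vee : \alpha \in \Phi_l\} \cup \{\alpha^\vee : \alpha \in \Phi_s\})$. By the remark following Lemma \ref{le:PropSublattice}, $L' \subsetneq L(\Phi^\vee)$, and by Lemma \ref{le:InvSublattice} the lattice $L'$ is $W_0$-stable. Since $\alpha_n \in \Phi_s$ gives $\alpha_n^\vee \in L'$, it follows that $W_0 \cdot \alpha_n^\vee \subseteq L'$, hence $M = M_0 \subseteq L' \subsetneq L(\Phi^\vee)$. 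But $G = W$ would force $M = L(\Phi^\vee)$ via Theorem \ref{thm:StructureAffineWeylGroup}(a); this contradiction yields $\alpha_n \in \Phi_l$.

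The most delicate point is the equality $M = M_0$: one must rule out that further translations slip into $G$ beyond those obtained from $W_0$-conjugates of $\TR(\alpha_n^\vee)$. The semidirect-product structure of Theorem \ref{thm:StructureAffineWeylGroup}(a) handles this cleanly, after which Lemma \ref{le:PropSublattice} and Lemma \ref{le:InvSublattice} supply all the remaining lattice-theoretic content.
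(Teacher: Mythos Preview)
Your proof is correct and follows essentially the same route as the paper: assume $\alpha_n$ is short, then trap the generated group inside the proper subgroup $W_0 \ltimes \Tr(L')$ using the $W_0$-stability of $L'$ (Lemma~\ref{le:InvSublattice}) together with its properness (the remark after Lemma~\ref{le:PropSublattice}). Your intermediate identification $M = \spanz(W_0 \cdot \alpha_n^\vee)$ is an extra step the paper omits --- it passes directly to $G \subseteq W_0 \ltimes L'$ without computing the translation lattice exactly --- but this does not alter the argument.
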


\begin{proof}
As before put $W_0 := W_{\Phi}$. Assume the root $\alpha_n$ to be short. By Lemma \ref{le:CalcAffine} we have $s_{\alpha_n}s_{\alpha_n ,1}= \TR(-\alpha_n^{\vee})$ and therefore 
$$
W_{a, \Phi} = \langle s_{\alpha_1}, \ldots , s_{\alpha_n}, \TR(\alpha_n^{\vee}) \rangle.
$$
Let $L'$ be the proper sublattice of $L(\Phi^{\vee})$ as defined in Lemma \ref{le:InvSublattice}. We obtain $\TR(\alpha_n^{\vee}) \in W_0 \ltimes L'$, thus $W_0 \ltimes L'= W_{a, \Phi}$. But by Lemma \ref{le:PropSublattice} we have that $L'$ is a proper sublattice of $L(\Phi^{\vee})$ and therefore $W_0 \ltimes L'$ is a proper subgroup of $W_0 \ltimes L(\Phi^{\vee})= W_{a, \Phi}$, a contradiction.


\end{proof}


\begin{Proposition} \label{prop:FiniteInAffine}
Let $\Phi$ be an irreducible crystallographic root system of rank $n$ and $W_0=W_{\Phi}$. If there exist roots $\{ \beta_1, \ldots , \beta_n \} \subseteq \Phi$ such that $W_0= \langle s_{\beta_1}, \ldots , s_{\beta_n} \rangle$, then for any integers $k_1, \ldots , k_n \in \ZZ$ we have
$$ W_0 \cong \langle s_{\beta_1, k_1}, \ldots , s_{\beta_n, k_n} \rangle. $$
\end{Proposition}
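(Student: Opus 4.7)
The plan is to realise the subgroup $H := \langle s_{\beta_1, k_1}, \ldots, s_{\beta_n, k_n} \rangle$ as a conjugate of $W_0$ inside the ambient affine group $\Aff(V)$. The key observation is that the $n$ affine hyperplanes $H_{\beta_1,k_1}, \ldots, H_{\beta_n, k_n}$ possess a common point $x_0 \in V$; conjugating by the translation sending $x_0$ to the origin will then simultaneously send each affine reflection $s_{\beta_i, k_i}$ to the linear reflection $s_{\beta_i}$, so that $H$ becomes $W_0$.

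First I would establish that $\{\beta_1, \ldots, \beta_n\}$ is linearly independent. Indeed, if the $\beta_i$'s spanned a proper subspace $U \subsetneq V$, then each $s_{\beta_i}$ would fix $U^\perp$ pointwise and hence so would $W_0 = \langle s_{\beta_1}, \ldots, s_{\beta_n}\rangle = W_\Phi$; but $\Phi$ spans $V$, so $W_\Phi$ has no nonzero pointwise-fixed vector, a contradiction. Since $(-\mid-)$ is non-degenerate, the linear system $(x_0 \mid \beta_i) = k_i$ for $1 \le i \le n$ then admits a unique solution $x_0 \in V$, which is precisely the common intersection point of the hyperplanes $H_{\beta_i, k_i}$.

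Next I would apply the conjugation formula
$$\TR(-x_0) \, s_{\beta_i, k_i} \, \TR(x_0) \;=\; s_{\beta_i,\, k_i - (x_0 \mid \beta_i)} \;=\; s_{\beta_i, 0} \;=\; s_{\beta_i},$$
which holds for arbitrary $\lambda \in V$ by the same direct computation that proves Lemma~\ref{le:CalcAffine2}(b) (the restriction to $\lambda \in P(\Phi^\vee)$ there is only used to ensure that $\TR(\lambda)$ normalises $W_a$, and plays no role in the identity itself). Taking the closure under products yields
$$\TR(-x_0)\, H \,\TR(x_0) \;=\; \langle s_{\beta_1}, \ldots, s_{\beta_n}\rangle \;=\; W_0,$$
exhibiting $H$ and $W_0$ as conjugate subgroups of $\Aff(V)$, hence isomorphic.

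There is no serious obstacle here: once the linear-independence step is in hand, the argument is a one-line conjugation. The only subtlety worth flagging is that the conjugating translation $\TR(-x_0)$ generally does not lie in the affine Weyl group $W$ itself, since $x_0$ need not belong to $L(\Phi^\vee)$; but this is irrelevant, because we only need a conjugating element in the larger group $\Aff(V)$ to conclude the asserted isomorphism $\langle s_{\beta_1, k_1}, \ldots, s_{\beta_n, k_n} \rangle \cong W_0$.
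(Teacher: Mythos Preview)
Your argument is correct and in fact establishes something slightly stronger than the statement: the group $H$ is conjugate to $W_0$ inside $\Aff(V)$, not merely abstractly isomorphic to it.

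The paper takes a different route. It also begins with the linear independence of $\beta_1,\ldots,\beta_n$ and the resulting common intersection point of the hyperplanes, but instead of conjugating by the translation to that point, it invokes \cite[Ch.~V, \S3, Prop.~4]{Bou02} to conclude that $W' = \langle s_{\beta_1,k_1},\ldots,s_{\beta_n,k_n}\rangle$ is finite. It then considers the restriction of the projection $p\colon W \to W_0$, $s_{\alpha,k}\mapsto s_\alpha$, to $W'$: this restriction is surjective onto $W_0$ by hypothesis, and it is injective because any element of its kernel has trivial linear part, hence is a translation, hence is the identity since $W'$ contains no nontrivial translations (being finite). The first isomorphism theorem then gives $W'\cong W_0$.

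Your approach is more direct and more geometric; it avoids the appeal to Bourbaki and the first isomorphism theorem, and it yields the conjugacy statement for free. The paper's approach, on the other hand, stays entirely within the group $W$ and uses the projection $p$ that is already central to the rest of the paper. It is worth noting that the paper does use exactly your conjugation trick later (Proposition~\ref{prop:ShortRoot} and Lemma~\ref{le:Stefan}), but there with the extra care that the intersection point lies in $P(\Phi^\vee)$, so that conjugation preserves $W$; here, as you correctly observe, that refinement is unnecessary since only the abstract isomorphism is claimed.
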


\begin{proof}
Since the roots $\beta_1, \ldots , \beta_n$ have to be linearly independent, the corresponding hyperplanes intersect in one point. Therefore the group $W':=\langle s_{\beta_1, k_1}, \ldots , s_{\beta_n, k_n} \rangle$ is finite by \cite[Ch.V, 3, Prop. 4]{Bou02}. We consider again the map $p: W_{a, \Phi} \rightarrow W_0, s_{\alpha, k} \mapsto s_{\alpha}$. Let $w = s_{\alpha_{i_1},k_{i_1}} \cdots s_{\alpha_{i_m}, k_{i_m}} \in \ker(p_{\vert W'})$, thus $p(w)= s_{\alpha_{i_1}} \cdots s_{\alpha_{i_m}} = e$. Considering $w$ in its normal form $w= w_0 \TR(\lambda)$ with $w_0 \in W_0$ and $\lambda \in L(\Phi^{\vee})$, we have $w_0 = p(w) = e$. Hence $w$ has to be a translation. Therefore $\lambda = 0$ since $W'$ is finite. Thus $w = e$. By the first isomorphism theorem we obtain $W_0 \cong W'$. 
\end{proof}


\begin{Proposition} \label{prop:ShortRoot}
Let $\Phi$ be an irreducible crystallographic root system of rank $n$ which is not simply-laced. If there exist roots $\alpha_1 , \ldots , \alpha_n \in \Phi$ and integers $k_1, \ldots, k_{n-1} \in \ZZ$ such that 
$$W_{a, \Phi} = \langle s_{\alpha_1,k_1}, \ldots , s_{\alpha_{n-1}, k_{n-1}} , s_{\alpha_n}, s_{\alpha_n,1} \rangle,$$
then $\alpha_n \in \Phi_{\ell}$. 
\end{Proposition}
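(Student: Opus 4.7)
The plan is to reduce the statement directly to Proposition \ref{prop:ShortRootWeak} by a conjugation trick that normalizes all the heights $k_i$ simultaneously to zero.

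First, I would apply the projection $p \colon W \to W_0$, $s_{\alpha,k} \mapsto s_\alpha$, to the hypothesis, obtaining $W_0 = \langle s_{\alpha_1},\ldots,s_{\alpha_n}\rangle$. By the equivalence (c)$\Leftrightarrow$(d) of Theorem \ref{PropVoigt}, this forces $\alpha_1,\ldots,\alpha_n$ to be linearly independent and $L(\Phi) = \spanz(\alpha_1,\ldots,\alpha_n)$.

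Next, I would construct a particular $\xi \in P(\Phi^\vee)$ that records every height $k_i$. Since the $\alpha_i$ form an $\mathbb{R}$-basis of $V$, the $n$ equations $(\xi\mid\alpha_i)=k_i$ for $i\le n-1$ and $(\xi\mid\alpha_n)=0$ uniquely determine some $\xi\in V$. Writing an arbitrary $\alpha\in\Phi$ as a $\mathbb{Z}$-linear combination of the $\alpha_i$ (which is possible by the spanning property just established) shows that $(\xi\mid\alpha)\in\mathbb{Z}$, hence $\xi\in P(\Phi^\vee)$.

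Then I would conjugate the given generating set by $\TR(-\xi)$. The formula $\TR(-\xi)\,s_{\alpha,k}\,\TR(\xi) = s_{\alpha,\,k-(\xi\mid\alpha)}$ from Lemma \ref{le:CalcAffine2}(b) is precisely what is needed to kill each $k_i$: for $i\le n-1$, the generator $s_{\alpha_i,k_i}$ becomes $s_{\alpha_i}$, while $s_{\alpha_n}$ and $s_{\alpha_n,1}$ are unchanged because $(\xi\mid\alpha_n)=0$. Since $\xi\in P(\Phi^\vee)$ normalizes $W$ (same lemma), the conjugated group is again $W$, giving
\[
W = \langle s_{\alpha_1},\ldots,s_{\alpha_{n-1}},s_{\alpha_n},s_{\alpha_n,1}\rangle.
\]
This is exactly the hypothesis of Proposition \ref{prop:ShortRootWeak}, which delivers $\alpha_n\in\Phi_l$.

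The only genuinely non-routine step is verifying $\xi\in P(\Phi^\vee)$, which rests on Theorem \ref{PropVoigt} upgrading the qualitative fact that the $s_{\alpha_i}$ generate $W_0$ to the integral identity $L(\Phi) = \spanz(\alpha_1,\ldots,\alpha_n)$. Everything else is formal calculation using Lemma \ref{le:CalcAffine2}(b) and the observation that conjugation by an element of the coweight lattice preserves $W$ while acting tangibly on the heights of affine reflections.
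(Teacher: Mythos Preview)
Your proof is correct and follows essentially the same route as the paper: both arguments use Theorem~\ref{PropVoigt} to see that $\alpha_1,\ldots,\alpha_n$ is a $\ZZ$-basis of $L(\Phi)$, pick an element of the coweight lattice with prescribed pairings against the $\alpha_i$, conjugate by the corresponding translation via Lemma~\ref{le:CalcAffine2}(b) to kill all the heights $k_i$, and then invoke Proposition~\ref{prop:ShortRootWeak}. The only differences are notational (you call the element $\xi$ and conjugate by $\TR(-\xi)$, the paper calls it $\lambda$ with the opposite sign convention).
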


\begin{proof}
First of all note that $W_{\Phi}=p(W_{a, \Phi})=\langle s_{\alpha_1}, \ldots, s_{\alpha_n} \rangle$. Therefore the roots $\alpha_1 , \ldots , \alpha_n$ are linearly independent. Hence for fixed $\ell_1, \ell_2 , \ldots, \ell_n \in \ZZ$, the system of equations
\begin{align*}
(\lambda \mid \alpha_1) & = \ell_1\\
(\lambda \mid \alpha_2) & = \ell_2\\
{}                       & \vdots\\
(\lambda \mid \alpha_n) & = \ell_n,
\end{align*}
has a unique solution $\lambda \in V$. By Theorem \ref{PropVoigt} the roots $\alpha_1, \ldots, \alpha_n$ are a basis of $L(\Phi)$. Since $(\lambda \mid \alpha_i) = \ell_i \in \ZZ$ for all $1 \leq i \leq n$, we have by definition $\lambda \in P(\Phi^{\vee})$. Thus we obtain by part $(b)$ of Lemma \ref{le:CalcAffine2}:
\begin{align*}
\langle s_{\alpha_1,k_1}, \ldots , s_{\alpha_{n-1}, k_{n-1}} , s_{\alpha_n}, s_{\alpha_n,1} \rangle
 = & \TR(\lambda) \langle s_{\alpha_1,k_1}, \ldots , s_{\alpha_{n-1}, k_{n-1}} , s_{\alpha_n}, s_{\alpha_n,1} \rangle \TR(- \lambda)\\
{} = & \langle s_{\alpha_1,k_1+\ell_1}, \ldots , s_{\alpha_{n-1}, k_{n-1}+\ell_{n-1}} , s_{\alpha_n, \ell_n}, s_{\alpha_n,1+ \ell_n} \rangle.
\end{align*}
If we choose $\ell_i=-k_i$ for $1 \leq i \leq n-1$ and $\ell_n=0$, the assertion follows by Proposition \ref{prop:ShortRootWeak}. 
\end{proof}

\begin{Lemma} \label{le:Stefan}
Let $\Phi$ be an irreducible crystallographic root system of rank $n$. If 
$$W_{a, \Phi} = \langle s_{\alpha_1, \ell_1}, \ldots, s_{\alpha_{n-1}, \ell_{n-1}}, s_{\alpha_{n}, \ell_{n}}, s_{\alpha_{n}, \ell_{n+1}} \rangle$$
for roots $\alpha_1, \ldots, \alpha_n \in \Phi$ and integers $\ell_1, \ldots, \ell_n, \ell_{n+1} \in \ZZ$, then $\vert \ell_{n+1} - \ell_{n} \vert =1$.
\end{Lemma}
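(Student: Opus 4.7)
The plan is to use a translation conjugation to simplify the generating set, identify the translation subgroup of the resulting group, and finally show that the $W_0$-orbit of $\alpha_n^\vee$ spans the full coroot lattice $L(\Phi^\vee)$.

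Set $k := l_{n+1}-l_n$; the goal is $|k|=1$. Applying the projection $p: W \to W_0$ to the given generating set shows $W_0 = p(W) = \langle s_{\alpha_1},\ldots,s_{\alpha_n}\rangle$, so by Theorem~\ref{PropVoigt} the roots $\alpha_1,\ldots,\alpha_n$ form a $\ZZ$-basis of $L(\Phi)$. There is a unique $\lambda \in V$ with $(\lambda \mid \alpha_i) = -l_i$ for $1 \le i \le n$, and this $\lambda$ lies in $P(\Phi^\vee)$ because every $\alpha \in \Phi$ is an integer combination of the $\alpha_i$. Conjugating the generators of $W$ by $\TR(\lambda)$ via Lemma~\ref{le:CalcAffine2}(b) brings the generating set to the form
$$W=\langle s_{\alpha_1},\ldots,s_{\alpha_n}, s_{\alpha_n,k}\rangle,$$
and Lemma~\ref{le:CalcAffine}(c) then rewrites this as $W = \langle W_0, \TR(k\alpha_n^\vee)\rangle$.

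I then identify the translation part of $\langle W_0, \TR(k\alpha_n^\vee)\rangle$. By Lemma~\ref{le:CalcAffine2}(c), $w\TR(k\alpha_n^\vee)w^{-1} = \TR(kw(\alpha_n)^\vee)$ for every $w \in W_0$, so setting
$$L_n := \spanz\{w(\alpha_n)^\vee : w \in W_0\} \subseteq L(\Phi^\vee),$$
the subgroup $\langle W_0, \TR(k\alpha_n^\vee)\rangle$ coincides with $W_0 \ltimes kL_n$. Matching this with $W = W_0 \ltimes L(\Phi^\vee)$ yields the key equality $kL_n = L(\Phi^\vee)$. Because $L_n$ is a full-rank sublattice of $L(\Phi^\vee)$, a comparison of indices forces $L_n = L(\Phi^\vee)$ and $|k|=1$ jointly, so everything reduces to establishing $L_n = L(\Phi^\vee)$.

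If $\Phi$ is simply-laced, then $W_0$ acts transitively on $\Phi$ and $L_n = \spanz \Phi^\vee = L(\Phi^\vee)$. Otherwise, I first rule out $\alpha_n$ being short: were $\alpha_n \in \Phi_s$, then $W_0(\alpha_n) \subseteq \Phi_s$, so $L_n \subseteq \spanz\{\alpha^\vee : \alpha \in \Phi_s\}$, which is contained in the proper sublattice of $L(\Phi^\vee)$ provided by Lemma~\ref{le:PropSublattice} and the remark following it, contradicting $kL_n = L(\Phi^\vee)$. Hence $\alpha_n$ is long. To identify $L_n$ with $L(\Phi^\vee)$ in this case, I apply the very same translation analysis to the canonical generating set given by Theorem~\ref{thm:StructureAffineWeylGroup}: $W = \langle W_0, s_{\widetilde{\alpha},1}\rangle$, and Lemma~\ref{le:CalcAffine}(b) rewrites this as $W = \langle W_0, \TR(\widetilde{\alpha}^\vee)\rangle$. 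Repeating the preceding argument verbatim gives $\spanz\{w(\widetilde{\alpha})^\vee : w \in W_0\} = L(\Phi^\vee)$. Since the long roots of an irreducible root system form a single $W_0$-orbit, $W_0(\alpha_n) = W_0(\widetilde{\alpha})$, so $L_n$ coincides with this lattice and equals $L(\Phi^\vee)$.

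The main obstacle is this last step: identifying $L_n$ with $L(\Phi^\vee)$ when $\alpha_n$ is long in the non-simply-laced case. Rather than a direct structural argument or case-by-case verification, the key trick I envisage is to bootstrap from the canonical affine generating set $W = \langle W_0, \TR(\widetilde{\alpha}^\vee)\rangle$: the translation-subgroup argument then transports automatically from $\widetilde{\alpha}$ to $\alpha_n$ through the single long-root $W_0$-orbit.
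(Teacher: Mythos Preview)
Your argument is correct, but you have done more work than necessary, and your route differs from the paper's.

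The paper, after the same conjugation by $\TR(\lambda)$, rewrites $W=\langle s_{\alpha_1},\ldots,s_{\alpha_n},s_{\alpha_n,l}\rangle$ with $l=l_n-l_{n+1}$ and then invokes Dyer's description of the reflections of a reflection subgroup: every reflection of $W$ is a $W$-conjugate of one of the $n+1$ generators, and Lemma~\ref{le:AffConj} shows that any such conjugate $s_{\alpha,k}$ has $k$ divisible by $l$. Since $T$ contains $s_{\alpha,1}$, this forces $|l|\le 1$, and $l=0$ is excluded because $W$ cannot be generated by $n$ reflections.

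Your approach replaces the appeal to Dyer by a direct computation of the translation lattice of $\langle W_0,\TR(k\alpha_n^\vee)\rangle$, which is cleaner and more self-contained. However, you misidentify the ``main obstacle'': once you have $kL_n=L(\Phi^\vee)$ with $L_n\subseteq L(\Phi^\vee)$ a full-rank sublattice (immediate from the irreducibility of the $W_0$-action on $V$), the index computation $[L(\Phi^\vee):kL_n]=|k|^n\cdot[L(\Phi^\vee):L_n]=1$ already yields $|k|=1$ \emph{and} $L_n=L(\Phi^\vee)$ simultaneously. Your last two paragraphs---the case distinction simply-laced versus not, the exclusion of short $\alpha_n$, and the bootstrap via $\widetilde{\alpha}$---are therefore entirely superfluous. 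The proof is complete the moment you observe that $L_n$ has full rank.
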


\begin{proof}
Consider the subgroup $U:=  \langle s_{\alpha_1, \ell_1}, \ldots, s_{\alpha_{n-1}, \ell_{n-1}}, s_{\alpha_{n}, \ell_{n}} \rangle $ of $W_{a, \Phi}$. By Lemma \ref{le:CalcAffine} we have $s_{\alpha_n, \ell_n}s_{\alpha_n, \ell_{n+1}}=\TR((\ell_n - \ell_{n+1})\alpha_n^{\vee})$. Hence 
\begin{align} \label{equ:Stefan1}
W_{a, \Phi}= \langle U, \TR((\ell_n - \ell_{n+1})\alpha_n^{\vee}) \rangle =  \langle s_{\alpha_1, \ell_1}, \ldots, s_{\alpha_{n-1}, \ell_{n-1}}, s_{\alpha_{n}, \ell_{n}}, \TR((\ell_n - \ell_{n+1})\alpha_n^{\vee}) \rangle.
\end{align}
Using the same argument as in the proof of Proposition \ref{prop:ShortRoot}, we take $\lambda \in P(\Phi^{\vee})$ to be the solution of the system of equations $(\lambda \mid \alpha_i)  = -\ell_i$ ($1 \leq i \leq n$).
Thus by part $(b)$ of Lemma \ref{le:CalcAffine2} we obtain for $1 \leq i \leq n$:
$$
\TR(\lambda) s_{\alpha_i, \ell_i} \TR(-\lambda)= s_{\alpha_i, \ell_i+(\lambda \mid \alpha_i)} = s_{\alpha_i}.
$$
Since two translations commute, we obtain again by part $(b)$ of Lemma \ref{le:CalcAffine2} and by (\ref{equ:Stefan1}) that 
\begin{align} \label{equ:Stefan2}
W_{a, \Phi}  = \TR(\lambda) \langle s_{\alpha_1, \ell_1}, \ldots , s_{\alpha_{n}, \ell_{n}}, \TR((\ell_n - \ell_{n+1})\alpha_n^{\vee}) \rangle \TR(- \lambda)
= \langle s_{\alpha_1}, \ldots , s_{\alpha_{n}}, \TR((\ell_n - \ell_{n+1})\alpha_n^{\vee}) \rangle.
\end{align}
Put $\ell:=\ell_n- \ell_{n+1}$. By Lemma \ref{le:CalcAffine} we have $s_{\alpha_n, \ell} s_{\alpha_n} = \TR(\ell \alpha_n^{\vee})$ and therefore (\ref{equ:Stefan2}) yields
\begin{align} \label{equ:Stefan3}
W_{a, \Phi} = \langle s_{\alpha_1}, \ldots , s_{\alpha_{n}}, \TR(\ell \alpha_n^{\vee}) \rangle =\langle s_{\alpha_1}, \ldots , s_{\alpha_{n}}, s_{\alpha_n, \ell} \rangle.
\end{align}
Put $R:= \{ s_{\alpha_1}, \ldots , s_{\alpha_{n}}, s_{\alpha_n, \ell} \}$ and $T':= \cup_{w \in W} w R w^{-1}$. Since we can write each element of $W_{a, \Phi}$ as a product of elements in $R$, Lemma \ref{le:AffConj} yields that if $s_{\alpha, k } \in T'$ ($\alpha \in \Phi$, $k \in \ZZ$), then $k$ has to be a multiple of $\ell$. Using (\ref{equ:Stefan3}) we obtain by \cite[Corollary 3.11]{Dye90} that 
$$T'= T_a= \{s_{\alpha, k} \mid \alpha \in \Phi, ~k \in \ZZ \}.$$
Hence, if we assume that $\vert \ell \vert >1$, we arrive at a contradiction, because in this case we have $T' \subsetneq T_a$. Therefore $\vert \ell \vert \leq 1$ and \cite[Proposition 2.1]{BGRW17} yields $1 = \vert \ell \vert = \vert \ell_n - \ell_{n+1} \vert$.

\end{proof}

Theorem \ref{cor:ParWallLongRoot} is now a direct consequence of Proposition \ref{prop:ShortRoot} and Lemma \ref{le:Stefan}.

\section{Proof of the main result} \label{sec:ProofMainResult}

The aim of this section is to prove the Main Theorem \ref{th:themaintheorem1}. Therefore we fix for the rest of this section an affine irreducible dual Coxeter system $(W_a,T_a)$ of rank $n+1$ with $W_a= W_{a, \Phi}$ for some irreducible crystallographic root system $\Phi$ of rank $n$. Let $W_0:= W_{\Phi}$ be the underlying finite Coxeter group and put $T_0 := \{s_{\alpha} \mid \alpha \in \Phi \}$.

Let $w$ be a quasi-Coxeter element for $(W_a,T_a)$. We already noted in (\ref{equ:ThisIsQuasiCox}) in Section \ref{sec:QuasiCoxElInAffine} that the element $w':=p(w)$ is a parabolic quasi-Coxeter element for $(W_0,T_0)$ of absolute length $n-1$. One of our main tools to prove Theorem \ref{th:themaintheorem1}, that is, to prove that the Hurwitz action is transitive on the set $\Red_{T_a}(w)$, will be the investigation of the Hurwitz action on the set 
$$ \Fac_{T_0, n+1}(w')= \{ (t_1, \ldots , t_{n+1}) \in T_0^{n+1} \mid ~t_1 \cdots t_{n+1}=w',~\langle t_1, \ldots, t_{n+1} \rangle=W_0 \}.$$
Note that this set does not contain reduced reflection decompositions for $w'$. Consider the map 
$$ \pi: \Red_{T_a}(w) \rightarrow \Fac_{T_0,n+1}(w'), (r_1, \ldots, r_{n+1}) \mapsto (p(r_1), \ldots, p(r_{n+1})).$$
The three main steps to prove Theorem \ref{th:themaintheorem1} will be to show the following assertions:
\begin{itemize}
\item The map $\pi$ is well-defined.
\item The Hurwitz action is transitive on $\Fac_{T_0, n+1}(w')$.
\item For each $\underline{r}= (r_1, \ldots, r_{n-1}, r_n, r_n) \in \Fac_{T_0, n+1}(w')$ there exists a subgroup of the isotropy subgroup $\text{Stab}_{\mathcal{B}_{n+1}}(\underline{r})$ which acts transitively on the fibre $\pi^{-1}(\underline{r})$.  
\end{itemize}
 
\medskip
\begin{Proposition} \label{prop:WellDefined}
The map $\pi$ is well-defined, that is, if $(s_{\beta_1, k_1}, \ldots, s_{\beta_{n+1}, k_{n+1}}) \in \Red_{T_a}(w)$, then $\langle s_{\beta_1}, \ldots, s_{\beta_{n+1}} \rangle =W_0$. 
\end{Proposition}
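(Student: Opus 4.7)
The plan is to standardize an arbitrary reduced $T$-decomposition of $w$ via Hurwitz moves, compare it with a generating reduced $T$-decomposition of $w$ (which exists by the quasi-Coxeter hypothesis), and deduce the claim from a computation in the coroot lattice of $\Phi$.

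Given $(s_{\beta_1,k_1},\ldots,s_{\beta_{n+1},k_{n+1}}) \in \Red_T(w)$, its image under $p$ is a length-$(n+1)$ $T_0$-decomposition of $w'=p(w)$, which has absolute length $n-1$ by the remarks in Section~\ref{sec:QuasiCoxElInAffine}. Theorem~\ref{thm:LR2} produces a Hurwitz equivalence of the projected tuple to one with a repeated pair and a reduced decomposition of $w'$; since Hurwitz moves commute with $p$ (Lemma~\ref{prop:HurwitzMove}), the aligning braid lifts to $\mathcal{B}_{n+1}$ acting on $T^{n+1}$, and I may replace the original affine tuple by a Hurwitz equivalent one of the shape $(s_{\delta_1,m_1},\ldots,s_{\delta_{n-1},m_{n-1}},s_{\delta_n,m_n},s_{\delta_n,m_{n+1}})$ with the same product $w$ and with $(s_{\delta_1},\ldots,s_{\delta_{n-1}}) \in \Red_{T_0}(w')$. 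Because Hurwitz moves preserve generated subgroups, it suffices to prove $\langle s_{\delta_1},\ldots,s_{\delta_n}\rangle=W_0$; by Theorem~\ref{th:maintheorem3} applied to the parabolic quasi-Coxeter $w'$ in $(W_0,T_0)$, the first $n-1$ reflections generate the rank-$(n-1)$ parabolic subgroup $W_0'\leq W_0$ attached to $w'$, with root system $\Phi'$.

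I now use that some reduced $T$-decomposition of $w$ generates $W$, standardize it the same way, and use Theorem~\ref{th:maintheorem3} inside $\Red_{T_0}(w')$ to further align the two standardized tuples projectively on their first $n-1$ entries. This produces a parallel tuple $(s_{\gamma_1,l_1},\ldots,s_{\gamma_{n-1},l_{n-1}},s_{\gamma_n,l_n},s_{\gamma_n,l_{n+1}})$ with $\gamma_i=\delta_i$ for $i<n$ and $\langle s_{\gamma_1},\ldots,s_{\gamma_n}\rangle = p(W)=W_0$, and Corollary~\ref{cor:ParWallLongRoot} gives $|l_n-l_{n+1}|=1$ and, when $\Phi$ is non-simply-laced, $\gamma_n\in\Phi_l$. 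Equating the two expressions for the translation part $\lambda$ of $w=w'\TR(\lambda)$ coming from Lemma~\ref{le:AffineFac}, and absorbing the contributions of the first $n-1$ entries into $L((\Phi')^\vee)$ via Lemma~\ref{le:LinIndCoroots}, one obtains the lattice identity
\[
(m_n-m_{n+1})\,\delta_n^\vee - (l_n-l_{n+1})\,\gamma_n^\vee \in L((\Phi')^\vee).
\]
Since $\langle s_{\gamma_1},\ldots,s_{\gamma_n}\rangle=W_0$ has rank $n$ we have $\gamma_n\notin\spanr(\Phi')$; inspecting the translation part of the generating tuple then shows $\lambda\notin\spanr(\Phi')$, and the corresponding formula for the arbitrary standardized tuple forces $\delta_n\notin\spanr(\Phi')$ (so $s_{\delta_n}\notin W_0'$). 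Reducing the displayed identity in the infinite cyclic quotient $L(\Phi^\vee)/L((\Phi')^\vee)\cong\ZZ$ then yields a nonzero divisibility relation between the images of $\delta_n^\vee$ and $\pm\gamma_n^\vee$.

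The final and hardest step is to turn this numerical constraint into the geometric conclusion that $s_{\delta_n}$ is $W_0'$-conjugate to $s_{\gamma_n}$: once this is known, Theorem~\ref{prop:CharParSub} gives $\langle W_0',s_{\delta_n}\rangle=\langle W_0',s_{\gamma_n}\rangle=W_0$, finishing the proof. I plan to obtain the conjugacy by a case-by-case inspection of the irreducible finite Coxeter types of rank $n$ and their corank-one parabolic subgroups $W_0'\leq W_0$: for each case, one computes the possible images of coroots of $\Phi\setminus\Phi'$ in the cyclic quotient and checks that the divisibility relation, combined with $|l_n-l_{n+1}|=1$ and the long-root constraint on $\gamma_n$, forces $\delta_n$ to share $\gamma_n$'s length; the conclusion then follows from the fact that the long roots in $\Phi\setminus\Phi'$ form a single $W_0'$-orbit in each type, so $s_{\delta_n}$ and $s_{\gamma_n}$ are $W_0'$-conjugate.
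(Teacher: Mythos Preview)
Your setup and the core computation are right and match the paper: you standardize both tuples via Proposition~\ref{prop:SameRootsOrbit}, align the first $n-1$ projected entries using Theorem~\ref{th:maintheorem3}, compare translation parts via Lemma~\ref{le:AffineFac}, and invoke Corollary~\ref{cor:ParWallLongRoot} to get $|l_n-l_{n+1}|=1$ and $\gamma_n$ long. Up to here your argument is essentially the paper's.

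The divergence is in the last step. Your plan is to pass to the cyclic quotient $L(\Phi^\vee)/L((\Phi')^\vee)$, extract a divisibility relation, and then run a type-by-type check culminating in the claim that ``the long roots in $\Phi\setminus\Phi'$ form a single $W_0'$-orbit in each type''. That claim is false. Take $\Phi$ of type $D_4$ and $W_0'=\langle s_{\alpha_1},s_{\alpha_3},s_{\alpha_4}\rangle$ (the parabolic of type $A_1^3$ obtained by removing the branch node $\alpha_2$); this case does occur for an affine quasi-Coxeter element. All roots are long, yet the highest root $\widetilde\alpha$ is orthogonal to $\alpha_1,\alpha_3,\alpha_4$, hence fixed by $W_0'$, so $\{\pm\widetilde\alpha\}$ is a $W_0'$-orbit disjoint from the orbit of $\alpha_2$. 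Your divisibility relation does rule out $\widetilde\alpha$ here (its coroot has image $2$), but your stated plan separates the argument into ``divisibility forces the length'' followed by ``equal-length roots outside $\Phi'$ form one orbit'', and the second assertion fails. A correct case-by-case would have to show instead that the coroots with image $\pm 1$ in the quotient form a single $W_0'$-orbit of reflections, which is more delicate and not what you wrote.

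More to the point, the detour is unnecessary. Your displayed identity together with $|l_n-l_{n+1}|=1$ says precisely that
\[
\gamma_n^\vee\in L((\Phi')^\vee)+\ZZ\,\delta_n^\vee=L(\delta_1^\vee,\ldots,\delta_n^\vee),
\]
and since $\gamma_n$ is long one gets $\gamma_n=\tfrac{(\gamma_n\mid\gamma_n)}{2}\gamma_n^\vee\in L(\delta_1,\ldots,\delta_n)$ by rescaling. As $\gamma_i=\delta_i$ for $i<n$ and $\langle s_{\gamma_1},\ldots,s_{\gamma_n}\rangle=W_0$, Theorem~\ref{PropVoigt} gives $L(\Phi)=L(\gamma_1,\ldots,\gamma_n)\subseteq L(\delta_1,\ldots,\delta_n)$ and likewise for coroots, whence $L(\delta_1,\ldots,\delta_n)=L(\Phi)$ and $L(\delta_1^\vee,\ldots,\delta_n^\vee)=L(\Phi^\vee)$. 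Another application of Theorem~\ref{PropVoigt} then yields $\langle s_{\delta_1},\ldots,s_{\delta_n}\rangle=W_0$ directly, with no case analysis and no appeal to Theorem~\ref{prop:CharParSub}. This is exactly how the paper finishes.
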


\begin{proof}
By equation (\ref{equ:AffineQuasi}) in Section \ref{sec:QuasiCoxElInAffine} there exists a reflection decomposition of $w$ of the form  
$$(s_{\gamma_1, \ell_1}, \ldots, s_{\gamma_{n-1}, \ell_{n-1}}, s_{\gamma_{n}, \ell_{n}}, s_{\gamma_{n+1}, \ell_{n+1}}),$$
where $\gamma_{n+1} = \gamma_{n}$ and $W_a= \langle s_{\gamma_1, \ell_1}, \ldots, s_{\gamma_{n}, \ell_{n}}, s_{\gamma_{n+1}, \ell_{n+1}} \rangle$. Let $(s_{\beta_1, k_1}, \ldots, s_{\beta_{n+1}, k_{n+1}})$ be an arbitrary element of $\Red_{T_a}(w)$ and put $w':=p(w)$. By Proposition \ref{prop:SameRootsOrbit} we can assume that $\beta_{n+1}=\beta_n$. We have
\begin{align*}
w_0:= w' s_{\gamma_{n}} & = \underbrace{s_{\gamma_1} \cdots s_{\gamma_{n-1}}}_{=w'} s_{\gamma_{n}}
= \underbrace{s_{\beta_1} \cdots s_{\beta_{n-1}}}_{=w'} s_{\gamma_{n}}.
\end{align*}
Since $\gamma_{n+1} = \gamma_{n}$, we have $W_0=p(W_a)=\langle s_{\gamma_1}, \ldots, s_{\gamma_n} \rangle$, that is, $w_0$ is quasi-Coxeter. Hence we have 
$$W_0= \langle s_{\gamma_1}, \ldots, s_{\gamma_{n}} \rangle = \langle s_{\beta_1}, \ldots, s_{\beta_{n-1}}, s_{\gamma_{n}} \rangle.$$ 
Since $w'$ is a parabolic quasi-Coxeter element, Theorem \ref{th:maintheorem3} yields
$$ (s_{\beta_1}, \ldots , s_{\beta_{n-1}}) \sim (s_{\gamma_1}, \ldots , s_{\gamma_{n-1}}).$$
Therefore we assume without loss of generality that $\gamma_i = \beta_i$ for $1 \leq i \leq n-1$. By the equivalence of $(b)$ and $(c)$ in Theorem \ref{PropVoigt} it remains to show:
\begin{itemize}
\item[(i)] $\gamma_{n} \in L(\beta_1, \ldots, \beta_{n}) = L(\gamma_1, \ldots, \gamma_{n-1}, \beta_{n})$;
\item[(ii)] $\gamma_{n}^{\vee} \in L(\beta_1^{\vee}, \ldots, \beta_{n}^{\vee})= L(\gamma_1^{\vee}, \ldots, \gamma_{n-1}^{\vee}, \beta_{n}^{\vee})$.
\end{itemize} 
Writing $w$ in its normal form as described in Lemma \ref{le:AffineFac}, we obtain 
\begin{align*}
w & = w' \TR \left(\sum_{i=1}^{n+1} -\ell_i s_{\gamma_{n+1}} \cdots s_{\gamma_{i+1}}(\gamma_i)^{\vee} \right)\\
{} & =  w' \TR \left( \sum_{i=1}^{n+1} -k_i s_{\beta_{n+1}} \cdots s_{\beta_{i+1}}(\beta_i)^{\vee} \right).
\end{align*}
Hence both translation parts must be equal. By Theorem \ref{cor:ParWallLongRoot} we have $\ell_n - \ell_{n+1} = \pm 1$ and $\gamma_n$ is a long root. Using the facts that $\gamma_i = \beta_i$ for $1 \leq i \leq n-1$ and $\gamma_{n+1}=\gamma_n$, $\beta_{n+1}=\beta_n$, we obtain
$$  \sum_{i=1}^{n-1} -\ell_i s_{\gamma_{n-1}} \cdots s_{\gamma_{i+1}}(\gamma_i)^{\vee} + \underbrace{(\ell_{n}-\ell_{n+1})}_{= \pm 1} \gamma_{n}^{\vee} = 
\sum_{i=1}^{n-1} -k_i s_{\gamma_{n-1}} \cdots s_{\gamma_{i+1}}(\gamma_i)^{\vee} + (k_{n}-k_{n+1})\beta_{n}^{\vee},$$
thus 
$$\pm \gamma_{n}^{\vee} = \sum_{i=1}^{n-1} (\ell_i-k_i) s_{\gamma_{n-1}} \cdots s_{\gamma_{i+1}}(\gamma_i)^{\vee} + (k_{n}-k_{n+1})\beta_{n}^{\vee}.$$
Statement (ii) follows by Lemma \ref{le:LinIndCoroots}.
It remains to show (i). The root $\gamma_{n+1}= \gamma_n$ is a long root. By (ii) we have 
$$\gamma_{n}^{\vee} = \sum_{i=1}^{n} \lambda_i \beta_i^{\vee}$$
for some integer coefficients $\lambda_i \in \ZZ$. Furthermore we have
\begin{equation*}
\frac{(\gamma_{n} \mid \gamma_{n})}{(\beta_i \mid \beta_i)} \in
\begin{cases} 
  \{ 1 \}  &\mbox{if } \beta_i ~\text{is long} \\
  \{ 2,3 \} & \mbox{if }  \beta_i ~\text{is short},
\end{cases}
\end{equation*}
see also Remark \ref{rem:RootRatio}. Therefore we obtain
$$\gamma_{n} = \sum_{i=1}^{n} \frac{(\gamma_{n} \mid \gamma_{n})}{(\beta_i \mid \beta_i)}\lambda_i \beta_i \in L(\beta_1, \ldots, \beta_{n}).$$
\end{proof}
 
\begin{Proposition} \label{prop:equi}
The map $\pi$ is equivariant with respect to the Hurwitz action.
\end{Proposition}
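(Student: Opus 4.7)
The plan is a short, direct verification: compare the two sides on a generator $\sigma_i$ of the braid group and use the explicit formula for the Hurwitz move in the affine case (Lemma \ref{prop:HurwitzMove}) together with the definition of the projection $p: s_{\alpha,k}\mapsto s_\alpha$.

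First I would fix an arbitrary reduced decomposition $\underline{r}=(s_{\beta_1,k_1},\ldots,s_{\beta_{n+1},k_{n+1}})\in\Red_T(w)$ and it suffices, since the braid group is generated by the $\sigma_i$, to check that $\pi(\sigma_i\cdot\underline{r})=\sigma_i\cdot\pi(\underline{r})$ for every $1\le i\le n$. By the definition of $\pi$ and of the Hurwitz action on $T_0^{n+1}$, the right-hand side equals
$$\bigl(s_{\beta_1},\ldots,s_{\beta_{i-1}},\; s_{\beta_i}s_{\beta_{i+1}}s_{\beta_i},\; s_{\beta_i},\; s_{\beta_{i+2}},\ldots,s_{\beta_{n+1}}\bigr)=\bigl(s_{\beta_1},\ldots,s_{\beta_{i-1}},\; s_{s_{\beta_i}(\beta_{i+1})},\; s_{\beta_i},\; s_{\beta_{i+2}},\ldots,s_{\beta_{n+1}}\bigr).$$

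For the left-hand side I would apply Lemma \ref{prop:HurwitzMove} to compute $\sigma_i\cdot\underline{r}$ explicitly, obtaining a tuple in which the $i$th entry is $s_{s_{\beta_i}(\beta_{i+1}),\,k_{i+1}-\frac{2(\beta_i\mid\beta_{i+1})}{(\beta_i\mid\beta_i)}k_i}$ and the $(i+1)$st entry is $s_{\beta_i,k_i}$, while all other entries are unchanged. Applying $\pi$ discards the integer subscripts, so the result is precisely the tuple displayed above. This establishes the equality on generators and hence on the whole braid group. Along the way one should also note that $\sigma_i\cdot\underline{r}$ still lies in $\Red_T(w)$ (the Hurwitz action preserves the product and the reduced-decomposition property) and that its image under $\pi$ lies in $\Fac_{T_0,n+1}(w')$, which is guaranteed by Proposition \ref{prop:WellDefined}, so both sides of the equivariance identity live in the correct target set.

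There is no real obstacle here; the statement is a formal consequence of matching the affine conjugation formula of Lemma \ref{prop:HurwitzMove} with the ordinary conjugation of reflections in $W_0$, and of the fact that $p$ is a group homomorphism. The only thing to be careful about is keeping track of the index-shift convention in $\sigma_i$ (the $i$th and $(i+1)$st entries are the ones affected), which I have done above.
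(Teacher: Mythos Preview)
Your proof is correct and follows essentially the same approach as the paper: both compute $\sigma_i$ on an affine tuple via Lemma~\ref{prop:HurwitzMove}, apply $\pi$ to drop the integer subscripts, and observe that the result agrees with $\sigma_i$ acting on the projected tuple in $W_0$. Your version is slightly more explicit about why both sides lie in the correct domain and target, but the argument is the same.
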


\begin{proof}
By Lemma \ref{le:AffConj} we have 
$$
\sigma_i (\ldots, s_{\alpha_i, k_i}, s_{\alpha_{i+1}, k_{i+1}}, \ldots) = 
\left(\ldots, s_{s_{\alpha_{i}}(\alpha_{i+1}),k_{i+1}-\frac{2(\alpha_i \mid \alpha_{i+1})}{(\alpha_{i} \mid \alpha_{i})}k_{i} }, s_{\alpha_i}, \ldots \right).
$$
Hence
$$
\pi(\sigma_i (\ldots, s_{\alpha_i, k_i}, s_{\alpha_{i+1}, k_{i+1}}, \ldots))= (\ldots, s_{s_{\alpha_{i}}(\alpha_{i+1})}, s_{\alpha_i}, \ldots) = \sigma_i (\ldots, s_{\alpha_i}, s_{\alpha_{i+1}}, \ldots).
$$
\end{proof}

\begin{theorem} \label{thm:HurwitzOnFac}
The Hurwitz action is transitive on $\Fac_{T_0,n+1}(w')$. 
\end{theorem}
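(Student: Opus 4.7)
The plan is to bring any factorization in $\Fac_{T_0,n+1}(w')$ to an ``almost reduced'' canonical form via Theorem \ref{thm:LR2}, match the reduced parts using Theorem \ref{th:maintheorem3}, and then swap the leftover doubled reflection using a short sliding argument together with Lemma \ref{le:Reduction} and the crystallographic uniqueness in Theorem \ref{prop:CharParSub}.

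For the normalization, note that $\ell_{T_0}(w')=n-1$ and we work with $m=n+1$ reflections, so the parity difference is $m-\ell_{T_0}(w')=2$. Theorem \ref{thm:LR2} then yields a Hurwitz-equivalent representative of the shape $(s,s,r_1,\ldots,r_{n-1})$ with $(r_1,\ldots,r_{n-1})\in\Red_{T_0}(w')$. Because $w'$ is a parabolic quasi-Coxeter element of $W_0$, Theorem \ref{th:maintheorem3} implies that the Hurwitz action is transitive on $\Red_{T_0}(w')$; since Hurwitz moves preserve the subgroup generated, the tail $(r_1,\ldots,r_{n-1})$ always generates the \emph{same} rank-$(n-1)$ parabolic subgroup $W_0'\leq W_0$, independent of the representative chosen. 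Consequently, given two normalized factorizations $(s,s,r_1,\ldots,r_{n-1})$ and $(s',s',r_1',\ldots,r_{n-1}')$, a braid supported on positions $3,\ldots,n+1$ takes the first tail to the second, so I may assume from now on that the tails coincide.

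After this reduction both tuples satisfy $\langle W_0',s\rangle=W_0=\langle W_0',s'\rangle$. Since $(W_0,T_0)$ is finite and crystallographic of rank $n$ and $W_0'$ is parabolic of rank $n-1$, Theorem \ref{prop:CharParSub} provides some $x\in W_0'$ with $s'=xsx^{-1}$. Using $s^2=1$, a direct computation shows that $\sigma_i\sigma_{i+1}$ applied to a triple $(s,s,r)$ sitting at positions $(i,i+1,i+2)$ produces $(r,s,s)$; iterating this slide, the doubled pair can be moved all the way to the right, yielding $(s,s,r_1,\ldots,r_{n-1})\sim(r_1,\ldots,r_{n-1},s,s)$. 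Lemma \ref{le:Reduction} (applied with $x\in W_0'=\langle r_1,\ldots,r_{n-1}\rangle$) then gives $(r_1,\ldots,r_{n-1},s,s)\sim(r_1,\ldots,r_{n-1},s',s')$, and sliding back restores the original shape as $(s',s',r_1,\ldots,r_{n-1})$, which finishes the argument.

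The genuine conceptual input is the crystallographic uniqueness clause in Theorem \ref{prop:CharParSub}: it is precisely what forces the two doubled reflections $s$ and $s'$ to be $W_0'$-conjugate, and hence to be connected via Lemma \ref{le:Reduction}. Without this ingredient, even after matching the reduced tails there could a priori be several Hurwitz orbits distinguished by the $W_0'$-conjugacy class of the extra reflection. All remaining steps reduce to invoking previously established transitivity results (Theorems \ref{thm:LR2} and \ref{th:maintheorem3}) or to explicit braid-group manipulations.
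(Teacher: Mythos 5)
Your proof is correct and takes essentially the same route as the paper: normalize both factorizations via Theorem \ref{thm:LR2}, match the reduced parts using Theorem \ref{th:maintheorem3}, and identify the two leftover doubled reflections up to $W_0'$-conjugacy via the crystallographic uniqueness clause of Theorem \ref{prop:CharParSub} combined with Lemma \ref{le:Reduction}. The only difference is cosmetic: the paper places the doubled pair at the end of the tuple from the start, so your extra sliding step $(s,s,r)\sim(r,s,s)$ and back is not needed there.
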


\begin{proof}
Let $(t_1, \ldots, t_{n+1}), (r_1, \ldots, r_{n+1}) \in \Fac_{T_0,n+1}(w')$ be arbitrary. By Theorem \ref{thm:LR2} we can assume (up to Hurwitz equivalence) that $t_{n+1}=t_n$ and $r_{n+1}=r_n$. It is $w'=t_1 \cdots t_{n-1}= r_1 \cdots r_{n-1}$ a parabolic quasi-Coxeter element with corresponding parabolic subgroup $W'= \langle t_1, \ldots, t_{n-1} \rangle \leq W_0$. In particular we have 
$$(t_1, \ldots, t_{n-1}) \sim (r_1, \ldots, r_{n-1})$$
by Theorem \ref{th:maintheorem3}, hence  
$$(r_1, \ldots, r_{n-1}, r_{n}, r_{n}) \sim (t_1, \ldots, t_{n-1}, r_{n}, r_{n}).$$
By Theorem \ref{prop:CharParSub} the reflections $r_n$ and $t_n$ are conjugated under $W'$. The assertion follows by Lemma \ref{le:Reduction}.
\end{proof}

\medskip
\noindent Direct calculations yield the following statement.
\begin{Lemma} \label{le:HurwitzDihedral}
Let $\alpha \in \Phi$. Then 
$$(s_{\alpha, 1} , s_{\alpha, 0}) \sim (s_{\alpha, k+1} , s_{\alpha, k}) ~\text{and } (s_{\alpha, 0} , s_{\alpha, 1}) \sim (s_{\alpha, k} , s_{\alpha, k+1}) ~\text{for all } k \in \ZZ.$$
\end{Lemma}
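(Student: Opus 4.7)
The plan is to unravel the Hurwitz move on pairs of reflections that share the same underlying root $\alpha$ using Lemma~\ref{le:AffConj}, and then prove both equivalences by induction on $k$.

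First I would specialise Lemma~\ref{le:AffConj} to $\beta=\alpha$. Since $s_\alpha(\alpha)=-\alpha$, $\frac{2(\alpha\mid\alpha)}{(\alpha\mid\alpha)}=2$, and $s_{-\alpha,-m}=s_{\alpha,m}$, the conjugation formula collapses to
\[
s_{\alpha,k}\,s_{\alpha,l}\,s_{\alpha,k}\;=\;s_{-\alpha,\,l-2k}\;=\;s_{\alpha,\,2k-l}.
\]
Consequently, for any $k,l\in\ZZ$,
\[
\sigma_1(s_{\alpha,k},s_{\alpha,l})=(s_{\alpha,2k-l},\,s_{\alpha,k}),\qquad
\sigma_1^{-1}(s_{\alpha,k},s_{\alpha,l})=(s_{\alpha,l},\,s_{\alpha,2l-k}).
\]

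Next, for the first equivalence, I would observe that plugging $(k+1,k)$ into the formula gives
\[
\sigma_1(s_{\alpha,k+1},s_{\alpha,k})=(s_{\alpha,k+2},s_{\alpha,k+1}),
\]
so the map $(s_{\alpha,k+1},s_{\alpha,k})\mapsto(s_{\alpha,k+2},s_{\alpha,k+1})$ is realised by $\sigma_1$. Starting from $(s_{\alpha,1},s_{\alpha,0})$ and iterating $\sigma_1$ covers all $k\ge 0$ by induction; applying $\sigma_1^{-1}$ yields $\sigma_1^{-1}(s_{\alpha,k+1},s_{\alpha,k})=(s_{\alpha,k},s_{\alpha,k-1})$, and a downward induction then covers $k<0$.

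For the second equivalence I would argue symmetrically: the formula $\sigma_1^{-1}(s_{\alpha,k},s_{\alpha,k+1})=(s_{\alpha,k+1},s_{\alpha,k+2})$ lets $\sigma_1^{-1}$ march the pair $(s_{\alpha,0},s_{\alpha,1})$ upward through all positive $k$, and $\sigma_1$ marches it downward through negative $k$. No step is subtle — Lemma~\ref{le:AffConj} does all the real work — so the only thing that requires care is keeping the sign convention $s_{-\alpha,-m}=s_{\alpha,m}$ straight when identifying $s_{s_\alpha(\alpha),\,l-2k}$ with $s_{\alpha,\,2k-l}$; that identification is the sole non-mechanical point in the argument.
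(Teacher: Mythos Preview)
Your proof is correct and is exactly the ``direct calculation'' the paper alludes to: specialising Lemma~\ref{le:AffConj} to $\beta=\alpha$ gives $s_{\alpha,k}s_{\alpha,l}s_{\alpha,k}=s_{\alpha,2k-l}$, from which the shift-by-one behaviour of $\sigma_1^{\pm 1}$ on pairs $(s_{\alpha,k+1},s_{\alpha,k})$ and $(s_{\alpha,k},s_{\alpha,k+1})$ follows immediately and the induction is clear.
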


\begin{Lemma} \label{le:DualRootBase}
Let $\{ \alpha_1 , \ldots, \alpha_n\} \subseteq \Phi$ such that $W_0= \langle s_{\alpha_1}, \ldots, s_{\alpha_n} \rangle$. Then the set 
$$\{ s_{\alpha_{n-1}} \cdots s_{\alpha_{i+1}} (\alpha_i)^{\vee} \mid 1 \leq i \leq n-1 \} \cup \{ \alpha_n^{\vee} \}$$
is a basis of $L(\Phi^{\vee})$.
\end{Lemma}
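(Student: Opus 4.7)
The plan is to reduce the claim to a change-of-basis argument: I would show that the given set differs from $\{\alpha_1^\vee,\ldots,\alpha_n^\vee\}$ by a matrix in $\GL_n(\ZZ)$, and that $\{\alpha_1^\vee,\ldots,\alpha_n^\vee\}$ is itself a $\ZZ$-basis of $L(\Phi^\vee)$. For the latter, the hypothesis $W_\Phi = \langle s_{\alpha_1},\ldots,s_{\alpha_n}\rangle$ together with Theorem \ref{PropVoigt} (applied with $\Phi' = \Phi$ and $R = \{\alpha_1,\ldots,\alpha_n\}$, using the equivalence of (c) and (d)) yields $L(\Phi^\vee) = L(R^\vee) = \spanz(\alpha_1^\vee,\ldots,\alpha_n^\vee)$. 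Since $L(\Phi^\vee)$ is a free $\ZZ$-module of rank $n$ generated by $n$ elements, those elements automatically form a $\ZZ$-basis.

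Next I would refine Lemma \ref{le:LinIndCoroots} by tracking the leading term. The input is the identity $s_\gamma(\alpha)^\vee = \alpha^\vee - \langle \gamma, \alpha^\vee\rangle\gamma^\vee$ with the Cartan integer $\langle \gamma, \alpha^\vee\rangle \in \ZZ$. An induction on the number of reflections yields, for each $i \in \{1,\ldots,n-1\}$,
$$s_{\alpha_{n-1}} \cdots s_{\alpha_{i+1}}(\alpha_i)^\vee = \alpha_i^\vee + \sum_{j=i+1}^{n-1} c_{ij}\,\alpha_j^\vee, \quad c_{ij} \in \ZZ.$$
Here the coefficient of $\alpha_i^\vee$ stays equal to $1$ along the whole chain, because each successive reflection $s_{\alpha_k}$ with $k > i$ changes a vector only by an integer multiple of $\alpha_k^\vee$ and leaves earlier coefficients untouched.

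Finally, labelling the candidate set as $v_i := s_{\alpha_{n-1}} \cdots s_{\alpha_{i+1}}(\alpha_i)^\vee$ for $1 \leq i \leq n-1$ and $v_n := \alpha_n^\vee$, the coordinates of $(v_1,\ldots,v_n)$ with respect to the basis $(\alpha_1^\vee,\ldots,\alpha_n^\vee)$ assemble into a lower-triangular integer matrix with ones on the diagonal. Such a matrix lies in $\GL_n(\ZZ)$, so $\{v_1,\ldots,v_n\}$ is again a $\ZZ$-basis of $L(\Phi^\vee)$, which is the claim. I don't foresee a genuine obstacle: the only subtlety is the bookkeeping that the leading coefficient $1$ is preserved under the chain of reflections, and this follows at once from writing out $s_{\alpha_k}(\alpha_i^\vee) = \alpha_i^\vee - \langle \alpha_k, \alpha_i^\vee\rangle\alpha_k^\vee$ and iterating.
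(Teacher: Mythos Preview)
Your argument is correct. You first invoke Theorem~\ref{PropVoigt} to see that $\{\alpha_1^\vee,\ldots,\alpha_n^\vee\}$ is a $\ZZ$-basis of $L(\Phi^\vee)$, and then carry out an explicit change of basis: since $s_{\alpha_k}(v)=v-(v\mid\alpha_k)\alpha_k^\vee$ and the $\alpha_j^\vee$ are linearly independent, applying $s_{\alpha_k}$ alters only the $\alpha_k^\vee$-coordinate, so the transition matrix is unitriangular over $\ZZ$ and hence unimodular. (The triangularity direction depends on whether you put the $v_i$ in rows or columns; either way the determinant is $1$.)

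The paper takes a different, shorter route. Rather than computing the transition matrix, it observes that the reflections attached to the transformed roots still generate $W_\Phi$: indeed $s_{s_{\alpha_{n-1}}\cdots s_{\alpha_{i+1}}(\alpha_i)}$ is conjugate to $s_{\alpha_i}$ by an element of $\langle s_{\alpha_{i+1}},\ldots,s_{\alpha_{n-1}}\rangle$, so one recovers the original generators one at a time. Then Theorem~\ref{PropVoigt} is applied \emph{directly} to the new set $R'=\{s_{\alpha_{n-1}}\cdots s_{\alpha_{i+1}}(\alpha_i)\mid 1\le i\le n-1\}\cup\{\alpha_n\}$ to conclude $L((R')^\vee)=L(\Phi^\vee)$. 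Your approach trades this conceptual step for a concrete matrix computation; it is a bit longer but entirely self-contained and avoids having to check that the conjugated reflections regenerate $W_\Phi$.
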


\begin{proof}
We have $W_0= \langle s_{s_{\alpha_{n-1}} \cdots s_{\alpha_2}(\alpha_1)}, \ldots ,s_{s_{\alpha_{n-1}}(\alpha_{n-2})}, s_{\alpha_{n-1}}, s_{\alpha_n} \rangle$ and therefore the assertion is a direct consequence of Theorem \ref{PropVoigt}.
\end{proof}

\begin{Lemma} \label{le:Fibre}
Let $w$ be a quasi-Coxeter element for $(W_a,T_a)$. Then $\langle \sigma_{n} \rangle \subseteq \mathcal{B}_{n+1}$ acts transitively on the fibre $\pi^{-1}(p(\underline{r}))$ for each $\underline{r}=(s_{\gamma_1, \ell_1}, \ldots, s_{\gamma_{n-1}, \ell_{n-1}}, s_{\gamma_{n}, \ell_{n}}, s_{\gamma_{n}, \ell_{n+1}}) \in \Red_{T_a}(w)$.
\end{Lemma}

\begin{proof}
Fix $\underline{r}=(s_{\gamma_1, \ell_1}, \ldots, s_{\gamma_{n-1}, \ell_{n-1}}, s_{\gamma_{n}, \ell_{n}}, s_{\gamma_{n}, \ell_{n+1}}) \in \Red_{T_a}(w)$. Clearly an element in the fibre has to be of the form 
$$
(s_{\gamma_1, m_1}, \ldots, s_{\gamma_{n-1}, m_{n-1}}, s_{\gamma_{n}, m_{n}}, s_{\gamma_{n}, m_{n+1}}) \in \Red_{T_a}(w)
$$
for integers $m_1, \ldots, m_{n+1} \in \ZZ$. Considering the normal form corresponding to this reflection decomposition and the normal form corresponding to the reflection decomposition $\underline{r}$, we have equality of the translation parts. By the same calculations as in the proof of Proposition \ref{prop:WellDefined} we obtain
$$
\sum_{i=1}^{n-1} -l_i s_{\gamma_{n-1}} \cdots s_{\gamma_{i+1}}(\gamma_i)^{\vee} + (\ell_{n}-\ell_{n+1}) \gamma_{n}^{\vee} = \sum_{i=1}^{n-1} -m_i s_{\gamma_{n-1}} \cdots s_{\gamma_{i+1}}(\gamma_i)^{\vee} + (m_{n}-m_{n+1}) \gamma_{n}^{\vee}.
$$
By Proposition \ref{prop:WellDefined} we have $W_0= \langle s_{\gamma_1}, \ldots, s_{\gamma_n} \rangle$. The roots $\gamma_1^{\vee}, \ldots, \gamma_{n-1}^{\vee}, \gamma_{n}^{\vee}$ are a basis of $L(\Phi^{\vee})$ by Theorem \ref{PropVoigt}. Therefore by Lemma \ref{le:DualRootBase} the set 
$$\{ s_{\gamma_{n-1}} \cdots s_{\gamma_{2}}(\gamma_1)^{\vee}, \ldots, s_{\gamma_{n-1}}(\gamma_{n-2})^{\vee}, \gamma_{n-1}^{\vee}, \gamma_{n}^{\vee} \}$$ 
is another basis. This yields that $m_i=\ell_i$ for all $i \in \{1, \ldots, n-1\}$ and $m_{n}-m_{n+1} = \ell_{n}-\ell_{n+1}$. Furthermore Lemma \ref{le:Stefan} yields $m_{n}-m_{n+1} = \ell_{n}-\ell_{n+1}= \pm 1$. By these properties and by Lemma \ref{le:HurwitzDihedral} we conclude that $\langle \sigma_{n} \rangle \subseteq \mathcal{B}_{n+1}$ acts transitively on $\pi^{-1}(\underline{r})$. 
\end{proof}



\begin{remark}
In fact, the fibre that we considered in Lemma \ref{le:Fibre} can be described completely in terms of the translation part of $s_{\gamma_{n}, \ell_{n}}$ and $s_{\gamma_{n}, \ell_{n+1}}$ (resp. its coefficients $\ell_{n}$ and $\ell_{n+1}$). For $\ell_{n}-\ell_{n+1}=1$, we have 
$$\ldots \sim (-2,-1) \sim (-1, 0) \sim (0,1) \sim (1,2) \sim (2,3) \sim \ldots $$
For $\ell_{n}-\ell_{n+1}=-1$, we have 
$$\ldots \sim (-1,-2) \sim (0, -1) \sim (1,0) \sim (2,1) \sim (3,2) \sim \ldots $$
\end{remark}

\bigskip
\noindent We are finally in the position to prove our main result.
\begin{proof}[Proof of Theorem~\ref{th:themaintheorem1}]
The reduction to the irreducible case is immediate (see also \cite[Lemma 8.1]{BGRW17}). Therefore we proceed with our fixed affine dual Coxeter system $(W_a,T_a)$ of rank $n+1$. Let $w \in W_a$ be a quasi-Coxeter element and fix a reduced reflection decomposition 
$$(s_{\gamma_1, \ell_1}, \ldots, s_{\gamma_{n-1}, \ell_{n-1}}, s_{\gamma_{n}, \ell_{n}}, s_{\gamma_{n}, \ell_{n+1}}) \in \Red_{T_a}(w)$$
which is obtained as in equation (\ref{equ:AffineQuasi}) in Section \ref{sec:QuasiCoxElInAffine}. Let $(t_1, \ldots , t_{n+1}) \in \Red_{T_a}(w)$ be arbitrary. By Theorem \ref{thm:HurwitzOnFac} and Proposition \ref{prop:equi} there exists a braid $\sigma \in \mathcal{B}_{n+1}$ such that 
$$\pi(\sigma(t_1, \ldots , t_{n+1})) = \pi(s_{\gamma_1, \ell_1}, \ldots, s_{\gamma_{n-1}, \ell_{n-1}}, s_{\gamma_{n}, \ell_{n}}, s_{\gamma_{n}, \ell_{n+1}}).$$
Hence $\sigma(t_1, \ldots , t_n)$ and $(s_{\gamma_1, \ell_1}, \ldots, s_{\gamma_{n-1}, \ell_{n-1}}, s_{\gamma_{n}, \ell_{n}}, s_{\gamma_{n}, \ell_{n+1}})$ are in the same fibre and the assertion follows by Lemma \ref{le:Fibre}.

If $w \in W$ is a parabolic quasi-Coxeter element, but not a quasi-Coxeter element, then the assertion follows by Theorem \ref{th:maintheorem3} and Theorem \ref{th:maintheorem2} since all proper parabolic subgroups of $(W_a,T_a)$ are finite.
\end{proof}
 
\medskip
\begin{remark} \label{rem:end}
Let $(W,T)$ be a dual Coxeter system of rank $n$. By \cite[Proposition 2.1]{BGRW17} a quasi-Coxeter element has to be at least of absolute length $n$. If $(W,T)$ is finite, then by \cite[Lemma 3]{Carter} the absolute length is bounded by $n$. Hence it is canonical to demand a quasi-Coxeter element to be of absolute length $n$. If $(W,T)$ is not finite, then the absolute length is in general not bounded by above (see \cite{Dus12}), except for the case where $(W,T)$ is affine. In that case it is bounded by $2(n-1)$ (see \cite{McCP11}). Therefore it makes sense to ask whether one can extend the definition of quasi-Coxeter element to elements of absolute length greater than $n$. Namely it might make sense to define an element $w \in W$ to be quasi-Coxeter if there exists $(t_1, \ldots, t_m) \in \Red_T(w)$ with $m\geq n$ such that $W= \langle t_1, \ldots, t_m \rangle$ and the Hurwitz action is transitive on $\Red_T(w)$. For $(W,T)$ finite this definition is equivalent to Definition \ref{def:coxeter} (b). Using this more general definiton we did not find a quasi-Coxeter element such that the Hurwitz action on $\Red_T(w)$ is not transitive. The following example provides a quasi-Coxeter element for this more general definiton with $m>n$ and with transitive Hurwitz action. It was proposed by Thomas Gobet. 
\end{remark}

\begin{example} \label{ex:Thomas}
Let $(W,T)$ be affine of type $\widetilde{A}_2$. We choose a simple system $S \subseteq T$ such that $S= \{ s_{\alpha_1}, s_{\alpha_2}, s_{\widetilde{\alpha}, 1} \}$, where $\alpha_1, \alpha_2$ are simple roots for the corresponding root system of type $A_2$ and $\widetilde{\alpha}=\alpha_1+\alpha_2$ is the highest root. We consider the element $w:=(s_{\alpha_1}s_{\alpha_2}s_{\widetilde{\alpha},1})^2$. Since this is the power of a Coxeter element, we have $\ell_{S}(w)=6$ by \cite{Spe09}, where $\ell_S(w)$ denotes the length of $w$ with respect to the generating set $S$. Using the criterion given by Dyer in \cite[Theorem 1.1]{Dye01}, we see that $\ell_{T}(w)=4$. Note that $4$ is precisely the upper bound for the absolute length in $\widetilde{A}_2$. A reduced reflection decomposition is given by 
$$
w= s_{\widetilde{\alpha}} s_{\alpha_2, 1} s_{\alpha_2} s_{\widetilde{\alpha}, 1}
$$ 
and 
$$(s_{\widetilde{\alpha}}, s_{\alpha_2, 1}, s_{\alpha_2}, s_{\widetilde{\alpha}, 1}) \sim (s_{\widetilde{\alpha}, 1}, s_{\widetilde{\alpha}}, s_{\alpha_2, 1}, s_{\alpha_2}).$$
Therefore we see that $w= \TR(\alpha_1 + 2 \alpha_2)$. By Theorem \ref{thm:LR2} a reduced reflection decomposition of $w$ (up to Hurwitz equivalence) is given by $w=s_{\alpha, k_1} s_{\alpha, \ell_1} s_{\beta, k_2} s_{\beta, \ell_2}$ with $\alpha, \beta \in \{ \alpha_1, \alpha_2, \widetilde{\alpha}\}$. Note that $\alpha \neq \beta$ since otherwise $s_{\alpha, k_1} s_{\alpha, \ell_1} s_{\beta, k_2} \in T$ by Lemma \ref{le:AffineFac}, which would contradict $\ell_{T}(w)=4$. We have
\begin{align*}
(s_{\alpha_1}, s_{\alpha_1}, s_{\alpha_2}, s_{\alpha_2}) \stackrel{\sigma_2 \sigma_1 \sigma_3 \sigma_2}{\sim} (s_{\alpha_2}, s_{\alpha_2}, s_{\alpha_1}, s_{\alpha_1})
\end{align*}
and 
\begin{align*}
(s_{\alpha_1}, s_{\alpha_1}, s_{\alpha_2}, s_{\alpha_2}) \stackrel{}{\sim} (s_{\alpha_2}, s_{\alpha_1}, s_{\alpha_1}, s_{\alpha_2 }) \sim (s_{\alpha_2}, s_{\alpha_2}, s_{\widetilde{\alpha}}, s_{\widetilde{\alpha}} ) \stackrel{\sigma_2 \sigma_1 \sigma_3 \sigma_2}{\sim} (s_{\widetilde{\alpha}}, s_{\widetilde{\alpha}}, s_{\alpha_2}, s_{\alpha_2})
\end{align*}
and
\begin{align*}
(s_{\alpha_1}, s_{\alpha_1}, s_{\alpha_2}, s_{\alpha_2}) \stackrel{}{\sim} (s_{\alpha_1}, s_{\alpha_2}, s_{\alpha_2}, s_{\alpha_1 }) \sim (s_{\widetilde{\alpha}}, s_{\widetilde{\alpha}}, s_{\alpha_1}, s_{\alpha_1} ) \stackrel{\sigma_2 \sigma_1 \sigma_3 \sigma_2}{\sim} (s_{\alpha_1}, s_{\alpha_1}, s_{\widetilde{\alpha}}, s_{\widetilde{\alpha}}).
\end{align*}
By comparison of coefficients and Lemma \ref{le:HurwitzDihedral} we obtain Hurwitz transitivity on $\Red_{T}(w)$ by using similar arguments as before in this section. 
\end{example}

  \end{document}